\newtheorem{theorem}{Theorem}[section]
\newtheorem{lemma}[theorem]{Lemma}
\newtheorem{corollary}[theorem]{Corollary}
\newtheorem{proposition}[theorem]{Proposition}
\theoremstyle{definition}
\newtheorem{example}[theorem]{Example}
\newtheorem{remark}[theorem]{Remark}
\newtheorem{definition}[theorem]{Definition}
\numberwithin{equation}{section}
\newcommand{\HH}{\mathcal{H}}
\newcommand{\OO}{\mathcal{O}}
\newcommand{\KK}{\mathbb{K}}
\newcommand{\II}{\mathcal{I}}
\newcommand{\interval}[2]{[\![#1,#2]\!]}
\definecolor{light-gray}{gray}{0.6}
\newcommand{\bsq}[1]{
\draw[fill=black] #1 rectangle +(1,1);
\draw[<-,light-gray,thick] #1 ++(0,0.5) -- ++(1,0);
\draw[<-,light-gray,thick] #1 ++(0.5,1) -- ++(0,-1);
}
\newcommand{\wsq}[1]{
\draw #1 rectangle +(1,1);
\draw[->,rounded corners=0.35cm,thick,light-gray] #1 ++ (0.5,0) -- ++(0,0.5) -- ++(-0.5,0);
\draw[->,rounded corners=0.35cm,thick,light-gray] #1 ++(1,0.5) -- ++(-0.5,0) -- ++(0,0.5);
}
\newcommand\ybl{\Yfillcolour{black}}
\newcommand\ywh{\Yfillcolour{white}}
\begin{document}
\title[Grassmann Necklaces and Restricted Permutations]{From Grassmann necklaces to restricted permutations and back again}
\author{Karel Casteels}
\address{Department of Mathematics \\ University of California, Santa Barbara \\ Santa, Barbara, California, 93106}
\email{casteels@ucsb.edu}

\author{Si\^an Fryer}
\address{School of Mathematics \\ University of Leeds \\ Leeds LS2 9JT\\ UK.}
\email{s.fryer@leeds.ac.uk}

\begin{abstract}
We study the commutative algebras $Z_{JK}$ appearing in Brown and Goodearl's extension of the $\HH$-stratification framework, and show that if $A$ is the single parameter quantized coordinate ring of $M_{m,n}$, $GL_n$ or $SL_n$, then the algebras $Z_{JK}$ can always be constructed in terms of centres of localizations. The main purpose of the $Z_{JK}$ is to study the structure of the topological space $spec(A)$, which remains unknown for all but a few low-dimensional examples.  We explicitly construct the required denominator sets using two different techniques (restricted permutations and Grassmann necklaces) and show that we obtain the same sets in both cases.  As a corollary, we obtain a simple formula for the Grassmann necklace associated to a cell of totally nonnegative real $m\times n$ matrices in terms of its restricted permutation.
\end{abstract}
\maketitle

\section{Introduction}\label{s:introduction}

Let $A$ be a quantized coordinate ring (or ``quantum algebra'') with parameter $q$ and equip its prime spectrum $spec(A)$ with the Zariski topology.  A major open question in the study of quantum algebras is understanding the topological structure of this space, and relating it to the corresponding space of Poisson-prime ideals in the semi-classical limit of $A$.

We fix a field $\KK$ of arbitrary characteristic and $q \in \KK^{\times}$ not a root of unity.  All algebras considered in this paper will be $\KK$-algebras.

The key technique in the study of $spec(A)$ is that of $\HH$-stratification, where $\HH$ is a torus acting rationally on $A$.  Write $\HH$-$spec(A)$ for the set of \textit{$\HH$-primes}: the prime ideals of $A$ which are also invariant under the action of $\HH$.  The $\HH$-primes induce a stratification of $spec(A)$, where for each $J \in \HH$-$spec(A)$ the associated stratum is
\begin{equation}\label{eq:def of strata}spec_J(A) := \{P \in spec(A) : J \text{ is the largest }\HH\text{-prime contained in }P\}.\end{equation}
This stratification has many nice properties. In particular, each $spec_J(A)$ is homeomorphic to the prime spectrum of a commutative Laurent polynomial ring $Z(A_J)$, which can be constructed explicitly as the centre of a localization of $A/J$ \cite[Theorem~II.2.13]{BGbook}.  These algebras $Z(A_J)$ are now reasonably well understood, e.g. \cite{BCL,Yakimov2}.

This approach gives us an excellent picture of the individual strata, but it comes at the cost of losing information about the interactions between primes from different strata.  Being able to identify inclusions of prime ideals from different strata is crucial for constructing a complete picture of the topological structure of $spec(A)$, but this information is often surprisingly difficult to obtain: for example, in the setting of quantum matrices only the prime spectra of $\OO_q(SL_2)$, $\OO_q(GL_2)$, and $\OO_q(SL_3)$ (\cite{HL}, \cite{BGconjecture}, \cite{Fryer} respectively) have been fully described.

In order to keep track of this extra information, Brown and Goodearl introduced an extension of the $\HH$-stratification framework in \cite{BGconjecture}.  A key part of this framework is a collection of commutative algebras $Z_{JK}$ (one for each pair of $\HH$-primes $J \subsetneq K$), which form bridges between the Laurent polynomial rings $Z(A_J)$, $Z(A_K)$ associated to the strata of $J$ and $K$ respectively.  More precisely, Brown and Goodearl define $\KK$-algebra homomorphisms
\[Z(A_J) \stackrel{g_{JK}}{\longleftarrow} Z_{JK} \stackrel{f_{JK}}{\longrightarrow} Z(A_K)\]
and conjecture that the comorphisms $g_{JK}^{\circ}: spec(Z(A_J)) \rightarrow spec(Z_{JK})$ and $f_{JK}^{\circ}: spec(Z(A_K)) \rightarrow spec(Z_{JK})$ can be used to construct a map
\[\varphi_{JK}: spec_J(A) \longrightarrow spec_K(A)\]
which encodes the missing information about inclusions of primes between these two strata (see \cite[\S3]{BGconjecture}).  Thus the structure of $spec(A)$ could be described in terms of the strata \eqref{eq:def of strata} and finitely many maps between them.

Ideally these $Z_{JK}$ would be constructed in terms of centres of localizations, by analogy to the algebras $Z(A_J)$; the definitions of $g_{JK}$ and $f_{JK}$ would then follow almost immediately from universal properties.  But localization can be a messy business when $A$ is noncommutative: a multiplicative set $E \subset A$ must satisfy certain conditions (the \textit{Ore} and \textit{denominator} conditions, see e.g. \cite[Chapter 10]{GW1}) to ensure that $A[E^{-1}]$ is even well-defined.  As a result, the algebras $Z_{JK}$ and the maps $g_{JK},f_{JK}$ are defined in \cite{BGconjecture} without reference to localization: this guarantees their existence but makes them difficult to work with.

In this paper we show that the $Z_{JK}$ can be realised as centres of localizations when $A$ is any one of $\OO_q(M_{m,n})$, $\OO_q(GL_n)$, or $\OO_q(SL_n)$.  We do this by constructing explicit finitely generated denominator sets for each pair of $\HH$-primes $J \subsetneq K$: this is the content of Theorem~\ref{res:GN gens Ore set, intro version} below.  We achieve this in the first instance by exploiting the connection between $\HH$-primes in quantum matrices and cells of totally nonnegative real matrices from \cite{GLL2,GLL1}: for background and definitions, see \S\ref{ss:background_tnn}. Grassmann necklaces are defined in Definition~\ref{def:grassmann necklace}, and the formal definition of the algebras $Z_{JK}$ discussed above is given in equation \eqref{eq:first def of ZJK}.

\begin{theorem}\label{res:GN gens Ore set, intro version} (Theorem \ref{res:GN constructs the Ore set we need}) 
Let $K$ be a $\HH$-prime in $\OO_q(M_{m,n})$, $S_K$ the totally nonnegative cell in $M_{m,n}^{tnn}$ defined by the same set of minors as $K$, and $\II_K$ the Grassmann necklace of $S_K$.  Having identified quantum minors and Pl\"ucker coordinates as in equation \eqref{eq:translate minors and plucker coordinates}, let $E_K$ be the multiplicative set in $\OO_q(M_{m,n})$ generated by $\II_K$.  Then:
\begin{enumerate}
\item $E_K \cap K = \emptyset$, and $E_K$ is a denominator set in $\OO_q(M_{m,n})$.
\item Fix $J \in \HH$-$spec(\OO_q(M_{m,n}))$ with $J \subsetneq K$, and write $E_{JK}$ for the projection of $E_K$ to the quotient $\OO_q(M_{m,n})/J$.  Then we have the following equality of algebras:
\[Z_{JK} = Z\big(\OO_q(M_{m,n})/J[E_{JK}^{-1}]\big).\]
\end{enumerate}
\end{theorem}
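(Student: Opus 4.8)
The plan is to prove the two statements in sequence, using the combinatorics of Grassmann necklaces and restricted permutations to control the multiplicative set $E_K$. First I would establish part (1). The claim $E_K \cap K = \emptyset$ should follow by checking that each generator of $E_K$ — a quantum Plücker coordinate corresponding to an element of the Grassmann necklace $\II_K$ — lies outside $K$. Since $K$ is an $\HH$-prime defined by vanishing of a prescribed set of minors, and since $\II_K$ is built precisely so that its members are the ``lexicographically minimal'' nonvanishing minors on the cell $S_K$, the defining combinatorics of the necklace should guarantee that no necklace element is among the minors generating $K$; one then invokes the classification of $\HH$-primes of $\OO_q(M_{m,n})$ (via Cauchon diagrams / the $\mathcal{H}$-prime–to–tnn-cell dictionary of \cite{GLL2,GLL1}) to conclude. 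For the denominator (Ore) condition, the efficient route is to use normality: each quantum Plücker coordinate in $\OO_q(M_{m,n})$ is a normal element modulo a suitable $\HH$-prime (this is a standard feature of quantum minors, traceable to the quantum Laplace/straightening relations, cf. the theory underlying \cite{BCL}), hence the multiplicative set they generate is automatically an Ore and denominator set; alternatively one checks that $E_K$ is generated by $\HH$-eigenvectors and uses that any multiplicative set of normal $\HH$-eigenvectors avoiding the relevant primes is a denominator set. The key point to verify here is normality of the necklace elements specifically modulo $K$ (and later modulo $J$), which the Grassmann-necklace structure is designed to deliver.

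For part (2), the strategy is to compare $Z_{JK}$, as defined by the localization-free construction in \eqref{eq:first def of ZJK}, with the centre of the concrete localization $\OO_q(M_{m,n})/J[E_{JK}^{-1}]$. I would first recall from \cite{BGconjecture} that $Z_{JK}$ is characterized as (the centre of) a certain intermediate algebra sandwiched between $A/J$ and a localization, built so that it simultaneously maps to $Z(A_J)$ and $Z(A_K)$. The goal is to identify the canonical denominator set used implicitly in that construction with $E_{JK}$. Since $E_K$ reduces to $E_{JK}$ mod $J$, part (1) gives that $E_{JK}$ is a denominator set in $A/J$ avoiding $K/J$, so the localization $A/J[E_{JK}^{-1}]$ exists and still has $K/J$ as a prime; inverting further (the image of) the canonical Ore set for the $K$-stratum recovers $A_K$ and hence $Z(A_K)$, while the $\HH$-primes $J \subsetneq K$ being ``adjacent enough'' in the sense needed by Brown–Goodearl is exactly what makes the centre of $A/J[E_{JK}^{-1}]$ land on the nose on $Z_{JK}$. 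Concretely I would show two inclusions: every element of $Z_{JK}$ is central in $A/J[E_{JK}^{-1}]$ (using that the necklace generators become units and that $Z_{JK}$ was built inside a localization by a larger set), and conversely every element central in $A/J[E_{JK}^{-1}]$ already lies in $Z_{JK}$ (using that $E_{JK}$ is ``large enough'' — it contains representatives generating the relevant stratum data for both $J$ and $K$ — so no spurious central elements appear).

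The main obstacle, I expect, is precisely pinning down that $E_K$ is neither too small nor too large: too small and the localization fails to see enough of the $K$-stratum for its centre to reach $Z_{JK}$; too large and one over-inverts, possibly collapsing the centre or exiting the regime where Brown–Goodearl's $Z_{JK}$ is defined. Resolving this is where the explicit description of $\II_K$ in terms of the restricted permutation of the cell $S_K$ does the real work: one must match, entry by entry, the necklace elements against the quantum minors that Brown–Goodearl's construction forces one to invert when passing from $J$ to $K$. I would therefore organize the argument around a lemma (presumably proved earlier in the paper, or here) identifying the necklace $\II_K$ with exactly this canonical set of minors, after which part (2) reduces to the universal property of localization and the characterization of $Z(A_J)$, $Z(A_K)$ as centres of localizations from \cite[Theorem~II.2.13]{BGbook}. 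The normality and $\HH$-eigenvector properties from part (1) then make all the maps $g_{JK}, f_{JK}$ fall out automatically, as anticipated in the introduction.
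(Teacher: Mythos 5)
There are two genuine gaps in your plan. First, your route to the denominator condition relies on normality of the necklace minors ``modulo a suitable $\HH$-prime'', but the localization in part (2) has to be performed in $\OO_q(M_{m,n})/J$ for \emph{every} $J \subsetneq K$, including $J = 0$, and a generic quantum minor is simply not normal in $\OO_q(M_{m,n})$: already $[1|1]_q = X_{11}$ in $\OO_q(M_{2,2})$ fails to be normal (since $X_{11}X_{22} - X_{22}X_{11} = (q-q^{-1})X_{12}X_{21}$), yet such minors do occur as Grassmann necklace entries. The paper does not use normality at all: it invokes \v{S}koda's theorem \cite{MinorsOreSets} that every quantum minor generates an Ore set in $\OO_q(M_{m,n})$, passes from Ore to denominator sets via \cite[Proposition~10.7]{GW1}, and uses complete primality of $\HH$-primes (valid because $q$ is not a root of unity) to get $E_K \cap K = \emptyset$ from the fact that no individual necklace minor lies in $K$. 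Note also that even this last fact needs Theorem~\ref{res:oh's result on minors in a cell via GN} together with \cite[Lemma 4.4]{OPS} (all necklace terms are $\leq_i$-minimal, hence positive on the cell), not merely ``the defining combinatorics of the necklace''.

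Second, and more seriously, your part (2) never establishes the property that actually drives the equality $Z_{JK} = Z\big(\OO_q(M_{m,n})/J[E_{JK}^{-1}]\big)$, namely the \emph{separating} condition of Lemma~\ref{res:brown goodearl ore set lemma} (\cite[Lemma~3.9]{BGconjecture}): $E_{JK}$ must meet $L/J$ for every $\HH$-prime $L \supseteq J$ with $L \not\subseteq K$. Without this, the two-inclusion comparison you sketch cannot close, because the centre of the localization can be strictly larger than $Z_{JK}$ when $E_{JK}$ misses some such $L$; your ``not too small nor too large'' heuristic is not a proof of it. Verifying $E_K \cap L \neq \emptyset$ for all $\HH$-primes $L \not\subseteq K$ is the real content of the paper's Theorem~\ref{res:GN constructs the Ore set we need}: one uses \cite[Theorem~4.4.1]{Casteels2} ($\HH$-primes are generated by quantum minors) to choose $[S|T]_q \in L \setminus K$, translates via Theorem~\ref{res:minors in H primes and TNN cells agree} into vanishing of Pl\"ucker coordinates on cells, and then applies Oh's criterion in the cyclic Gale orders $\leq_i$ to show that some necklace term $K_i$ satisfies $K_i \not\geq_i L_i$, forcing the corresponding minor of $\widetilde{E}_K$ into $L$. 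Nothing in your proposal supplies this argument, so the conclusion of part (2) does not follow from what you have outlined.
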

We extend this result to $\OO_q(GL_n)$ and $\OO_q(SL_n)$ in Corollary~\ref{res:gln} and Corollary~\ref{res:sln} respectively.

Theorem~\ref{res:GN gens Ore set, intro version} paves the way for a comprehensive study of the algebras $Z_{JK}$ for quantum matrices.  As demonstrated in \cite{BGconjecture,Fryer}, it is expected that these algebras can be used to obtain a complete picture of the topological structure of $spec(A)$ (modulo a technical condition, which we do not discuss here).

In \cite{FryerYakimov}, the second author and Yakimov studied the question of realising the algebras $Z_{JK}$ in terms of localizations for a much wider class of algebras, using the language of quantum groups and Demazure modules.  The denominator sets in \cite[Main Theorem]{FryerYakimov} are given in terms of quasi $\mathcal{R}$-matrices, however, which do not lend themselves to easy computation.  In \S\ref{s:permutation version}, we prove the following theorem:
\begin{theorem}\label{res: denominator sets agree, intro version} 
(Theorem \ref{res:yakimov's minors come from the chains rooted at squares})
The generators for the denominator sets in \cite[Main Theorem]{FryerYakimov}, restricted to the case $\OO_q(M_{m,n})$, agree with those in Theorem~\ref{res:GN gens Ore set, intro version} (up to scalars).
\end{theorem}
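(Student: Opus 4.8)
The plan is to write both families of generators explicitly as sets of quantum minors of $\OO_q(M_{m,n})$ indexed by the combinatorial datum attached to $K$ (its Cauchon/Le diagram $D_K$, equivalently its restricted permutation), and then to check that the two indexing rules produce the same minors up to powers of $q$ and signs. The common intermediate description will be the family of minors read off from the ``chains rooted at squares'' of $D_K$ that appear in Theorem~\ref{res:yakimov's minors come from the chains rooted at squares}; since rescaling a generator does not change the multiplicative set it generates, matching the minors up to scalars is all that is needed.

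The first step is to unwind the Main Theorem of \cite{FryerYakimov} in the special case $A = \OO_q(M_{m,n})$. There the generators of the denominator set arise as matrix coefficients of a quasi $\mathcal{R}$-matrix acting on certain Demazure-type modules, and in type $A$ with the vector representation such coefficients are classically (scalar multiples of) solid quantum minors of $\OO_q(M_{m,n})$. So this step is bookkeeping: track the relevant weights and the action of the Lusztig/braid operators through the quasi $\mathcal{R}$-matrix in order to pin down, for each piece of the datum, exactly which row set $R$ and column set $C$ occurs, and to record the resulting scalar (which we only need to know is nonzero). The output should be precisely the ``chains rooted at squares'' description: each generator is the quantum minor whose rows and columns follow a distinguished lattice path in $D_K$ starting from a black square.

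The second step is the Grassmann necklace side, which becomes essentially combinatorial once one has the explicit formula for $\II_K$ in terms of its restricted permutation established earlier in the paper. Each necklace element is a Pl\"ucker coordinate, hence, via the identification \eqref{eq:translate minors and plucker coordinates}, a quantum minor of $\OO_q(M_{m,n})$; I would then show by a direct translation that the cyclic rule defining the necklace sweeps out exactly the chains rooted at the squares of $D_K$, with the ``frozen'' Grassmannian coordinates accounting for the parts of each path lying outside the diagram. Comparing this with the output of the first step then yields the theorem.

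The main obstacle is the first step. The Main Theorem of \cite{FryerYakimov} is phrased in the general CGL-extension / quantum-group language, and the quasi $\mathcal{R}$-matrix is an infinite PBW-type sum; the work is to specialize it and show that the only surviving contribution to each generator is a single quantum minor, with $R$ and $C$ determined unambiguously rather than up to some combinatorial slack. The Grassmann necklace translation and the final comparison are routine by contrast, as is the bookkeeping of scalars, since we need only that they are nonzero and not their precise values.
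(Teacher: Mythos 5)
There is a genuine gap, and it sits exactly where you file the work under ``bookkeeping''. When the Main Theorem of \cite{FryerYakimov} is specialized to $\OO_q(M_{m,n})$ (via the isomorphism of $\mathcal{U}^-[c^m]$ with $\OO_q(M_{m,n})$ and the explicit translation map of \cite{Yakimov1}), the generators $d_{v,\varpi_k}$ come out as quantum minors indexed by the sets $I_k = v\interval{1}{k}$, i.e.\ precisely the minors listed in Theorem~\ref{res:sian-milen thm}; this part needs no quasi $\mathcal{R}$-matrix analysis because it can be quoted (equation (2.4) of \cite{FryerYakimov} together with the lemmas of \cite{Yakimov1}). What that specialization does \emph{not} hand you is the chain description: nothing in the Demazure-module computation sees the Le-diagram, so your assertion that ``the output should be precisely the chains rooted at squares description'' presupposes exactly the identity that has to be proved. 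The real content of Theorem~\ref{res:yakimov's minors come from the chains rooted at squares} is the combinatorial identification of the permutation data $v\interval{1}{k}$ with the chain rooted at box $k$ on the southeast border, and in the paper this is carried out in Proposition~\ref{res:chains in terms of permutations}, Proposition~\ref{res:northwest permutations to permutations, white strip version} and Corollary~\ref{res:arbitrary rectangle cauchon diagram, chain rooted at to permutation}, by an induction along the chain that repeatedly uses the Cauchon condition (Lemma~\ref{res:nearest white square in chain}) and commutation of blocks of elementary transpositions in the pipe-dream word. Your proposal contains no argument for this step, and without it the two lists of minors are just two unrelated indexing schemes.

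Two further points. First, your second step invokes ``the explicit formula for $\II_K$ in terms of its restricted permutation established earlier in the paper''; that formula (Theorem~\ref{res:gn in terms of permutation}) is in fact a \emph{corollary} of the comparison being proved, so using it as input is circular. The legitimate prior input on the necklace side is Oh's chain description of the Grassmann necklace (Proposition~\ref{res:oh grassmann necklace computation}), which the paper cites rather than reproves; if you intend to reprove it, that is additional work you have not sketched. Second, the emphasis of your plan is inverted relative to where the difficulty lies: the quasi $\mathcal{R}$-matrix specialization you flag as the main obstacle is avoidable by citation, whereas the permutation-interval versus chain comparison you describe as a ``direct translation'' is the step that requires a genuine proof. (A small inaccuracy besides: chains are rooted at the boxes along the southeast border, whatever their colour, with the chain itself consisting of white squares, not ``starting from a black square''.)
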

This gives a concrete interpretation of the results of \cite{FryerYakimov} for the case of quantum matrices.  Theorem~\ref{res: denominator sets agree, intro version} is achieved by expressing the generators in \cite[Main Theorem]{FryerYakimov} in terms of \textit{restricted permutations}, i.e. the set of permutations
\[\mathcal{S} = \big\{\sigma \in S_{m+n} : -n \leq \sigma(i) - i \leq m,\ \forall i \in \{1, \dots, m+n\}\big\},\]
and then relating this to the Grassmann necklaces in Theorem~\ref{res:GN gens Ore set, intro version} via a graphical method introduced by Oh in \cite{Oh}.  We obtain the following description of Grassmann necklaces in terms of restricted permutations as an easy corollary of this.
\begin{theorem}\label{res:computing grassmann necklace in terms of permutations, intro} (Theorem~\ref{res:gn in terms of permutation})
Let $K\in \HH$-$spec(\OO_q(M_{m,n}))$, and let $v$ be its associated restricted permutation. Write $\interval{a}{b}$ for the set of integers $\{i \in \mathbb{Z}: a \leq i \leq b\}$, and define $\omega$ to be the permutation in $S_{m+n}$ given by
\[\omega = \begin{pmatrix}1 & 2 & \dots & m & m+1 & \dots m+n\\ m & m-1 & \dots & 1 & m+n & \dots m+1 \end{pmatrix}\]  
Then the Grassmann necklace $\II_K = (I_1, \dots, I_{m+n})$ associated to $K$ is given in terms of $v$ by
\[I_{m+n-k+1} = \left\{\begin{array}{cc} \Big(\interval{1}{m}\cup \interval{m+n-k+1}{m+n}\Big) \backslash \omega v \interval{1}{k} & \qquad k \in \interval{1}{n}, \\
\Big(\interval{1}{m+n} \backslash \omega v\interval{1}{k} \Big) \cup \interval{m+n-k+1}{m} & \qquad k \in \interval{n+1}{m+n}.
\end{array}\right.\]
\end{theorem}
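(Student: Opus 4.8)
The plan is to read the formula off the correspondence already set up in the proof of Theorem~\ref{res:yakimov's minors come from the chains rooted at squares}, so that Theorem~\ref{res:gn in terms of permutation} reduces to rewriting certain index sets in closed form. By Theorem~\ref{res:GN constructs the Ore set we need} and Theorem~\ref{res:yakimov's minors come from the chains rooted at squares}, the quantum Pl\"ucker coordinates indexed by the Grassmann necklace $\II_K = (I_1, \dots, I_{m+n})$ coincide (up to scalars) with the generators attached to $v$ via Oh's graphical method, and in the course of that proof each $I_j$ is described explicitly as a subset of $\interval{1}{m+n}$ built from the combinatorial data of $v$. The first step is therefore to isolate, from that description, the precise set assigned to each position $j$, and to record that the passage from the matrix row/column indexing to the cyclic indexing on $\interval{1}{m+n}$ used for positroids is implemented exactly by the block-reversing permutation $\omega$ (which reverses $\interval{1}{m}$ and reverses $\interval{m+1}{m+n}$), so that the relevant permutation governing the necklace is $\omega v$.

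Second, I would reindex by writing $j = m+n-k+1$ with $k$ running over $\interval{1}{m+n}$; this matches the order in which Oh's diagram traverses the necklace of the cell $S_K$ under the embedding $M_{m,n}^{tnn}\hookrightarrow Gr(m,m+n)$, and it is at $k=n$ that the traversal crosses the boundary between the ``$M$-part'' columns and the identity-block columns of that embedding. The formula then splits into the two stated regimes. For $k\in\interval{1}{n}$ the necklace element still contains the full top block $\interval{1}{m}$ together with the tail $\interval{m+n-k+1}{m+n}$, minus the $k$ values $\omega v\interval{1}{k}$ already consumed; for $k\in\interval{n+1}{m+n}$ the complement form $\interval{1}{m+n}\setminus\omega v\interval{1}{k}$ is the cleaner one, and one adds back the block $\interval{m+n-k+1}{m}$ of identity columns re-entering the window. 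Producing these two expressions is the core computation, and it amounts to translating the staircase shape of Oh's diagram into interval notation.

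Third, I would check consistency: that $|I_j|=m$ for every $j$ — equivalently, the containments $\omega v\interval{1}{k}\subseteq\interval{1}{m}\cup\interval{m+n-k+1}{m+n}$ for $k\le n$, and $\interval{m+n-k+1}{m}\subseteq\omega v\interval{1}{k}$ for $k>n$, both of which follow from $v\in\mathcal S$ — and that successive $I_j$ differ by the standard Grassmann necklace move of Definition~\ref{def:grassmann necklace}. Once the right-hand side is identified with the Grassmann necklace of the positroid attached to $v$ these properties are automatic, so this step is really only a guarantee that no index has slipped.

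The main obstacle is bookkeeping rather than mathematics: one has to align several conventions simultaneously — forward versus reverse traversal of the necklace, whether one works in $Gr(m,m+n)$ or $Gr(n,m+n)$, whether $v$ or $v^{-1}$ labels the cell, and the order/sign twists in the embedding of matrices into the Grassmannian — and it is precisely the combination of these that produces both the permutation $\omega v$ and the shift $k\mapsto m+n-k+1$. Pinning all of them down so that the two interval formulas come out exactly as stated is where essentially all the care lies; the underlying idea, that the Grassmann necklace is just the sequence of ``active windows'' of $\omega v$ read off Oh's diagram, is short.
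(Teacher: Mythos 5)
Your proposal is correct and follows essentially the same route as the paper: Theorem~\ref{res:yakimov's minors come from the chains rooted at squares} already expresses each Grassmann necklace term as a quantum minor built from $I_k = v\interval{1}{k}$, and the paper's proof simply pushes those minors back through the minor--Pl\"ucker dictionary \eqref{eq:translate minors and plucker coordinates} and observes that $\omega = w_m^o w_{m,n}^o$ makes the two block reversals appear as a single $\omega$. Two small remarks: the identification you cite is an exact equality of minors (Theorem~\ref{res:yakimov's minors come from the chains rooted at squares}), so the hedge ``up to scalars'' is not needed here; and your third step (checking $|I_j|=m$ and the necklace exchange property) is harmless but redundant, since those facts are inherited from Proposition~\ref{res:oh grassmann necklace computation} once the sets are recognised as the necklace of $S_K$.
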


The structure of this paper is as follows.  In \S\ref{s:background, notation} we introduce the required notation and background information, including details of the remarkable connection between the study of $\HH$-primes and total nonnegativity obtained in \cite{GLL2,GLL1}.  In \S\ref{s:grassmann necklaces} we construct our denominator sets $E_{JK}$ using Grassmann necklaces and the language of total nonnegativity, and hence prove Theorem~\ref{res:GN gens Ore set, intro version}.  We describe Oh's ``chain rooted at a square'' method for reading off the Grassmann necklace (and hence the set $E_{JK}$) directly from the Cauchon diagram associated to $K$, and give a careful proof of its properties.

In \S\ref{s:permutation version} we consider the question of constructing denominator sets from a ring theory and representation theory perspective, and use the chain construction from \S\ref{s:grassmann necklaces} to prove Theorem~\ref{res: denominator sets agree, intro version}, i.e. that the denominator sets constructed in \cite{FryerYakimov} agree with those constructed via Grassmann necklaces in Theorem~\ref{res:GN gens Ore set, intro version}.  This allows us to describe the relationship between Grassmann necklaces and restricted permutations in this setting (Theorem~\ref{res:computing grassmann necklace in terms of permutations, intro}).  Finally, in \S\ref{s:special cases} we extend the results of \S\ref{s:grassmann necklaces} and \S\ref{s:permutation version} to $\OO_q(GL_n)$ and $\OO_q(SL_n)$, and show that our denominator sets can also be used to study the algebras $Z(A_K)$ associated to the individual strata $spec_K(A)$.  This will provide a unified approach for future work studying the various localizations appearing in the results of \cite{BGconjecture}, and hence a greater understanding of the prime spectra of these algebras.

\section{Background and Notation}\label{s:background, notation}

\subsection{Notation}\label{ss:background notation}
Throughout, we fix $\KK$ to be an infinite field of arbitrary characteristic and $q \in \KK^{\times}$ not a root of unity.  Note that the representation theory of quantum algebras is completely different when $q$ is a root of unity, so this assumption cannot be relaxed.  All algebras considered will be $\KK$-algebras.

For $t \in \mathbb{N}$, let $S_t$ denote the symmetric group on $t$ elements.  \textbf{All permutations will be composed from right to left}.  We will often fix two integers $m,n \geq 2$ and work with elements of the symmetric group $S_{m+n}$; in this case, we write $w_m^o$ for the longest word in the subgroup $S_m$, i.e. the permutation $\left(\begin{smallmatrix}1 & 2 & \dots & m \\ m & m-1 & \dots &1 \end{smallmatrix}\right)$. Similarly, let $w_{m,n}^{o}$ denote the longest word in $S_n$, where now $S_n$ is viewed as the subgroup of permutations on the set $\{m+1, \dots, m+n\}$.  Finally, $c$ will denote the permutation $\left(\begin{smallmatrix} 1 & 2 & \dots & m+n \\ 2 & 3 & \dots & 1 \end{smallmatrix}\right) \in S_{m+n}$.

Let $\interval{a}{b}$ denote the interval $\{i \in \mathbb{Z}: a \leq i \leq b\}$.  When $1 \leq a < b$, we write $\binom{[b]}{a}$ for the collection of all subsets $I \subset \interval{1}{b}$ with $|I| = a$.  

For fixed integers $m,n$ as above, let $I \subset\interval{1}{m+n}$ and define the projections $p_1(I)$ and $p_2(I)$ as follows: 
\[p_1(I) = I \cap \interval{1}{m}, \qquad p_2(I) = I \cap \interval{m+1}{m+n}.\]
We will also write $J + m$ as shorthand for the set $\{j + m : j \in J\}$.  

\subsection{The quantized coordinate ring of $m\times n$ matrices}\label{ss:background_qm}

The algebra of quantum matrices $\OO_q(M_{m,n})$ is defined to be the algebra generated by $mn$ variables $\{X_{ij}: 1 \leq i \leq m,\ 1 \leq j \leq n\}$ subject to the relations
\begin{align*}
X_{ij}X_{il} &= qX_{il}X_{ij} \\
X_{ij}X_{kj} &= qX_{kj}X_{ij} \\
X_{il}X_{kj}  &= X_{kj}X_{il} \\
X_{ij}X_{kl} - X_{kl}X_{ij} &= (q-q^{-1})X_{il}X_{kj}
\end{align*}
for all $i < k$ and $j < l$.  

If $S = \{s_1 < s_2 < \dots < s_r\} \subset \interval{1}{m}$ and $T = \{t_1< t_2 < \dots < t_r\} \subset\interval{1}{n}$ are two sets with $|S| = |T|$, then the \textit{quantum minor} $[S|T]_q$ indexed by these sets is
\begin{equation}\label{eq:def of quantum minor}
[S|T]_q = \sum_{\sigma \in S_r} (-q)^{l(\sigma)} X_{s_1,\sigma(t_1)}\dots X_{s_r,\sigma(t_r)},
\end{equation}
where $l(\sigma)$ denotes the length of the permutation $\sigma \in S_r$ (i.e. the number of inversions in $\sigma$).

Define an action of the torus $\HH = (\KK^{\times})^{m+n}$ on $\OO_q(M_{m,n})$ by
\begin{equation}\label{eq:H action def}h\cdot X_{ij} :=  \alpha_i \beta_j X_{ij}, \quad h = (\alpha_1,\dots,\alpha_m,\beta_1,\dots,\beta_n) \in \HH,\end{equation}
and extend it linearly to the whole of $\OO_q(M_{m,n})$.  This is a rational action of $\HH$ on $\OO_q(M_{m,n})$ (e.g. \cite[II.1.15]{BGbook}).  

If $P$ is a prime ideal of $\OO_q(M_{m,n})$ which is invariant under the action of $\HH$ (i.e. $h(P) = P$ for all $h \in \HH$), we call $P$ a $\HH$\textit{-prime}.  Let $\HH$-$spec(\OO_q(M_{m,n}))$ denote the set of all $\HH$-primes in $\OO_q(M_{m,n})$; when the $\HH$-action is as in \eqref{eq:H action def} (which it always will be in this paper), $\HH$-$spec(\OO_q(M_{m,n}))$ is a finite set.

The $\HH$-primes are used to study the prime spectrum of $\OO_q(M_{m,n})$, but are also interesting objects of study in their own right, e.g. \cite{Casteels2,Cauchon,GL2,Launois,Yakimov1}.  A key tool in many of these papers is \textit{Cauchon diagrams}, which we now define.

\begin{definition}\label{def:cauchon property}
Let $C$ be a Young diagram together with a choice of colouring (black or white) for each square.  Then $C$ is a Cauchon diagram if no black square in $C$ has a white square both to its left in the same row and above it in the same column.  See Figure~\ref{fig:examples and non-examples of Cauchon diagrams} for examples and non-examples of Cauchon diagrams.
\end{definition}

In \cite{Cauchon}, Cauchon proved that $\HH$-$spec(\OO_q(M_{m,n}))$ is in bijection with the set of rectangular Cauchon diagrams with $m$ rows and $n$ columns.

Cauchon diagrams have also appeared independently in the work of Postnikov \cite{Postnikov} under the name $\reflectbox{L}$-diagrams (or Le-diagrams), where they are used to parametrise cells of totally nonnegative matrices.  One can easily pass between the two definitions by replacing all white squares with a $+$ symbol, and all black squares with a $0$.

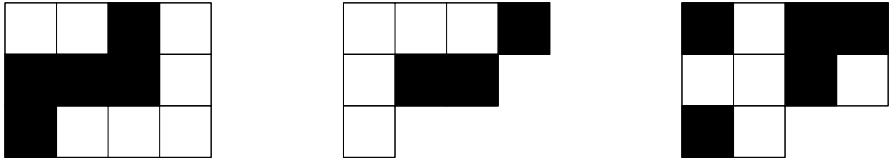
\begin{figure}

\begin{tikzpicture}[scale=1.5]
\tyng(0cm,0cm,4,4,4)
\tgyoung(0cm,0cm,!\ybl::;:,;;;:,;:::!\ywh)

\tyng(3cm,0cm,4,3,1)
\tgyoung(3cm,0cm,!\ybl:::;,:;;,:!\ywh)

\tyng(6cm,0cm,4,4,2)
\tgyoung(6cm,0cm,!\ybl;:;;,::;:,;:)
\end{tikzpicture}

\caption{The left and right diagrams are Cauchon diagrams; the middle one is not.}
\label{fig:examples and non-examples of Cauchon diagrams}
\end{figure}

\subsection{Total nonnegativity in real matrices and the real Grassmannian}\label{ss:background_tnn}
A real $m \times n$ matrix is called \textit{totally nonnegative} (TNN) if all of its minors are zero or positive.  More generally, a point in the real Grassmannian $Gr(k,d)$ is totally nonnegative if it can be represented by a $k \times d$ matrix whose Pl\"ucker coordinates (i.e. its $k\times k$ or \textit{maximal} minors) are all nonnegative.  We write $M_{m,n}^{tnn}$ and $Gr(k,d)^{tnn}$ for the spaces of totally nonnegative matrices and totally nonnegative points in the Grassmannian respectively.

We identify the Pl\"ucker coordinates in $Gr(k,d)$ with the set $\binom{[d]}{k}$ defined in \S\ref{ss:background notation}.  For $I \in \binom{[d]}{k}$ and $M \in Gr(k,d)$, write $\Delta_I(M)$ for the $k\times k$ minor of $M$ whose columns are indexed by $I$.

We can think of points in $Gr(k,d)$ as equivalence classes of $k\times d$ real matrices of rank $k$, where the equivalence is given by row operations.  An obvious choice of representative for a given equivalence class is therefore the reduced row echelon form (RREF) of the matrices in that class.  This allows us to define an embedding $M_{m,n} \hookrightarrow Gr(m,m+n)$ by identifying $m\times n$ matrices with the subspace of points in $Gr(m,m+n)$ whose RREF has the $m\times m$ identity matrix in the first $m$ columns.  We fix a specific embedding $\theta$: 
\begin{equation*}\begin{gathered}
\\
\begin{pmatrix}
b_{1,1} & \dots & b_{1,n} \\
b_{2,1} & \dots & b_{2,n} \\
\vdots & \ddots & \vdots \\
b_{m-1,1} & \dots & b_{m-1,n}\\
b_{m,1} & \dots & b_{m,n}
\end{pmatrix}
 \stackrel{\theta}{\mapsto }
\begin{pmatrix}
1 & 0 & \dots & 0 & 0 & (-1)^{m-1}b_{1,n} & \dots & (-1)^{m-1}b_{1,1} \\
0 & 1 & \dots & 0 & 0 & (-1)^{m-2}b_{2,n} & \dots & (-1)^{m-2}b_{2,1} \\
\vdots & & \ddots & & \vdots & \vdots & \ddots &\vdots \\
0 & 0 & \dots & 1 & 0 & -b_{m-1,n} & \dots & -b_{m-1,1}\\
0 & 0 &\dots  & 0 & 1 & b_{m,n} & \dots & b_{m,1}
\end{pmatrix}
\end{gathered}
\end{equation*}

This reverses the order of the columns in a matrix $B = (b_{ij}) \in M_{m,n}$ and multiplies alternate rows by $-1$.  Under this identification, the minor $[S|T]$ evaluated on $B$ is equal to the maximal minor $\Delta_{(\interval{1}{m} \backslash S) \cup (w_n^o(T)+m)}$ evaluated on $\theta(B)$, so $\theta$ restricts to an embedding $\theta: M_{m,n}^{tnn} \hookrightarrow Gr(m,m+n)^{tnn}$ (see e.g. \cite[\S3]{Postnikov} for more details).  We will often identify $M_{m,n}^{tnn}$ with its image under $\theta$, where the identification between index sets for minors and Pl\"ucker coordinates is given by
\begin{equation}\label{eq:translate minors and plucker coordinates}
\begin{gathered}
\ [S|T]  \longleftrightarrow (\interval{1}{m} \backslash S) \cup (w_n^o(T) + m),\\
I  \longleftrightarrow \Big[\interval{1}{m} \backslash p_1(I) \big| w_n^o(p_2(I) - m)\Big],
\end{gathered}
\end{equation}
where $S \subseteq \interval{1}{m}$, $T \subseteq \interval{1}{n}$, and $I \subseteq \interval{1}{m+n}$.  Recall that the projections $p_1$, $p_2$ were defined in \S\ref{ss:background notation}.

\begin{example}\label{ex:identify minors and plucker coords for 2x2 case}We apply $\theta$ to $B = \left(\begin{smallmatrix}a& b \\ c& d \end{smallmatrix}\right) \in M_{2,2}$:
\[\begin{pmatrix} a& b \\ c & d \end{pmatrix} \mapsto \begin{pmatrix}1 & 0 & -b & -a \\ 0 & 1 & d & c \end{pmatrix}.\]
The matrix $B$ has five minors: $[1|1], [1|2], [2|1], [2|2]$, and the $2\times 2$ determinant $[12|12]$.  These agree with the maximal minors $\Delta_{24}, \Delta_{23}, \Delta_{14}, \Delta_{13}, \Delta_{34}$ of $\theta(B)$ respectively.  The one remaining maximal minor, $\Delta_{12}$, will always be equal to 1 in this setting.
\end{example}

\begin{definition}\label{def:postroid cell}Let $Z \subset \binom{[d]}{k}$ be any family of $k$-subsets.  The \textit{totally nonnegative cell} associated to $Z$ in $Gr(k,d)^{tnn}$ is the set
\[S_Z = \{M \in Gr(k,d)^{tnn} : \Delta_I(M) = 0 \text{ if and only if } I \in Z\}.\]
\end{definition}
Of course, for an arbitrary choice of $Z$ the cell $S_Z$ will often be empty.  In \cite[\S6]{Postnikov}, Postnikov showed that the non-empty cells in $Gr(k,d)^{tnn}$ are parametrized by the set of all Cauchon diagrams on Young diagrams with at most $k$ rows and $(d-k)$ columns.  

Via the embedding $\theta: M_{m,n}^{tnn} \hookrightarrow Gr(m,m+n)^{tnn}$ above, Definition~\ref{def:postroid cell} also induces a definition of the totally nonnegative cells in $M_{m,n}^{tnn}$.  These correspond to the cells $S_Z$ with $\interval{1}{m} \notin Z$, and the non-empty cells are parametrised by the $m\times n$ \textit{rectangular} Cauchon diagrams.

Recall that the $m\times n$ rectangular Cauchon diagrams also parametrise the $\HH$-primes in $\OO_q(M_{m,n})$; this is not a coincidence, as the following theorem of Goodearl, Launois, and Lenagan shows.
\begin{theorem}\label{res:minors in H primes and TNN cells agree}
Let $Z \subseteq \binom{[m+n]}{m}$ be a collection of $m$-subsets with $\interval{1}{m} \notin Z$, and let $Z_q$ be the corresponding family of quantum minors via the identification \eqref{eq:translate minors and plucker coordinates}.  Then the following are equivalent:
\begin{enumerate}
\item The totally nonnegative cell $S_Z$ in $Gr(m,m+n)^{tnn}$ is non-empty.
\item $Z_q$ is a list of all quantum minors belonging to some $\HH$-prime $I_Z$ in $\OO_q(M_{m,n})$.
\end{enumerate}
This defines a bijection between $\HH$-$spec(\OO_q(M_{m,n}))$ and the set of non-empty TNN cells in $M_{m,n}^{tnn}$.

Further, the Cauchon diagram associated to the $\HH$-prime $I_Z$ is the same as the Cauchon diagram associated to the totally nonnegative cell $S_Z$.
\end{theorem}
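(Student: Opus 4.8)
The plan is to reduce the statement to a purely combinatorial comparison of two algorithms — one on the quantum side, one on the totally nonnegative side — both parametrised by the same set of $m \times n$ rectangular Cauchon diagrams. Since Cauchon's theorem (\cite{Cauchon}) already gives a bijection between $\HH$-$spec(\OO_q(M_{m,n}))$ and these diagrams, and Postnikov (\cite[\S6]{Postnikov}) gives a bijection between the non-empty cells of $M_{m,n}^{tnn}$ and the same diagrams, the real content is this: for a fixed Cauchon diagram $C$, the set of quantum minors contained in the $\HH$-prime $K_C$ corresponds, under the index dictionary \eqref{eq:translate minors and plucker coordinates}, exactly to the set of Pl\"ucker coordinates vanishing identically on the cell $S_C$.

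First I would describe the quantum side. Cauchon's bijection is realised by the \emph{deleting derivations algorithm}, which embeds $\OO_q(M_{m,n})$ into a localization of a quantum affine space $\overline{\OO_q(M_{m,n})}$; under this embedding $K_C$ is the contraction of the completely prime ideal generated by the images of the $X_{ij}$ with $(i,j)$ a black square of $C$. By work of Launois and of Goodearl--Lenagan (\cite{Launois,GL2}), each quantum minor has an explicit image under this algorithm, and $[S|T]_q \in K_C$ if and only if that image lies in the black-square ideal; this is governed by a lattice-path condition in $C$ (informally, $[S|T]_q \notin K_C$ precisely when one can route a family of vertex-disjoint paths through the white squares of $C$ joining the rows indexed by $S$ to the columns indexed by $T$). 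In particular the list of quantum minors in $K_C$ is determined by $C$, so statement (2) is well posed.

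Next I would recall that, on the TNN side, Postnikov attaches to each Le-diagram $C$ a positroid $\mathcal{M}_C \subseteq \binom{[m+n]}{m}$ such that $S_C$ is non-empty exactly for these $C$, and the Pl\"ucker coordinates vanishing identically on $S_C$ are exactly the non-bases of $\mathcal{M}_C$; moreover $\mathcal{M}_C$ is described by the planar network read off from $C$, so that membership in $\mathcal{M}_C$ is again a vertex-disjoint-paths condition in the \emph{same} diagram. It then remains to show that $\theta$, together with the translation $[S|T] \leftrightarrow (\interval{1}{m}\backslash S)\cup(w_n^o(T)+m)$, intertwines the two path conditions — equivalently, that the non-bases of $\mathcal{M}_C$ are precisely the translates of the minors of $K_C$. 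This is the heart of the argument and the main obstacle: the two networks are built from $C$ by slightly different recipes, and one must verify that they describe the same family of vanishing index sets.

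The cleanest route I would take is an induction on the number of black squares of $C$. The base case is the empty (all-white) diagram, where $K_C = 0$, $\mathcal{M}_C$ is the uniform matroid, and the only vanishing translated index set is $\interval{1}{m}$ (compare Example~\ref{ex:identify minors and plucker coords for 2x2 case}, where $\Delta_{12} \equiv 1$). For the inductive step one checks that turning a white square black performs the \emph{same} combinatorial move on the set of vanishing index sets in both settings — on the quantum side this is one step of the deleting derivations recursion, and on the TNN side it is Postnikov's corresponding ``un-deleting'' operation on the network — which forces the two vanishing sets to coincide by induction. The sign changes $(-1)^{m-i}$ built into $\theta$ never affect \emph{whether} a coordinate vanishes, only its value, so they play no role here; the column reversal $w_n^o$, on the other hand, must be tracked carefully throughout, which is precisely why it appears in \eqref{eq:translate minors and plucker coordinates}. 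Once the inductive step is established, the equivalence (1)$\Leftrightarrow$(2), the claimed bijection, and the final assertion that the two Cauchon diagrams coincide all follow simultaneously.
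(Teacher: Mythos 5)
The paper does not actually \emph{prove} this theorem: the proof in the source is a single line, ``Combine \cite[Corollary~5.5]{GLL1} and \cite[Theorem~4.1]{GLL2}.'' Those two references are precisely where the hard work lives, and your proposal is essentially an attempt to reconstruct that work from scratch rather than to reproduce the paper's argument. Your high-level plan correctly identifies the two parametrizations (Cauchon's on the quantum side, Postnikov's on the TNN side), correctly reduces the claim to matching the vanishing/membership conditions for minors read off a fixed Cauchon diagram $C$, and correctly flags the translation under $\theta$ and the column reversal $w_n^o$ as the bookkeeping to track. That much is faithful in spirit to what Goodearl, Launois and Lenagan actually do.

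However, the inductive mechanism you propose has a genuine gap. You suggest inducting on the number of black squares of $C$, with the inductive step being ``turn a white square black'' and the claim that this performs ``the same combinatorial move'' on both sides. This does not match how the deleting derivations algorithm operates. That algorithm is a \emph{fixed} sequence of maps from $\OO_q(M_{m,n})$ into successive localizations, traversing all positions $(i,j)$ in a prescribed order, and the Cauchon diagram of a given $\HH$-prime is read off only at the end; there is no intrinsic step of the algorithm corresponding to recolouring one square of a diagram. Consequently, the quantum-side statement ``adding a black square to $C$ enlarges $K_C$ by a describable set of minors'' is not an obvious one-step consequence of the deleting derivations recursion — it is itself a substantial theorem, and it is roughly the content of what GLL2 prove via their restoration algorithm and the explicit membership criteria from \cite{Launois}, combined with the path-system description that Casteels later sharpened. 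In short, you have correctly located ``the heart of the argument and the main obstacle'' but the induction you sketch does not dispose of it; it restates it. To make this work you would need either to prove directly that the two vertex-disjoint-path conditions (the Casteels-type condition for $[S|T]_q \notin K_C$ and the Postnikov network condition for $\Delta_I \not\equiv 0$ on $S_C$) agree under the dictionary \eqref{eq:translate minors and plucker coordinates} — which is a direct but nontrivial bijection between two families of lattice path systems — or to faithfully follow the GLL restoration-algorithm comparison, which is a different and more algebraic induction than the one you describe.
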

\begin{proof}
Combine \cite[Corollary~5.5]{GLL1} and \cite[Theorem~4.1]{GLL2}.
\end{proof}
It is conjectured (but not yet known) that the results of Theorem~\ref{res:minors in H primes and TNN cells agree} should hold for all $\HH$-primes in the quantum Grassmannian $\OO_q(Gr(k,d))$.  We therefore restrict our attention to $\HH$-$spec(\OO_q(M_{m,n}))$ and rectangular Cauchon diagrams in this paper, as we will make use of Theorem~\ref{res:minors in H primes and TNN cells agree} to translate the ring-theoretic problem outlined in the introduction into the setting of total nonnegativity. 

\section{Denominator sets via Grassmann necklaces}\label{s:grassmann necklaces}

We fix integers $m,n \geq 2$ and set $\HH := (\KK^{\times})^{m+n}$.  Let $\HH$ act on $\OO_q(M_{m,n})$ as described in \eqref{eq:H action def}.

Let $J \subsetneq K$ be $\HH$-primes in $\OO_q(M_{m,n})$, and write $A_J$ for the localization of $\OO_q(M_{m,n})/J$ at all of its non-zero $\HH$-eigenvectors. (This is equivalent to taking the graded division ring of the graded-prime ring $\OO_q(M_{m,n})/J$, so this localization always exists.)  Following \cite[Definition 3.8]{BGconjecture}, define
\begin{equation}\label{eq:first def of ZJK}\begin{gathered}
\mathcal{E}_{JK} = \{\text{nonzero $\HH$-eigenvectors }c \in \OO_q(M_{m,n})/J : c \text{ is regular modulo } K/J\},\\
Z_{JK} = \{z \in Z(A_J) : zc \in \OO_q(M_{m,n})/J \text{ for some } c \in \mathcal{E}_{JK}\}.
\end{gathered}\end{equation}
It can be verified directly that $Z_{JK}$ is a well-defined $k$-algebra, and a subalgebra of $Z(A_J)$ \cite[Definition 3.8]{BGconjecture}.

If $\mathcal{E}_{JK}$ forms a denominator set in $\OO_q(M_{m,n})/J$, then $Z_{JK}$ nothing but the algebra $Z(\OO_q(M_{m,n})/J[\mathcal{E}_{JK}^{-1}])$.  However, unless $J = K$ this need not be true in general, so we would like to replace $\mathcal{E}_{JK}$ with a smaller multiplicative set $E_{JK} \subset \mathcal{E}_{JK}$ at which we \textit{can} localize.  The following lemma tells us under what conditions this is possible.

\begin{lemma}\label{res:brown goodearl ore set lemma} \cite[Lemma 3.9]{BGconjecture}
Let $J \subsetneq K$ be $\HH$-primes in $\OO_q(M_{m,n})$.  If there exists a denominator set $E_{JK} \subseteq \mathcal{E}_{JK}$ such that
\begin{equation}\label{eq:condition for EJK to satisfy}E_{JK} \cap (L/J) \neq \emptyset \text{ for all $\HH$-primes $L \supseteq J$ such that }L \not\subseteq K,\end{equation}
then there is an equality
\[Z_{JK} = Z\big(\OO_q(M_{m,n})/J[E_{JK}^{-1}]\big).\]
\end{lemma}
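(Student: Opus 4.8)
The plan is to write $R := \OO_q(M_{m,n})/J$ and $B := R[E_{JK}^{-1}]$, and to observe first that, since $E_{JK} \subseteq \mathcal{E}_{JK}$ consists of nonzero $\HH$-eigenvectors and $A_J = R[H^{-1}]$ is the localization of $R$ at the set $H$ of \emph{all} nonzero $\HH$-eigenvectors, we have a chain $R \subseteq B \subseteq A_J$. The equality $Z_{JK} = Z(B)$ then reduces to the two inclusions $Z_{JK} \subseteq B$ and $B \cap Z(A_J) \subseteq Z_{JK}$: indeed $Z_{JK} \subseteq Z(A_J)$ by definition, so these give $Z_{JK} \subseteq B \cap Z(A_J)$, while $B \cap Z(A_J) = Z(B)$ because any element of $B$ commuting with all of $A_J$ commutes in particular with $B$, and conversely any $z \in Z(B)$ commutes with $R$, hence with $H$, hence with $A_J = R[H^{-1}]$.

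The inclusion $B \cap Z(A_J) \subseteq Z_{JK}$ is immediate: given $z$ in this set, write $z = re^{-1}$ with $r \in R$ and $e \in E_{JK}$ (possible since $E_{JK}$ is a denominator set); then $ze = r \in R$ and $e \in E_{JK} \subseteq \mathcal{E}_{JK}$, so $z \in Z_{JK}$ by \eqref{eq:first def of ZJK}.

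For the reverse inclusion $Z_{JK} \subseteq B$, let $z \in Z_{JK}$, so $z \in Z(A_J)$ and $zc \in R$ for some $c \in \mathcal{E}_{JK}$. Since $A_J$ and $R$ are $\HH$-graded and the homogeneous components of a central element of $A_J$ are again central, and since $c$ is a homogeneous eigenvector, each homogeneous component $z_\lambda$ of $z$ satisfies $z_\lambda c \in R$ and hence lies in $Z_{JK}$; so I may assume $z$ is $\HH$-homogeneous (and nonzero). Now set
\[\mathfrak{a} := \{ r \in R : rz \in R \},\]
a two-sided ideal of $R$ because $z$ is central; it is $\HH$-stable because $z$ is homogeneous, and it is nonzero because $c \in \mathfrak{a}$. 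Let $P_1, \dots, P_t$ be the minimal primes of $R$ over $\mathfrak{a}$; there are finitely many of them since $R$ is Noetherian, and since $\mathfrak{a}$ is $\HH$-stable and $\HH = (\KK^\times)^{m+n}$ is connected, each $P_i$ is an $\HH$-prime of $R$, i.e. has the form $L_i/J$ for some $\HH$-prime $L_i \supseteq J$ of $\OO_q(M_{m,n})$. Because $c \in \mathfrak{a} \subseteq P_i$ while $c \notin K/J$ (as $c$ is a nonzero element regular modulo the completely prime ideal $K/J$), we have $P_i \not\subseteq K/J$, equivalently $L_i \not\subseteq K$, for each $i$. Hypothesis \eqref{eq:condition for EJK to satisfy} now applies: choose $e_i \in E_{JK} \cap (L_i/J)$ and put $e := e_1 \cdots e_t \in E_{JK}$, which lies in $P_i$ for every $i$ and hence in $P_1 \cap \dots \cap P_t$. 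As the prime radical of the Noetherian ring $R/\mathfrak{a}$ is nilpotent, some power $e^N$ lies in $\mathfrak{a}$; since $e^N \in E_{JK}$ and $e^N z \in R$, centrality of $z$ gives $z = (e^N z)(e^N)^{-1} \in R[E_{JK}^{-1}] = B$, as required.

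The step I expect to be the main obstacle is the identification of the minimal primes over $\mathfrak{a}$ with $\HH$-primes of $R$, so that hypothesis \eqref{eq:condition for EJK to satisfy} is applicable to them: this rests on connectedness of the torus together with the standard structure theory of rational torus actions on Noetherian algebras (as in \cite[Chapter II]{BGbook}). A secondary point requiring care is the passage from ``$E_{JK}$ meets each minimal prime over $\mathfrak{a}$'' to ``$E_{JK}$ meets $\mathfrak{a}$ itself'', which is exactly where the nilpotence of the prime radical is used; and one must not overlook that the reduction to $\HH$-homogeneous $z$ is genuinely needed, because a general central element of $A_J$ need not be an $\HH$-eigenvector and so the ideal $\mathfrak{a}$ would not obviously be $\HH$-stable without it.
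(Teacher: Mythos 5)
Your proof is correct, but be aware that the paper contains no proof of this lemma to compare against: the statement is imported directly from Brown and Goodearl (\cite[Lemma 3.9]{BGconjecture}), so what you have written is a self-contained reconstruction of the cited result. Your reconstruction holds up. The chain $R \subseteq B \subseteq A_J$ (with $R = \OO_q(M_{m,n})/J$ and $B = R[E_{JK}^{-1}]$) is justified because $J$ is completely prime, so $R$ is a domain and both localizations embed it; the identification $Z(B) = B \cap Z(A_J)$ and the easy inclusion $B \cap Z(A_J) \subseteq Z_{JK}$ are exactly as you say; and the reduction to $\HH$-homogeneous $z$ is legitimate since $A_J$ is graded by the character group of $\HH$ (an abelian group), so the centre is a graded subring and the components of $zc$ land in the graded subring $R$. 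The heart of the argument — that $\mathfrak{a} = \{r \in R : rz \in R\}$ is a nonzero $\HH$-stable two-sided ideal containing $c$, that its minimal primes are $\HH$-stable and hence of the form $L_i/J$ with $L_i \not\subseteq K$ (because $c \in \mathfrak{a}$ but $c \notin K/J$), and that the hypothesis \eqref{eq:condition for EJK to satisfy} then produces $e \in E_{JK} \cap \bigcap_i P_i$ with $e^N \in \mathfrak{a}$ by nilpotence of the prime radical of the Noetherian ring $R/\mathfrak{a}$ — is sound, and the two external facts you invoke ($\HH$-stability of primes minimal over an $\HH$-stable ideal under a rational action of a connected torus, and Levitzki/Lanski nilpotence of the prime radical in a Noetherian ring) are standard and correctly attributed. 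The only cosmetic caveat is sidedness: fix once whether $E_{JK}$ is a left or right denominator set and write fractions consistently, although since $z$ is central and the elements of $E_{JK}$ are nonzero eigenvectors in a domain this does not affect any step.
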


In this section we will explicitly construct denominator sets $E_{JK}$ for all pairs of $\HH$-primes $J \subsetneq K$ in $\OO_q(M_{m,n})$.

\subsection{A restatement of the problem}\label{ss: restatement of problem}

There are several elementary observations we can make to simplify the conditions in Lemma~\ref{res:brown goodearl ore set lemma}.  First, if we construct a set $E_K:=E_{0K}$ satisfying \eqref{eq:condition for EJK to satisfy} above for $J = \{0\}$, then $E_K \cap J = \emptyset$ for any $J \subseteq K$ and we can take $E_{JK}$ to be the image of $E_K$ in $\OO_q(M_{m,n})/J$.  Further, since $q$ is not a root of unity, all $\HH$-primes are completely prime by \cite[Theorem II.5.14]{BGbook} and so $c \in \OO_q(M_{m,n})$ is regular modulo $K$ if and only if $c \notin K$.  

By \cite[Proposition~10.7]{GW1} all Ore sets are denominator sets in this setting, so it suffices to check that our sets $E_{K}$ satisfy the Ore conditions.  In fact, we can do even better than this: by \cite{MinorsOreSets}, any quantum minor generates an Ore set in $\OO_q(M_{m,n})$, so it is enough to find a generating set for $E_K$ that consists of quantum minors.

Finally, we also impose the condition that $E_K$ should be finitely generated, in order to simplify the study of the algebras $Z_{JK}$ in future.

We therefore rephrase the problem as follows: for each $K \in \HH$-$spec(\OO_q(M_{m,n}))$, we would like to find a multiplicatively closed set $E_K \subset \OO_q(M_{m,n})$ which is generated by finitely many quantum minors and satisfies:
\begin{itemize}
\item $E_K \cap K = \emptyset$.
\item If $L \in \HH$-$spec(\OO_q(M_{m,n}))$ and $L \not\subseteq K$, then $L \cap E_K \neq \emptyset$.
\end{itemize}
Following the notation of \cite{FryerYakimov}, we will call a set with these properties a \textit{separating Ore set} for the $\HH$-prime $K$.

\subsection{Translation to total nonnegativity}

Constructing separating Ore sets in $\OO_q(M_{m,n})$ turns out to be a difficult ring-theoretic question, requiring us to be able to identify exactly which $\HH$-primes a given quantum minor belongs to.  On the combinatorical side, however, the question of identifying whether a minor is zero on a given cell has been completely solved: see Theorem~\ref{res:oh's result on minors in a cell via GN} below.  The statement of this result requires a combinatorial object called the Grassmann necklace, which we now define.

\begin{definition}\label{def:grassmann necklace}
Let $k,d$ be positive integers with $k < d$, and $\mathcal{I} = (I_1, \dots, I_d)$ a sequence of $k$-subsets of $\interval{1}{d}$.  Then $\mathcal{I}$ is a \textit{Grassmann necklace of type $(k,d)$} if it satisfies:
\begin{itemize}
\item If $i \in I_i$ then $I_{i+1} = (I_i \backslash \{i\}) \cup \{j\}$ for some $j \in \interval{1}{d}$.
\item If $i \not\in I_i$ then $I_{i+1} = I_i$.
\end{itemize}
for each $i \in \interval{1}{d}$.  Note that all indices are taken modulo $d$.
\end{definition}
\begin{definition}\label{def:gale orders}
For each $i \in \interval{1}{d}$, define a total order $\leq_i$ on $\interval{1}{d}$ by
\[i <_i i+1 <_i \dots <_i d <_i 1 <_i \dots <_i i-1.\]
This induces a partial order (also denoted $\leq_i$) on $k$-subsets of $\interval{1}{d}$ as follows: if $S = \{s_1 <_i s_2 <_i \dots <_i s_k\}$ and $T = \{t_1 <_i t_2 <_i \dots <_i t_k\}$ are $k$-subsets of $\interval{1}{d}$, then
\[S \leq_i T \text{ if and only if } s_r \leq_i t_r \text{ for all }r,\ 1 \leq r \leq k.\]
An undecorated symbol $\leq$ will always mean the standard order, i.e. $\leq_1$.
\end{definition}

In \cite[Theorem~17.1]{Postnikov}, Postnikov showed that the non-empty TNN cells in $Gr(k,d)^{tnn}$ are in bijection with the Grassmann necklaces of type $(k,d)$.  The following theorem of Suho Oh uses the Grassmann necklace of a TNN cell to characterise exactly which minors are zero on elements of that cell.

\begin{theorem}\label{res:oh's result on minors in a cell via GN}\cite[Theorem~8]{Oh}
Let $\II = (I_1, \dots, I_d)$ be a Grassmann necklace of type $(k,d)$, and $S_{\II}$ the associated TNN cell in $Gr(k,d)^{tnn}$.  Then the full list of the minors which are zero on elements of $S_{\II}$ is given by
\[\left\{\Delta_T : \exists i \in \interval{1}{d} \text{ such that  } I_i \not\leq_i T \right\}.\]
\end{theorem}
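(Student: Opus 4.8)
The plan is to recast the statement as a matroid identity and prove that. By Postnikov's classification (\cite[Theorem~17.1]{Postnikov}), the cell $S_{\II}$ is the positroid cell of a single rank-$k$ matroid $\mathcal{M}$ on $\interval{1}{d}$, namely the matroid whose bases are $\{T : \Delta_T \text{ is nonzero on } S_{\II}\}$; this is well-defined because the support of the maximal minors is constant on a positroid cell (so a given $\Delta_T$ is either everywhere zero or nowhere zero on $S_{\II}$), and by that same bijection the Grassmann necklace attached to $\mathcal{M}$ --- whose $i$-th entry is the $\leq_i$-minimal basis of $\mathcal{M}$ --- is precisely $\II$. Consequently the theorem is equivalent to the equality of sets
\[\mathcal{B}(\mathcal{M}) \;=\; \big\{\, T \in \textstyle\binom{[d]}{k} \;:\; I_i \leq_i T \ \text{ for all } i \in \interval{1}{d} \,\big\}.\]

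One inclusion is immediate: if $T\in\mathcal{B}(\mathcal{M})$ then $I_i\leq_i T$ for every $i$, since $I_i$ is by definition the $\leq_i$-least basis. The reverse inclusion is the substance, and I would establish it from two ingredients. The first is the ``prefix'' reformulation of the Gale order: $I_i\leq_i T$ holds if and only if $|T\cap[i,b]|\leq|I_i\cap[i,b]|$ for every cyclic interval $[i,b]$ (the cyclic intervals with left endpoint $i$ being exactly the $\leq_i$-initial segments). The second is a pair of facts about $\mathcal{M}$: (a) for every cyclic interval $[a,b]$ one has $\mathrm{rk}_{\mathcal{M}}([a,b])=|I_a\cap[a,b]|$ --- simply because the matroid greedy algorithm run in the order $\leq_a$ outputs the basis $I_a$ and, restricted to each initial segment $[a,b]$, a basis of that segment; and (b) a $k$-subset $T$ is a basis of the \emph{positroid} $\mathcal{M}$ as soon as $|T\cap[a,b]|\leq\mathrm{rk}_{\mathcal{M}}([a,b])$ for every cyclic interval $[a,b]$, i.e.\ the basis inequalities need only be tested against cyclic intervals rather than against all flats. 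Granting these, any $T$ with $I_i\leq_i T$ for all $i$ satisfies $|T\cap[a,b]|\leq|I_a\cap[a,b]|=\mathrm{rk}_{\mathcal{M}}([a,b])$ for every cyclic interval, hence is a basis of $\mathcal{M}$, which completes the argument.

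I expect (b) to be the main obstacle: reducing the basis test to cyclic intervals is exactly where the cyclic / total-nonnegativity structure is genuinely used, and the analogous statement is false for arbitrary matroids; proving it amounts to understanding how a positroid decomposes along cyclic intervals, in effect re-deriving part of Postnikov's classification. A route closer to the graphical methods of \cite{Oh} avoids isolating (b) by instead showing directly that $\mathcal{M}(\II):=\{T:I_i\leq_i T\ \forall i\}$ satisfies the matroid basis-exchange axiom, is itself a positroid, and has Grassmann necklace $\II$; together with the easy inclusion $\mathcal{B}(\mathcal{M})\subseteq\mathcal{M}(\II)$ and the uniqueness of the positroid with a prescribed necklace, this again forces equality. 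In that approach the work is the exchange verification for $\mathcal{M}(\II)$, which one would run by induction on $d$, peeling off the element $d$ through the deletion or contraction dictated by the necklace rule at position $d$ and checking that the Gale conditions transform correctly. Either way, the crucial step is an honest use of the positroid structure, not a formal manipulation of the order conditions.
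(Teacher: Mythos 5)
This theorem is not proven in the paper: it is quoted directly as Theorem~8 of Oh's paper, so there is no in-paper argument to compare against. Judged on its own terms, your setup is correct: the reduction to the matroid identity $\mathcal{B}(\mathcal{M}) = \{T : I_i \leq_i T\ \forall i\}$ via Postnikov's bijection, the easy inclusion, the cyclic-interval reformulation of the Gale order, and the rank formula $\mathrm{rk}_{\mathcal{M}}([a,b]) = |I_a \cap [a,b]|$ from running the greedy algorithm in the $\leq_a$ order are all sound.

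The genuine gap is the one you flag yourself: your ingredient (b), that a $k$-subset $T$ belongs to a positroid as soon as $|T\cap[a,b]| \leq \mathrm{rk}_{\mathcal{M}}([a,b])$ for every cyclic interval $[a,b]$, is not proven, and once combined with (a) and the prefix reformulation it \emph{is} the theorem rather than a lemma feeding into it --- so as written the argument reduces the statement to an unproved statement of essentially equal depth. Your second route --- showing $\mathcal{M}(\II) := \{T : I_i \leq_i T\ \forall i\}$ satisfies basis exchange, is a positroid, and has Grassmann necklace $\II$, then invoking uniqueness from Postnikov's parametrisation --- is actually the shape of Oh's own argument (he shows the positroid with necklace $\II$ equals the intersection of the cyclically shifted Schubert matroids $\{T : I_i \leq_i T\}$), but the exchange verification and the positroid identification are also left unproved here. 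In either form, the remaining step is the heart of Oh's Theorem~8 and requires genuine positroid structure theory (plabic graphs, boundary measurement, or a careful deletion/contraction induction on the necklace). As it stands the proposal is a correct road map that locates the difficulty accurately, but it does not close it.
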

\begin{remark}\label{rem:grassmann necklace elements are nonzero minors}
For any Grassmann necklace $\II = (I_1, \dots, I_n)$, we always have $I_i \leq_i I_j$ for all $i,j$ by \cite[Lemma 4.4]{OPS}.  Combining this with Theorem~\ref{res:oh's result on minors in a cell via GN}, we find that $\Delta_{I_i}(A) > 0$ for all $A \in S_{\II}$ and all $i \in \interval{1}{d}$.  In other words, all terms in the Grassmann necklace of a cell define minors which are strictly positive on that cell.
\end{remark}
\begin{remark}\label{rem:GN of matrix cells}
The Grassmann necklaces of TNN cells in $M_{m,n}^{tnn}$ are exactly those with $I_1 = \interval{1}{m}$.  
\end{remark}

In \S\ref{ss:chain rooted at,etc} below, we will describe how to read off the Grassmann necklace of a cell from its Cauchon diagram; we therefore postpone any examples of Grassmann necklaces until then.

\subsection{Constructing Ore sets $E_K$ in terms of Grassmann necklaces}

The Grassmann necklace turns out to be exactly what we need to construct our separating Ore sets.

Let $K$ be a $\HH$-prime in $\OO_q(M_{m,n})$.  Applying the identifications outlined in \S\ref{ss:background_tnn}, we will use ``the Grassmann necklace of $K$'' as shorthand for ``the Grassmann necklace associated to the TNN cell in $Gr(m,m+n)^{tnn}$ with the same Cauchon diagram as $K$''.  Recall that the identification between quantum minors and Pl\"ucker coordinates is given in \eqref{eq:translate minors and plucker coordinates}.

We are now in a position to state our first main theorem.  Recall that the projections $p_1$, $p_2$ were defined in \S\ref{ss:background notation}.

\begin{theorem}\label{res:GN constructs the Ore set we need}
Let $K \in \HH$-$spec(\OO_q(M_{m,n}))$ and $q \in \KK^{\times}$ not a root of unity, and write $\II_K = (K_1, \dots, K_{m+n})$ for the Grassmann necklace of $K$.  Define the following set of quantum minors:
\[\widetilde{E}_K = \Big\{[S_i|T_i]_q : S_i = \interval{1}{m} \backslash p_1(K_i),\ T_i = w_n^o(p_2(K_i) - m),\ 2 \leq i \leq m+n\Big\}\]
Then $\widetilde{E}_K \cap K = \emptyset$, and if $L$ is any other $\HH$-prime with $L \not\subseteq K$, then $\widetilde{E}_K \cap L \neq \emptyset$.
\end{theorem}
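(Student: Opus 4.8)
The plan is to transport the whole statement into the language of total nonnegativity via Theorem~\ref{res:minors in H primes and TNN cells agree} and then read everything off from Oh's theorem (Theorem~\ref{res:oh's result on minors in a cell via GN}). First observe that under the dictionary \eqref{eq:translate minors and plucker coordinates}, the minor $[S_i|T_i]_q \in \widetilde{E}_K$ is exactly the quantum minor corresponding to the Pl\"ucker coordinate indexed by $K_i$; so, apart from the degenerate term $K_1 = \interval{1}{m}$ (which would give the empty minor $1$, and is excluded), $\widetilde{E}_K$ is precisely the family of quantum minors attached to the Grassmann necklace of the cell $S_K$. Moreover, by Theorem~\ref{res:minors in H primes and TNN cells agree}, for any $\HH$-prime $P$ with cell $S_P$ we have $[S_i|T_i]_q \in P$ if and only if $\Delta_{K_i}$ vanishes on $S_P$. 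With this in hand the first assertion $\widetilde{E}_K \cap K = \emptyset$ is immediate: by Remark~\ref{rem:grassmann necklace elements are nonzero minors} every $\Delta_{K_i}$ is strictly positive on $S_K$, hence nonzero there, hence $[S_i|T_i]_q \notin K$ for all $i$.

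For the second assertion I would argue by contrapositive, first recording a standard fact about Grassmann necklaces. Combining Theorem~\ref{res:oh's result on minors in a cell via GN} with the inequalities $I_i \leq_i I_j$ from Remark~\ref{rem:grassmann necklace elements are nonzero minors}, one sees that if $\II_L = (L_1, \dots, L_{m+n})$ is the Grassmann necklace of a cell $S_L$, then $L_i$ is the $\leq_i$-minimal element of the set $\{T : \Delta_T \text{ does not vanish on } S_L\}$. Now let $L$ be an $\HH$-prime with $\widetilde{E}_K \cap L = \emptyset$; I want to deduce $L \subseteq K$. By the translation above, $\Delta_{K_i}$ is nonzero on $S_L$ for every $i$ (the index $i=1$ being harmless, since $L_1 = K_1 = \interval{1}{m}$ by Remark~\ref{rem:GN of matrix cells} and $\Delta_{\interval{1}{m}}$ never vanishes on a cell of $M_{m,n}^{tnn}$), so by the minimality property $L_i \leq_i K_i$ for all $i$.

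Next, take any quantum minor lying in $L$, corresponding to a Pl\"ucker coordinate $T$; then $\Delta_T$ vanishes on $S_L$, so by Theorem~\ref{res:oh's result on minors in a cell via GN} there is an index $j$ with $L_j \not\leq_j T$. If we had $K_j \leq_j T$, then transitivity of the partial order $\leq_j$ together with $L_j \leq_j K_j$ would force $L_j \leq_j T$, a contradiction; hence $K_j \not\leq_j T$, and Theorem~\ref{res:oh's result on minors in a cell via GN} gives that $\Delta_T$ vanishes on $S_K$, i.e. our quantum minor lies in $K$. Thus every quantum minor contained in $L$ is contained in $K$; since the $\HH$-primes of $\OO_q(M_{m,n})$ are generated by the quantum minors they contain (a result of Launois), this yields $L \subseteq K$, which is the contrapositive of what we wanted.

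The routine portions are the bookkeeping with \eqref{eq:translate minors and plucker coordinates} and keeping track of which cyclic order $\leq_i$ is in force. The one genuine external input is the description of the $\HH$-primes of $\OO_q(M_{m,n})$ as being generated by their quantum minors: this is exactly what lets us pass between the ring-theoretic inclusion $L \subseteq K$ and the combinatorial inclusion of vanishing sets, and I expect it to be the main point to flag (everything combinatorial then reduces to Oh's theorem plus the minimality of Grassmann necklace terms). The only delicate step inside the combinatorial core is the transitivity trick $L_j \leq_j K_j \leq_j T \Rightarrow L_j \leq_j T$, which works precisely because each $\leq_j$ is an honest partial order on $m$-subsets.
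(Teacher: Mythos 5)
Your proof is correct and is essentially the paper's own argument, just phrased as a contrapositive: the paper directly shows that $L\not\subseteq K$ forces some $K_i\not\geq_i L_i$ (using a witness minor in $L\setminus K$ and the identical transitivity trick $\Lambda\geq_i K_i\geq_i L_i$), whereas you show that $\widetilde{E}_K\cap L=\emptyset$ gives $L_i\leq_i K_i$ for all $i$ and hence $L\subseteq K$. The ingredients — Theorem~\ref{res:minors in H primes and TNN cells agree}, Oh's Theorem~\ref{res:oh's result on minors in a cell via GN} together with Remark~\ref{rem:grassmann necklace elements are nonzero minors}, the minimality of Grassmann necklace terms, and the generation of $\HH$-primes by quantum minors (the paper cites Casteels \cite[Theorem~4.4.1]{Casteels2} for this, rather than Launois) — are the same.
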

\begin{proof}
Let $C_K$ be the Cauchon diagram of the $\HH$-prime $K$, and write $S_K$ for the totally nonnegative cell associated to $C_K$.  By Theorem~\ref{res:minors in H primes and TNN cells agree}, a quantum minor belongs to $K$ if and only if the corresponding Pl\"ucker coordinate is zero on elements of the cell $S_K$.  Since the quantum minors in $\widetilde{E}_K$ are exactly those corresponding to the Grassmann necklace of $K$, it follows immediately from Theorem~\ref{res:oh's result on minors in a cell via GN} and Remark~\ref{rem:grassmann necklace elements are nonzero minors} that $\widetilde{E}_K \cap K = \emptyset$.

Now suppose $L \in \HH$-$spec(\OO_q(M_{m,n}))$ is such that $L \not\subseteq K$, and let $\II_L := (L_1, \dots, L_{m+n})$ be the corresponding Grassmann necklace.  We claim that there is some $i \in \interval{2}{m+n}$ such that $K_i \not\geq_i L_i$.  (We can ignore the case $i=1$ since $K_1 = L_1 = \interval{1}{m}$.)

Indeed, since every $\HH$-prime in $\OO_q(M_{m,n})$ is generated by quantum minors \cite[Theorem~4.4.1]{Casteels2}, there is some quantum minor $[S|T]_q \in L \backslash K$.  Define $\Lambda := (\interval{1}{m}\backslash S) \cup (w_m^o(T)+m)$, so that $[S|T]_q \not\in K$ is equivalent to $\Delta_{\Lambda}$ being non-zero on the cell $S_K$ (Theorem~\ref{res:minors in H primes and TNN cells agree}).  By Theorem~\ref{res:oh's result on minors in a cell via GN}, we have $\Lambda \geq_i K_i$ for all $i$.

If $K_i \geq_i L_i$ for all $i$, then we also have $\Lambda \geq_i L_i$ for all $i$, and hence $[S|T]_q \not\in L$; this is a contradiction.  

Therefore there must be at least one $i$ such that $K_i \not\geq_i L_i$.  Applying Theorem~\ref{res:oh's result on minors in a cell via GN} again, we see that $\Delta_{K_i}$ must be zero on the cell $S_L$ and hence the corresponding quantum minor $[S_i|T_i]_q$ belongs to $L$.  We have shown that $L \cap \widetilde{E}_K \neq \emptyset$, as required.
\end{proof}

\begin{theorem}\label{res:we have constructed ore sets yay}
Let $q$, $K$, $\widetilde{E}_K$ be as in Theorem~\ref{res:GN constructs the Ore set we need}, and let $J$ be any other $\HH$-prime in $\OO_q(M_{m,n})$ with $J \subsetneq K$.  Let $E_K$ be the multiplicative set generated by $\widetilde{E}_K$ in $\OO_q(M_{m,n})$, and $E_{JK}$ the image of $E_K$ in $\OO_q(M_{m,n})/J$.  Then we have an equality
\[Z_{JK} = Z\Big(\big(\OO_q(M_{m,n})/J\big)[E_{JK}^{-1}]\Big).\]
\end{theorem}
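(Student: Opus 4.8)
The plan is to verify the hypotheses of Lemma~\ref{res:brown goodearl ore set lemma} (Brown--Goodearl's criterion) for the set $E_{JK}$ obtained by projecting $E_K$ into $\OO_q(M_{m,n})/J$, and then the desired equality of algebras follows immediately. So the task reduces to three things: (i) $E_{JK} \subseteq \mathcal{E}_{JK}$; (ii) $E_{JK}$ is a denominator set in $\OO_q(M_{m,n})/J$; and (iii) $E_{JK}$ meets every $\HH$-prime $L/J$ of $\OO_q(M_{m,n})/J$ with $L \not\subseteq K$.

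First I would deal with the denominator condition. By the reductions recorded in \S\ref{ss: restatement of problem}, each generator of $\widetilde E_K$ is a single quantum minor, and by \cite{MinorsOreSets} any quantum minor generates an Ore set in $\OO_q(M_{m,n})$; a finite product of Ore sets generated by normal-up-to-commutation elements is again an Ore set, so $E_K$ is an Ore (hence denominator, by \cite[Proposition~10.7]{GW1}) set in $\OO_q(M_{m,n})$. Ore sets descend to quotients by ideals: since $J$ is an ideal, the image $E_{JK}$ of $E_K$ in $\OO_q(M_{m,n})/J$ is again an Ore set, and because $q$ is not a root of unity $J$ is completely prime, so $\OO_q(M_{m,n})/J$ is a domain and $E_{JK}$ consists of regular (indeed nonzero) elements; thus $E_{JK}$ is a denominator set there.

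Next, the containment $E_{JK} \subseteq \mathcal{E}_{JK}$ and the separating property. Theorem~\ref{res:GN constructs the Ore set we need} already gives $\widetilde E_K \cap K = \emptyset$, so every generator is nonzero modulo $K$; since each generator is an $\HH$-eigenvector (quantum minors are $\HH$-eigenvectors under the action \eqref{eq:H action def}), so is every element of $E_K$, and an eigenvector not lying in $K$ is regular modulo $K/J$ because $K/J$ is completely prime. Hence the image of $E_K$ in $\OO_q(M_{m,n})/J$ lies in $\mathcal{E}_{JK}$. For the separating condition \eqref{eq:condition for EJK to satisfy}: an $\HH$-prime of $\OO_q(M_{m,n})/J$ with $L \not\subseteq K$ corresponds to an $\HH$-prime $L$ of $\OO_q(M_{m,n})$ with $J \subseteq L$ and $L \not\subseteq K$; Theorem~\ref{res:GN constructs the Ore set we need} then furnishes an element of $\widetilde E_K \cap L$, whose image in $\OO_q(M_{m,n})/J$ lies in $E_{JK} \cap (L/J)$, which is therefore nonempty.

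Finally, with (i)--(iii) in hand, Lemma~\ref{res:brown goodearl ore set lemma} applied to the pair $J \subsetneq K$ and the denominator set $E_{JK}$ yields exactly
\[Z_{JK} = Z\Big(\big(\OO_q(M_{m,n})/J\big)[E_{JK}^{-1}]\Big),\]
as claimed. The only genuinely delicate point is the denominator claim in step (ii): one must be careful that a \emph{product} of Ore sets (rather than just each factor separately) is Ore, which uses that the generating quantum minors normalize one another up to a power of $q$, or alternatively that localizing successively at each generator is well-defined and independent of order; everything else is bookkeeping with the bijection of Theorem~\ref{res:minors in H primes and TNN cells agree} and the complete primeness of $\HH$-primes.
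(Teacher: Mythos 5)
Your proposal is correct and follows essentially the same route as the paper: verify the hypotheses of Lemma~\ref{res:brown goodearl ore set lemma} for the projected set $E_{JK}$, using Theorem~\ref{res:GN constructs the Ore set we need} for disjointness from $K$ and the separating property, \cite{MinorsOreSets} together with \cite[Proposition~10.7]{GW1} for the denominator property, and complete primeness of $\HH$-primes to pass to the quotient. One caveat on the point you flag as delicate: your primary justification --- that the generating quantum minors normalize one another up to a power of $q$ --- is not something you can take for granted (arbitrary quantum minors do not quasi-commute, and quasi-commutation of the necklace minors would require extra input such as weak separation); the argument the paper implicitly relies on is simpler, namely that the left and right Ore conditions for a multiplicative set need only be checked against a set of generators, and \cite{MinorsOreSets} supplies exactly this minor-by-minor, with denominators taken as powers of the same minor, so the multiplicative set generated by $\widetilde{E}_K$ is Ore without any commutation statement between distinct minors.
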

\begin{proof}
By \cite{MinorsOreSets} and \cite[Proposition~10.7]{GW1}, $E_K$ is a denominator set in $\OO_q(M_{m,n})$.  Complete primality implies that we have $E_K \cap K = \emptyset$, and hence $E_{JK}$ is well behaved for all $J \subsetneq K$.  Finally, $E_{JK}$ must be a denominator set in $\OO_q(M_{m,n})/J$ by the universality of localization and quotients.  The conclusion of the theorem now follows directly from Theorem~\ref{res:GN constructs the Ore set we need} and Lemma~\ref{res:brown goodearl ore set lemma}.
\end{proof}

\subsection{Computing the Grassmann necklace of a cell}\label{ss:chain rooted at,etc}
While Theorem~\ref{res:we have constructed ore sets yay} guarantees that the desired Ore sets will always exist, it is very quiet on the subject of how to actually construct such a collection for a given $\HH$-prime $K$.

In this section we introduce Oh's ``chain rooted at a square'' construction from \cite{Oh}, which can be used to read the Grassmann necklace directly off the corresponding Cauchon diagram.  Since we will make repeated use of these chains in later sections, we first give a detailed proof of their properties.

Let $C$ be an $m\times n$ Cauchon diagram, with rows numbered with $\interval{1}{m}$ from top to bottom, and columns numbered with $\interval{1}{n}$ from left to right.  We will often want to refer to regions northwest of a square, which we do as follows: 
\begin{definition}\label{def:region northwest}
If $(x,y)$ is a square in $C$, the region \textbf{weakly northwest} of $(x,y)$ consists of the squares $\{(a,b):a \leq x, b \leq y\}$, and the region \textbf{strictly northwest} of $(x,y)$ consists of the squares $\{(a,b):a < x,b < y\}$.
\end{definition}

The next lemma shows that we can always use the Cauchon condition to find a unique ``nearest'' white square northwest of a given square.  

\begin{lemma}\label{res:nearest white square in chain} Let $C$ be a Cauchon diagram, and $(x,y)$ a square in $C$.
\begin{enumerate}
\item If $(x,y)$ is a white square and the region strictly north-west of $(x,y)$ is not all black, there is a unique white square $(a,b)$ in this region such that $|x-a|$ and $|y-b|$ are simultaneously minimised.
\item If $(x,y)$ is a black square and the region weakly north-west of $(x,y)$ is not all black, there is a unique white square $(a,b)$ in this region such that $|x-a|$ and $|y-b|$ are simultaneously minimised.
\end{enumerate}
\end{lemma}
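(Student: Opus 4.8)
The plan is to first observe that the regions in Definition~\ref{def:region northwest} are, in fact, full solid rectangles of cells rather than staircase-shaped regions. Indeed, since $C$ is a Young diagram with row lengths weakly decreasing from top to bottom and $(x,y)$ is a cell of $C$, every row $c \leq x$ has length at least that of row $x$, which is at least $y$; hence the whole rectangle $\{(c,d) : 1 \leq c \leq x,\ 1 \leq d \leq y\}$ consists of cells of $C$. So in part~(1) the strictly northwest region is precisely the rectangle $R = \{(c,d) : c \leq x-1,\ d \leq y-1\}$, and in part~(2) the weakly northwest region is precisely $R = \{(c,d) : c \leq x,\ d \leq y\}$. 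Recording this first removes any worry about which cells actually occur when we later chase the Cauchon condition.

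Next I would unwind the phrase ``simultaneously minimised''. For any cell $(a,b) \in R$ we have $a \leq x$ and $b \leq y$, so $|x-a| = x-a$ and $|y-b| = y-b$; minimising both therefore means finding a white cell $(a,b) \in R$ with $a \geq a'$ and $b \geq b'$ for every white cell $(a',b') \in R$, i.e. a componentwise-largest white cell of $R$. Uniqueness is then free: if $(a,b)$ and $(a',b')$ both have this property, applying it to each other gives $a = a'$ and $b = b'$. So the lemma reduces to the existence of a componentwise-largest white cell in $R$, under the hypothesis (``not all black'') that $R$ contains at least one white cell.

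For existence, I would take $a$ to be the largest row index occurring among white cells of $R$, and then $b$ the largest column index of a white cell in row $a$ of $R$; by construction $(a,b)$ is white, and no white cell of $R$ sits in a row below $a$. It remains to show $b \geq d$ for every white cell $(c,d) \in R$. If this failed, pick such a $(c,d)$ with $d > b$; then $c \neq a$ by maximality of $b$ in row $a$, and $c < a$ since no white cell of $R$ lies below row $a$. Now consider the cell $(a,d)$, which lies in $R$ (hence in $C$) by the rectangle observation. It cannot be white, as that would contradict the maximality of $b$ in row $a$; but if it is black, then $(a,b)$ is a white cell to its left in the same row (because $b < d$) and $(c,d)$ is a white cell above it in the same column (because $c < a$), contradicting the Cauchon condition of Definition~\ref{def:cauchon property}. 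This contradiction forces $b \geq d$, establishing part~(1). Part~(2) is the identical argument carried out with the larger rectangle; the hypothesis that $(x,y)$ is black plays no role beyond forcing $(a,b) \neq (x,y)$ automatically.

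I do not expect a genuine obstacle here: the only point requiring real care is the rectangle observation, since the step involving $(a,d)$ silently uses that $(a,d)$ is a cell of $C$, and this in turn depends on the paper's conventions for Young and Cauchon diagrams (rows indexed top to bottom, row lengths weakly decreasing). Once that is pinned down, the Cauchon condition does the substantive work in a single line.
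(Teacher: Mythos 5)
Your proof is correct and takes essentially the same route as the paper's: you construct the row-first minimizer $(a,b)$ and invoke the Cauchon condition on the ``corner'' cell $(a,d)$ to rule out any white cell with a larger column index, which is exactly how the paper shows that its two candidate minimizers (row-first and column-first) must coincide. The only difference is cosmetic: you handle parts (1) and (2) with a single unified rectangle argument, whereas the paper treats the black case by first splitting on whether the relevant portions of row $x$ and column $y$ contain white cells, but the underlying mechanism is identical.
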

\begin{proof}
First note that deleting rows from the bottom of $C$ or columns from the right of $C$ has no effect on whether $C$ is a Cauchon diagram or the existence of the promised square $(a,b)$, so it is enough to prove the lemma for the bottom right square of $C$. This allows us to dispense with repetition of phrases like ``in the region northwest of $(x,y)$''.  

So let $(x,y)$ be the bottom right square of $C$, and suppose that it is a white square.  Let $(a,b)$ be the white square chosen by first minimising $|x-a|$ and then $|y-b|$ (subject to the conditions $a \neq x$ and $b \neq y$) and let $(a',b')$ be the white square obtained by minimising $|y-b'|$ first and then $|x-a'|$ afterwards (again subject to $a' \neq x$ and $a' \neq y$).  If $(a,b) \neq (a',b')$ then we have $a \geq a'$ and $b' \geq b$, and at least one of these inequalities is strict.  This forces the square $(a,b')$ to also be white by the Cauchon property of $C$ (see Figure~\ref{fig:unique chain}).  However, this contradicts our choice of either $a$ or $b'$ (both of which were chosen without constraint), so we must have $(a,b) = (a',b')$, and this is the unique white square promised by the lemma.

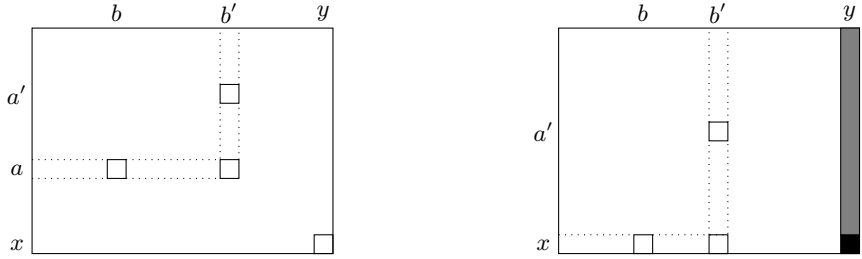
\begin{figure}[h!]

\begin{tikzpicture}
\draw rectangle (4,3);

\draw[dotted] (0,1) -- ++(1,0) ++(0.25,0) -- ++(1.25,0) ++(0.25,0.25) -- ++(0,0.75) ++ (0,0.25) -- ++(0,0.75);
\draw[dotted] (0,1.25) -- ++(1,0) ++ (0.25,0) -- ++( 1.25,0)  -- ++(0,0.75) ++(0,0.25) -- ++(0,0.75);

\draw (1,1) rectangle +(0.25,0.25)  (2.5,2) rectangle +(0.25,0.25) (3.75,0) rectangle +(0.25,0.25);
\draw (2.5,1) rectangle +(0.25,0.25);

\draw (-0.2,0.12) node{\footnotesize $x$} ++(0,1) node{\footnotesize $a$} ++(0,1) node{\footnotesize $a'$};
\draw (1.12,3.2) node{\footnotesize $b$} ++(1.5,0) node{\footnotesize $b'$} ++(1.25,0) node{\footnotesize $y$};

\draw (7,0) rectangle (11,3);

\draw[dotted] (7,0.25) -- ++(1,0) ++(0.25,0) -- ++(1,0) -- ++(0,1.25) ++(0,0.25) -- ++(0,1.25);
\draw[dotted] (9,0.25) -- ++(0,1.25) ++(0,0.25) -- ++(0,1.25);

\draw (8,0) rectangle +(0.25,0.25)  (9,1.5) rectangle +(0.25,0.25);
\draw (9,0) rectangle +(0.25,0.25);
\draw[fill=black] (10.75,0) rectangle +(0.25,0.25);
\draw[fill=gray] (10.75,0.25) rectangle +(0.25,2.75);

\draw (6.8,0.12) node{\footnotesize $x$} ++(0,1.5) node{\footnotesize $a'$};
\draw (8.12,3.2) node{\footnotesize $b$} ++(1,0) node{\footnotesize $b'$} ++(1.75,0) node{\footnotesize $y$};
\end{tikzpicture}

\caption{The Cauchon condition on the diagram guarantees a unique nearest white square.}
\label{fig:unique chain}
\end{figure}

Now suppose $(x,y)$ is a black square, so at least one of row $x$ or column $y$ is all black in $C$.  If both are all black then any white squares will be strictly northwest of $(x,y)$ and the previous argument applies.  Otherwise, suppose there is at least one white square in row $x$ (the argument for column $y$ is symmetric), and let $(x,b)$ be the white square in this row such that $|y-b|$ is minimised.  If there is another white square $(a',b')$ anywhere in the diagram with $|y-b'|<|y-b|$, then the Cauchon property implies that $(x,b')$ is also white: a contradiction to our choice of $b$.
\end{proof}

We will refer to the square $(a,b)$ constructed in Lemma~\ref{res:nearest white square in chain} as the \textbf{white square nearest to $(x,y)$}.  It is clear from the construction that the step from $(x,y)$ to its nearest white square will always have the pattern of black squares depicted in Figure~\ref{fig:special case of lacunary condition}.  This is similar to the lacunary sequence construction used in \cite{LL1}; see Remark~\ref{rem: lacunary sequences vs chain rooted at} below.

\begin{figure}[h!]

\begin{tikzpicture}

\draw[gray,fill=gray] (0,1) rectangle (3,1.75) (1.5,1.75) rectangle (3,3);

\draw rectangle (4,3);
\draw (3,0.75) rectangle +(0.25,0.25) (1.25,1.75) rectangle +(0.25,0.25);

\draw[-] (0,1) -- (3,1) -- (3,3) (0,1.75)--(1.25,1.75) (1.5,2)--(1.5,3);

\draw (-0.2,0.87) node{\footnotesize $x$} ++(0,1) node{\footnotesize $a$};
\draw (1.37,3.2) node{\footnotesize $b$} ++(1.75,0) node{\footnotesize $y$};

\end{tikzpicture}

\caption{The step between any square and its nearest white square guarantees that all squares in the shaded region will be black.}
\label{fig:special case of lacunary condition}
\end{figure}
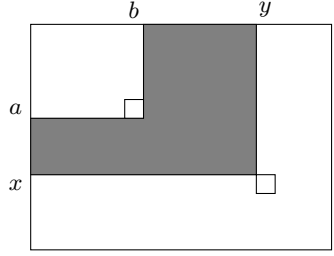

The following definition is a restatement of the definition originally introduced by Oh in \cite{Oh}.

\begin{definition}\label{def:chain rooted at}
Let $(x,y)$ be a square in a Cauchon diagram $C$ such that there is at least one white square weakly northwest of $(x,y)$.  The \textbf{chain rooted at $(x,y)$} is the sequence $(x_1,y_1),\dots,(x_t,y_t)$ constructed as follows:
\begin{itemize}
\item Initial step:
\begin{itemize}
\item If $(x,y)$ is a black square, set $(x_1,y_1)$ to be the unique nearest white square weakly northwest of $(x,y)$.
\item If $(x,y)$ is a white square, $(x_1,y_1) := (x,y)$.
\end{itemize}
\item For $i \geq 1$: 
\begin{itemize}
\item If there are no white squares strictly northwest of $(x_i,y_i)$, then $t:=i$ and the sequence terminates.
\item Otherwise, take $(x_{i+1},y_{i+1})$ to be the nearest white square strictly northwest of $(x_i,y_i)$.
\end{itemize}
\end{itemize}
Finally, if $(x,y)$ is a black square and there is no white square weakly northwest of it, define the chain rooted at $(x,y)$ to be the empty chain.
\end{definition}

The chain $(x_1,y_1),\dots, (x_t,y_t)$ rooted at square $(x,y)$ naturally corresponds to the $t\times t$ quantum minor $[x_t, \dots, x_1|y_t,\dots, y_1]_q$.  Recall from \eqref{eq:translate minors and plucker coordinates} that the corresponding Pl\"ucker coordinate has columns indexed by the $m$-subset
\[\Big(\interval{1}{m}\backslash\{x_1, \dots, x_t\}\Big) \cup \big\{w_{n}^o(y_i) + m: 1 \leq i \leq t\big\}.\]

An important use of the chain construction in Definition~\ref{def:chain rooted at} is given in the following theorem.

\begin{proposition}\label{res:oh grassmann necklace computation} \cite[Proposition 14]{Oh}
Let $C$ be an $m \times n$ rectangular Cauchon diagram, and $\II = (I_1, \dots, I_{m+n})$ the associated Grassmann necklace.  Label the boxes along the bottom row of $C$ with the numbers $\interval{1}{n}$ from left to right, and the boxes in the rightmost column with $\interval{n}{m+n-1}$ from bottom to top.  Then 
\[I_{m+n-k+1} = \Big(\interval{1}{m} \backslash \{x_1, \dots, x_t\}\Big) \cup \{w_n^o(y_i) + m: 1 \leq i \leq t\},\] 
where $(x_1,y_1), \dots, (x_t,y_t)$ is the chain rooted at box $k$ for $k \in \interval{1}{m+n-1}$, and $I_1:= \interval{1}{m}$.
\end{proposition}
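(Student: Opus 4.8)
The plan is to establish the two facts implicit in the statement: that the construction of Definition~\ref{def:chain rooted at} is well-defined and produces a sequence satisfying the axioms of Definition~\ref{def:grassmann necklace}, and that this sequence is precisely the Grassmann necklace attached to the cell $S_C$. (The result is \cite[Proposition~14]{Oh}; reproving it here is worthwhile because the chain machinery and its properties are used repeatedly in \S\ref{s:permutation version}.)

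Well-definedness is immediate from Lemma~\ref{res:nearest white square in chain}: each step passes to the nearest white square strictly (or, for the initial black step, weakly) northwest of the current square, which exists and is unique whenever the relevant region is not entirely black. After the initial step both coordinates strictly decrease, so the chain terminates after at most $\min(m,n)$ steps; it is empty exactly when $(x,y)$ is black and no white square lies weakly northwest of it, in which case the displayed formula returns $\interval{1}{m}$.

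To see that the resulting sequence $\widetilde{\II} = (\widehat I_1, \dots, \widehat I_{m+n})$ is a Grassmann necklace, I would compare the chains rooted at consecutive boundary boxes. The axiom of Definition~\ref{def:grassmann necklace} at position $j = m+n-k$ relates $\widehat I_{m+n-k}$ (attached to box $k+1$) to $\widehat I_{m+n-k+1}$ (attached to box $k$), the element tested for membership being $j$ itself; the boundary cases relating $\widehat I_1 = \interval{1}{m}$ to $\widehat I_2$ and to $\widehat I_{m+n}$ are handled separately. The key is a local claim: passing from box $k+1$ to the adjacent box $k$ (one square left in the bottom row, or one square up in the rightmost column) changes the rooted chain only by possibly deleting or altering its first entry. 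Granting this, the two cases of Definition~\ref{def:grassmann necklace} fall out after translating through the dictionary \eqref{eq:translate minors and plucker coordinates}, noting that the tested element $j$ is a column index in $\interval{m+1}{m+n-1}$ when box $k$ lies in the bottom row and a row index in $\interval{2}{m}$ when it lies in the rightmost column. The local claim is itself proved by the ``Cauchon condition forces this square white'' type of argument used for Lemma~\ref{res:nearest white square in chain}, together with the forced black region of Figure~\ref{fig:special case of lacunary condition}.

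The substantive step, and the one I expect to be the main obstacle, is matching $\widetilde{\II}$ with the Grassmann necklace of $S_C$. Since $\widetilde{\II}$ is a Grassmann necklace with $\widehat I_1 = \interval{1}{m}$, Postnikov's bijection \cite[Theorem~17.1]{Postnikov} and Remark~\ref{rem:GN of matrix cells} identify it as the necklace of $S_{C'}$ for some $m \times n$ rectangular Cauchon diagram $C'$, so it suffices to prove $C' = C$. One approach is to compare vanishing sets: by Theorem~\ref{res:oh's result on minors in a cell via GN} the Pl\"ucker coordinates vanishing on $S_{C'}$ are the $\Delta_T$ with $\widehat I_i \not\leq_i T$ for some $i$, while by Theorem~\ref{res:minors in H primes and TNN cells agree} those vanishing on $S_C$ are the coordinates whose quantum minors lie in the $\HH$-prime with diagram $C$; matching these two lists against the classical (deleting-derivations) criterion for a quantum minor to vanish on a given Cauchon diagram then forces $C'$ and $C$ to have the same colouring square by square. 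An alternative, which I would try first since it avoids redeveloping the combinatorics of vanishing minors, is to reinterpret the chain rooted at box $k$ directly inside Postnikov's wiring diagram for $C$ --- the chain ought to trace out exactly the strands determining the relevant values of the decorated permutation of $C$ --- after which the proposition reduces to the standard expression for a cell's Grassmann necklace in terms of its decorated permutation.
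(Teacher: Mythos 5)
The paper offers no proof of this proposition: it is imported wholesale as \cite[Proposition~14]{Oh}, with the only original content being the remark about the reversed indexing convention. So there is no ``paper's proof'' to line your argument up against, and any complete proof you supply would necessarily be a different route. Unfortunately the route you sketch contains a real error.

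Your ``local claim'' --- that passing from box $k+1$ to the adjacent box $k$ ``changes the rooted chain only by possibly deleting or altering its first entry'' --- is false, and the failure is visible already in Example~\ref{ex:computing a GN via chains}. In that $3\times 4$ diagram the chain rooted at box~$3$ is $(3,3),(1,2)$ while the chain rooted at box~$2$ is $(3,2),(1,1)$: \emph{both} entries change, not just the first. The mechanism is clear once you look at it: the ``nearest white square strictly northwest'' selected at each stage is determined greedily relative to the current square, so moving the root by one box can displace every later step of the chain. The resulting Pl\"ucker sets still satisfy Definition~\ref{def:grassmann necklace} (here $I_5 = \{2,5,6\}$ and $I_6 = \{2,6,7\}$ do differ by a single swap), but this is \emph{not} because the chains themselves agree except in the first link; the necklace property emerges only after passing through the map to $m$-subsets, where the cascading changes in the chain largely cancel. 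Your proposed derivation of the necklace axiom from the local claim, and the lemma you cite (Lemma~\ref{res:nearest white square in chain} / Figure~\ref{fig:special case of lacunary condition}) as its justification, therefore do not go through. A correct argument has to compare the two $m$-subsets directly, or phrase the claim at the level of the image sets rather than the chains.

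The final step of your outline (identifying $\widetilde{\II}$ with the necklace of $S_C$) is also left at the level of a plan. The vanishing-set comparison would require redeveloping, in effect, the deleting-derivations description of which minors lie in the $\HH$-prime of $C$ --- a nontrivial body of work that the paper deliberately sidesteps by citing Oh and Theorem~\ref{res:minors in H primes and TNN cells agree}. The decorated-permutation route is more promising and is closer in spirit to what \S\ref{s:permutation version} actually does with these chains (Proposition~\ref{res:chains in terms of permutations} and Corollary~\ref{res:arbitrary rectangle cauchon diagram, chain rooted at to permutation} translate the chain rooted at a boundary box into the image of an interval under $v$), but as written it is a gesture rather than an argument. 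Until the local claim is replaced by a correct statement and the identification with $S_C$ is carried out, the proposal does not constitute a proof.
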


Note that the numbering of the boxes along the southeast border ran in the opposite direction in the version of Proposition~\ref{res:oh grassmann necklace computation} stated in \cite{Oh}; this is why the chain rooted at box $k$ corresponds to the Grassmann necklace term $I_{m+n-k+1}$ in our setting.  The reason for this change of direction is to simplify the notation in \S\ref{s:permutation version} below, where we will associate the chain rooted at box $k$ to the image of the interval $\interval{1}{k}$ under a certain permutation.

\begin{example}\label{ex:computing a GN via chains}
We compute the Grassmann necklace of the following Cauchon diagram:
\begin{equation*}
\begin{tikzpicture}[scale=0.75]
\draw[step=1] grid (4,3);
\draw[fill=black] (0,0) rectangle +(1,1) (0,1) rectangle +(1,1) (1,1) rectangle +(1,1) (2,1) rectangle +(1,1) (2,2) rectangle +(1,1);
\node[white] at (0.5,0.5) {$1$};
\node at (1.5,0.5) {$2$};
\node at (2.5,0.5) {$3$};
\node at (3.5,0.5) {$4$};
\node at (3.5,1.5) {$5$};
\node at (3.5,2.5) {$6$};

\node[gray] at (-0.3,2.5) {\footnotesize $1$};
\node[gray] at (-0.3,1.5) {\footnotesize $2$};
\node[gray] at (-0.3,0.5) {\footnotesize $3$};

\node[gray] at (0.5,3.3) {\footnotesize $1$};
\node[gray] at (1.5,3.3) {\footnotesize $2$};
\node[gray] at (2.5,3.3) {\footnotesize $3$};
\node[gray] at (3.5,3.3) {\footnotesize $4$};

\end{tikzpicture}
\end{equation*}
The chains rooted at boxes $1$ through $6$ are:
\begin{align*}
1&: (1,1)\\
2&: (3,2), (1,1)\\
3&: (3,3), (1,2)\\
4&: (3,4), (1,2)\\
5&: (2,4), (1,2)\\
6&: (1,4)
\end{align*}
Converting these to the Pl\"ucker coordinate notation as in Proposition~\ref{res:oh grassmann necklace computation}, we get the Grassmann necklace
\[\II = (123,234,346,246,256,267,238).\]
\end{example}

\begin{remark}\label{rem: lacunary sequences vs chain rooted at}
We can think of the chain rooted at $(x,y)$ as a kind of ``reverse lacunary sequence'': compare Definition~\ref{def:chain rooted at} to \cite[Definition~3.1]{LL1}.  The main differences between the two are:
\begin{itemize}
\item A lacunary sequence constructed from a black square defines a minor which belongs to the $\HH$-prime corresponding to that diagram; this is not true for a chain rooted at a black square, since the chain starts from the nearest white square instead.
\item Lacunary sequences from white squares and chains rooted at white squares always define minors which are not in the $\HH$-prime (this is easily seen using the first author's results in \cite[Theorem~5.6]{Casteels1}, since there is an obvious vertex-disjoint path system in each case.)
\item Lacunary sequences from a given square need not be unique, while the chain rooted at a square is always unique.
\end{itemize}
\end{remark}
\begin{example}\label{ex:lacunary sequences fail in sl4}
The following example illustrates why the language of lacunary sequences on its own may not be sufficient to verify that a set $E_K$ has the desired properties.  Minors defined by lacunary sequences are used in \cite{LL1} to give efficient positivity tests for the corresponding TNN cells, but do not give a characterisation of all minors within a cell.  The interested reader is referred to \cite[Definition~3.1]{LL1} for the definition of lacunary sequences.

Let $K$ and $L$ be the $\HH$-primes in $\OO_q(M_{4,4})$ corresponding to the following Cauchon diagrams:

\begin{center}
\begin{tikzpicture}[scale=0.5]
\draw[step=1] grid (4,4);
\draw[fill=black] (3,1) rectangle ++(1,3);
\draw (2,-1) node {$K$};

\draw[step=1] (7,0) grid ++(4,4);
\draw[fill=black] (8,3) rectangle ++(2,1);
\draw (9,-1) node{$L$};
\end{tikzpicture}
\end{center}
Using the formulas in \cite[Definition~2.6]{GLL1} to compute a full list of quantum minors in each $\HH$-prime, we find that the only quantum minors in $L \backslash K$ are $[12|23]_q$ and $[124|234]_q$.  However, neither of these minors are defined by lacunary sequences in $L$.  As a result, no matter how we define the set $E_K$, the lacunary sequences will not be able to ``see'' that $E_K \cap L \neq \emptyset$.

We further note that if we construct $E_K$ as in Theorem~\ref{res:GN constructs the Ore set we need}, we have $E_K \cap L = \{[12|23]_q\}$ (corresponding to the chain rooted at box (2,4) of the Cauchon diagram of $K$, i.e. Grassmann necklace term $I_3$), and in fact $[12|23]_q$ is not a lacunary sequence in $K$ either.
\end{example}

\section{Denominator sets via restricted permutations}\label{s:permutation version}

Thus far we have worked mostly in the settings of ring theory and combinatorics.  There is a third perspective on this question, however, which is the study of $\HH$-primes via representation theory and the language of quantum groups (e.g. Brown, Hodges-Levasseur, Joseph, Yakimov; see bibliography of \cite{Yakimov2}).

In \cite{FryerYakimov} the second author and Yakimov studied the problem of realising the $Z_{JK}$ from \eqref{eq:first def of ZJK} as centres of localizations for a more general class of algebras.  Using the language of quantum groups and Demazure modules, they constructed separating Ore sets for all quantum function algebras $\OO_q(G)$ on complex simple groups $G$, and all quantum Schubert cell algebras $\mathcal{U}^{-}[w]$ coming from symmetrizable Kac-Moody algebras.

The results of \cite{FryerYakimov} do not lend themselves well to explicit computation, however.  In this section we show that in the special case of $\OO_q(M_{m,n})$, the denominator sets constructed in \cite[Main Theorem]{FryerYakimov} and in Theorem~\ref{res:we have constructed ore sets yay} agree with each other, thus providing a new perspective on the results of \cite{FryerYakimov}.

We first need to introduce some new notation.  Recall that $q \in \KK^{\times}$ is not and will never be a root of unity.

\subsection{$\HH$-primes via restricted permutations}\label{ss: background and notation for restricted permutations}

Fix integers $m,n \geq 2$.  If $I \subset\interval{1}{m+n}$, recall that the projections $p_1$ and $p_2$ of $I$ are defined to be
\[p_1(I) = I \cap \interval{1}{m}, \qquad p_2(I) = I \cap \interval{m+1}{m+n}.\]

There is a bijection between $\HH$-$spec(\OO_q(M_{m,n}))$ and the subset of $S_{m+n}$ defined by
\begin{equation}\label{eq:restricted perms bruhat order}\mathcal{S} = \{\sigma \in S_{m+n} : \sigma \preccurlyeq c^m\},\end{equation}
where $\preccurlyeq$ denotes the strong Bruhat order, and $c$ is the Coxeter element in $S_{m+n}$.  These are also known as \textit{restricted permutations}, since another characterisation of this set is
\[\mathcal{S} = \{\sigma \in S_{m+n}: -n \leq \sigma(i) - i \leq m,\ \forall i \in \interval{1}{m+n}\}.\]
(See \cite[Proposition~1.3]{Launois}, \cite[\S1.3]{GLL1}.)

\begin{remark}
There are many different conventions for associating a permutation (or more generally, a pair of Weyl group elements) to a $\HH$-prime or Cauchon diagram.  We follow the convention in \cite{GLL1,Yakimov1} and use the set \eqref{eq:restricted perms bruhat order} above; this associates to each Cauchon diagram the \textit{inverse} of the permutation used in \cite{Launois,MR1,Postnikov}.  Neither of these should be confused with the \textit{decorated permutation} of \cite{Oh,Postnikov}, which is a related but separate construction.
\end{remark}

In \cite[\S19]{Postnikov}, Postnikov showed that we can obtain the restricted permutation of a $\HH$-prime from its Cauchon diagram via pipe dreams as follows: replace each black square in the diagram with a crossing and each white square with a pair of elbows, and arrange the numbers $1, 2, \dots, m+n$ along both the northwest and southeast borders of the diagram as in Figure~\ref{fig:pipedreamillustration}.  We can then read off the permutation by following the pipes from southeast to northwest along the diagram.

\vspace{-7em} 
\begin{figure}[h!]
\begin{tikzpicture}[scale=0.75]

\draw (0,0) grid (4,3) ;
\draw[fill=black] (0,0) rectangle +(1,2) (1,1) rectangle +(2,1) (2,2) rectangle +(1,1);

\draw[->,thick] (4.5,1.5) -- (8,1.5);

\bsq{(5.75,1.8)}
\wsq{(5.75,0.2)}

\draw (9,0) grid (13,3);
\foreach \x in {(9,0),(9,1),(10,1),(11,1),(11,2)}
	{\bsq{\x}; }

\foreach \x in {(9,2),(10,2),(10,0),(11,0),(12,0),(12,1),(12,2)}
	{\wsq{\x};}

\foreach \x in {1,...,4}
{
	\draw (\x,-0.3) +(8.5,0) node{\tiny \x};
}
\foreach \x in {4,...,7}
{
	\draw (\x,3.3) +(5.5,0) node{\tiny \x};
}

\foreach \y in {1,...,3}
{
	\draw (8.7,\y) +(0,-0.5) node{\tiny \y};
}

\foreach \y in {5,...,7}
{
	\draw (13.3,\y) +(0,-4.5) node{\tiny \y};
}

\end{tikzpicture}
\caption{Computing the permutation of a diagram using pipe dreams.}
\label{fig:pipedreamillustration}
\end{figure}
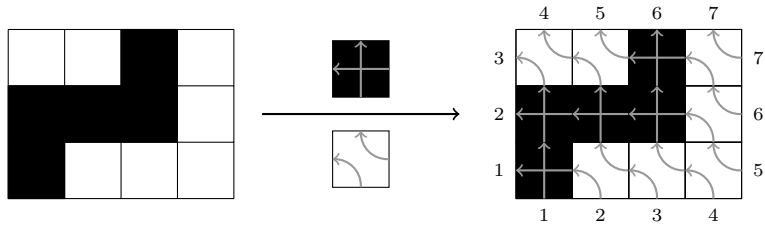

In the example given in Figure~\ref{fig:pipedreamillustration}, we obtain the permutation $\big(\begin{smallmatrix}1 & 2 & 3 & 4 & 5 & 6 & 7 \\ 3 & 1 & 4 & 6 & 2 & 5 & 7 \end{smallmatrix}\big)$.

We can also think of restricted permutations in terms of reduced words in $S_{m+n}$ as follows.  Let $s_i$ denote the elementary transposition $(i\ i+1)$, and associate to each square in the Cauchon diagram an elementary transposition according to the following rule: put $s_1$ in the bottom left corner box, and if a box already contains the transposition $s_i$ then put $s_{i+1}$ in the box directly above it and the box directly to its right (if they exist).  For example, if $C$ was a $3\times 4$ diagram then we would assign the transpositions to squares as illustrated in Figure~\ref{fig:how to assign elementary transpositions}.

\begin{figure}
\begin{tikzpicture}[scale=0.75]
\draw grid (4,3);
\draw (0,0) + (0.5,0.5) node{$s_1$};
\draw (0,1) + (0.5,0.5) node{$s_2$};
\draw (0,2) + (0.5,0.5) node{$s_3$};
\draw (1,0) + (0.5,0.5) node{$s_2$};
\draw (1,1) + (0.5,0.5) node{$s_3$};
\draw (1,2) + (0.5,0.5) node{$s_4$};
\draw (2,0) + (0.5,0.5) node{$s_3$};
\draw (2,1) + (0.5,0.5) node{$s_4$};
\draw (2,2) + (0.5,0.5) node{$s_5$};
\draw (3,0) + (0.5,0.5) node{$s_4$};
\draw (3,1) + (0.5,0.5) node{$s_5$};
\draw (3,2) + (0.5,0.5) node{$s_6$};
\end{tikzpicture}
\caption{Assigning an elementary transposition to each square in a $3\times 4$ diagram.}
\label{fig:how to assign elementary transpositions}
\end{figure}

Let $w$ denote the word obtained by reading off all of these transpositions from left to right, top to bottom.  (This is exactly the permutation $c^m$ in \eqref{eq:restricted perms bruhat order}.)  For example, in the $3 \times 4$ case we have
\[w = (s_3s_4s_5s_6)(s_2s_3s_4s_5)(s_1s_2s_3s_4).\]
The restricted permutation $v$ associated to a given Cauchon diagram $C$ is the subword of $w$ obtained by removing from $w$ all $s_i$ which appear in white squares of $C$.  See \eqref{eq:example of restricted permutation} below for an example.

If $v$ is the permutation associated to $C$ and $(x,y)$ is a square in row $x$ and column $y$ of $C$, we define the permutation $v_{x,y}$ to be the subword of $v$ obtained by colouring white all squares of $C$ which are strictly below row $x$ or strictly to the right of column $y$, and then reading off the resulting permutation from the diagram as above.

To clarify: when reading the transpositions from the Cauchon diagram as described above, we write them down from left to right and compose them from right to left, so the transposition nearest the bottom right of the diagram is always the first to be applied.  For the Cauchon diagram in Figure~\ref{fig:pipedreamillustration}, we therefore obtain
\begin{equation}\label{eq:example of restricted permutation}v = s_5s_2s_3s_4s_1 = \big(\begin{smallmatrix}1 & 2 & 3 & 4 & 5 & 6 & 7 \\ 3 & 1 & 4 & 6 & 2 & 5 & 7 \end{smallmatrix}\big).\end{equation}


In \S\ref{s:grassmann necklaces}, we numbered the rows of a Cauchon diagram with $\interval{1}{m}$ from top to bottom, and the columns with $\interval{1}{n}$ from left to right: this is the standard convention.  In order to make the proofs in this section more readable, we introduce the following new numbering for the rows and columns of a Cauchon diagram $C$ (see also Figure~\ref{fig:pipedreamnumbering}):
\begin{itemize}
\item Number the rows of $C$ with $\interval{1}{m}$ from bottom to top.
\item Number the columns of $C$ with $\interval{m+1}{m+n}$ from left to right.
\end{itemize}
This is because we will be using the image of certain intervals under restricted permutations in order to construct minors, and this convention matches the pipedream numbering in Figure~\ref{fig:pipedreamillustration}.

\begin{figure}[b]
\begin{tikzpicture}
\draw[light-gray,dotted,step=0.5] grid (4,3);
\draw rectangle (4,3);
\draw (-0.3,0.25) node{\footnotesize 1};
\draw (-0.3,0.75) node{\footnotesize 2};
\draw (-0.3,1.8) node{$\vdots$};
\draw (-0.3,2.75) node{\footnotesize $m$};
\node[rotate=-90] at (0.25,3.6) {\footnotesize $m+1$};
\node[rotate=-90] at (0.75,3.6) {\footnotesize $m+2$};
\draw (2.25,3.6) node{$\dots$};
\node[rotate=-90] at (3.75,3.6) {\footnotesize $m+n$};
\end{tikzpicture}

\caption{Numbering the rows and columns of the Cauchon diagram according to pipedream convention.}
\label{fig:pipedreamnumbering}
\end{figure}
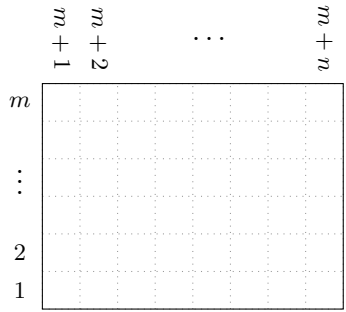

The rows and columns of quantum minors will continue to be expressed in the standard convention; the reader can easily translate between the two by applying $w_m^o$ to the row set and adding or subtracting $m$ from each term in the column set as required (see also Figure~\ref{fig:translate minors pipedreams}).

\subsection{From restricted permutations to chains}\label{ss:res perms to chains}
In this section we will prove Theorem~\ref{res: denominator sets agree, intro version}, i.e. that the Ore sets in \cite[Main Theorem]{FryerYakimov} restricted to the case $\OO_q(M_{m,n})$ agree with those constructed via Grassmann necklaces in Theorem~\ref{res:we have constructed ore sets yay} above.

We first state the relevant part of \cite[Main Theorem]{FryerYakimov}, translated into the language of this paper.
\begin{theorem}\label{res:sian-milen thm}
Let $\KK$ be an infinite field of arbitrary characteristic and $q \in \KK^{\times}$ not a root of unity.  Fix a $\HH$-prime $K$ in $\OO_q(M_{m,n})$, let $v$ be the restricted permutation associated to $K$, and define $I_k:=v\interval{1}{k}$ for $k \in \interval{1}{m+n-1}$.  The the multiplicative set generated by the following minors:
\begin{gather*}
\Big[w_m^o\big(p_1(I_k)\big)\Big|\interval{m+1}{m+k}\backslash p_2(I_k) - m\Big]_q \quad \text{for } k \in \interval{1}{n},\\
\Big[w_m^o\big(p_1(I_k)\backslash \interval{1}{k-n}\big)\Big| \interval{m+1}{m+n} \backslash p_2(I_k) - m\Big]_q \quad \text{for } k \in \interval{n+1}{m+n-1},
\end{gather*}
is a separating Ore set for $K$ in $\OO_q(M_{m,n})$.
\end{theorem}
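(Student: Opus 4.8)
The plan is to obtain Theorem~\ref{res:sian-milen thm} by specialising \cite[Main Theorem]{FryerYakimov}; the content of the proof is thus a translation between the quantum-group language of \cite{FryerYakimov} and the explicit quantum minors of $\OO_q(M_{m,n})$, not a fresh argument. First I would invoke the standard identification of $\OO_q(M_{m,n})$ with the quantum Schubert cell algebra $\mathcal{U}^-[c^m]$ in type $A_{m+n-1}$ (the element $c^m$ is precisely the word $w$ built box-by-box from the Cauchon diagram in \S\ref{ss: background and notation for restricted permutations}), under which the $\HH$-primes match the restricted permutations $v \preccurlyeq c^m$ of \eqref{eq:restricted perms bruhat order}. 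Since $\mathcal{U}^-[c^m]$ is one of the algebras covered by \cite{FryerYakimov}, their Main Theorem immediately supplies a separating Ore set for $K$; it remains to identify its generators.

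Next I would make the generators explicit. The separating Ore set of \cite{FryerYakimov}, for the $\HH$-prime attached to $v$, is generated (up to nonzero scalars) by one Demazure-type element per simple reflection occurring in $c^m$, i.e. one for each fundamental weight $\varpi_k$ of $\mathfrak{sl}_{m+n}$ with $k \in \interval{1}{m+n-1}$; the $k$-th such element is, after the usual rewriting of quasi-$\mathcal{R}$-matrix entries as matrix coefficients, a generalised minor of $V(\varpi_k)$ whose ``moving'' extremal weight is $v\varpi_k$. Identifying $V(\varpi_k)$ with $\wedge^k(\KK^{m+n})$ --- whose extremal weights are indexed by the $k$-subsets of $\interval{1}{m+n}$ --- the weight $v\varpi_k$ corresponds to $I_k := v\interval{1}{k}$. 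Transporting this generalised minor through the quantum Schubert cell isomorphism (equivalently, descending the corresponding minor of $\OO_q(M_{m+n})$ along the projection exhibiting $M_{m,n}$ as the big cell of $Gr(m,m+n)$) and performing a quantum Laplace expansion along the two diagonal identity blocks, the rows and columns forced into those blocks are deleted. The surviving minor has row set a $w_m^o$-image of $p_1(I_k)$ and column set a complement of $p_2(I_k)$ inside a suitable subinterval, once one also accounts for the translation between the pipedream and standard numberings (the $w_m^o$ on rows, the shift by $m$ on columns).

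The shape of the surviving minor --- and hence the two-case statement --- is then pinned down by the defining inequalities $-n \leq v(i) - i \leq m$ (equivalently $-m \leq v^{-1}(j) - j \leq n$). For $k \in \interval{1}{n}$, the bound $v(i) \leq i+m \leq m+k$ for $i \leq k$ forces $I_k \subseteq \interval{1}{m+k}$, so $p_2(I_k) \subseteq \interval{m+1}{m+k}$ and the relevant column subinterval is exactly $\interval{m+1}{m+k}$; this gives the first displayed minor. For $k \in \interval{n+1}{m+n-1}$, the whole interval $\interval{m+1}{m+n}$ is available, but now $v^{-1}(j) \leq j+n \leq k$ for $j \leq k-n$ forces $\interval{1}{k-n} \subseteq p_1(I_k)$; these rows lie in the identity block and are deleted, leaving $w_m^o\big(p_1(I_k)\backslash\interval{1}{k-n}\big)$ and yielding the second displayed minor. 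A size count using $|p_1(I_k)| + |p_2(I_k)| = k$ confirms both minors are square, so they are well-formed. Since rescaling a generator changes neither the multiplicative set it generates, nor the Ore and denominator properties, nor which $\HH$-primes it meets, the set generated by the two displayed families is, up to scalars, the one from \cite{FryerYakimov}, and is therefore a separating Ore set for $K$.

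The main obstacle is the faithful bookkeeping in the middle step: one must fix \cite{FryerYakimov}'s conventions and reconcile them with the several incompatible $\HH$-prime/Weyl-group dictionaries in the literature (as flagged in the Remark after \eqref{eq:restricted perms bruhat order}), track the quantum Schubert cell isomorphism precisely enough to know exactly which minor of $\OO_q(M_{m,n})$ each generalised minor becomes, and keep the pipedream-versus-standard numbering straight throughout. Once the conventions are nailed down the rest --- the Laplace expansion, the size count, the scalar-invariance of the Ore conditions --- is routine.
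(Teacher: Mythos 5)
Your proposal is correct and takes essentially the same approach as the paper: both specialise the Main Theorem of \cite{FryerYakimov} to the quantum Schubert cell algebra $\mathcal{U}^-[c^m]$ via Yakimov's isomorphism with $\OO_q(M_{m,n})$, and both identify the generators as the images of the elements $d_{v,\varpi_k}$ with moving extremal weight $v\varpi_k$ indexed by $I_k = v\interval{1}{k}$. The one difference is that the paper simply cites \cite[Lemma 4.3]{Yakimov1} and equation (2.4) of \cite{FryerYakimov} for the explicit translation into quantum minors, whereas you supply the intermediate bookkeeping (Laplace expansion along the identity block, the restricted-permutation inequalities forcing the two-case shape) yourself; your version is thus a more self-contained account of the same argument.
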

\begin{proof}
We refer the reader to \cite{FryerYakimov,Yakimov1} for notation and definitions.  The Ore sets in \cite[Main Theorem]{FryerYakimov} are constructed for general quantum Schubert cell algebras $\mathcal{U}^{-}[w]$; we obtain an algebra isomorphic to $\OO_q(M_{m,n})$ by restricting to the special case where $\mathcal{U}^{-}$ is the negative part of $\mathcal{U}_q(\mathfrak{sl}_{m+n})$ and $w = c^m$, i.e. the $m$th power of the Coxeter element $c = (12\dots m+n)$ \cite[Lemma 4.1]{Yakimov1}.  Further, \cite[Lemma~4.3]{Yakimov1} gives an explicit map which translates between the two settings.

Now the quantum minors in the statement of the theorem are exactly the image in $\OO_q(M_{m,n})$ of the elements $\{d_{v,\varpi_k} : 1 \leq k \leq m+n-1\}$ from \cite[Main Theorem]{FryerYakimov}, where $\varpi_k$ denotes the fundamental weight $(\underbrace{1,1,\dots, 1}_{k},0, \dots,0)$.  The fact that these generate the whole Ore set in \cite[Main Theorem]{FryerYakimov} follows directly from equation (2.4) of \cite{FryerYakimov}.
\end{proof}

We will relate the sets $v\interval{1}{k}$ from Theorem~\ref{res:sian-milen thm} to the chains rooted at boxes along the south-east border of the corresponding Cauchon diagram, and hence show that they define the same minors as the Grassmann necklace.

To do this, we first show that we can recover the chain rooted at $(x,y)$ from $v_{x,y}$ and $w_{x,y}$, i.e. the subwords of $v$ and $w$ obtained by ignoring any squares not weakly northwest of $(x,y)$.  This approach is inspired by a similar technique used by Talaska and Williams in \cite{TW1}.

\begin{proposition}\label{res:chains in terms of permutations}
Let $C$ be an $m\times n$ Cauchon diagram, let $(x,y)$ be any square in $C$, and let $(x_1,y_1),(x_2,y_2),\dots, (x_t,y_t)$ be the chain rooted at $(x,y)$ with respect to the numbering convention in Figure~\ref{fig:pipedreamnumbering}.  If $v$ is the permutation associated to $C$, $w$ the permutation associated to the $m\times n$ diagram with all black squares, and $v_{x,y}$, $w_{x,y}$ the corresponding permutations obtained by restricting to the diagram weakly northwest of $(x,y)$, then
\begin{equation}\label{eq:final form of I for chain/permutation result}v_{x,y}w_{x,y}^{-1}\interval{1}{m} = \Big(\interval{1}{m}\backslash \{x_1,\dots,x_t\}\Big) \cup \{y_t,\dots, y_1\}.\end{equation}
\end{proposition}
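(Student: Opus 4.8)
The plan is to argue by induction on the number of squares weakly northwest of $(x,y)$, peeling off the pipedream structure one column (or row) at a time and tracking how the permutation $v_{x,y}w_{x,y}^{-1}$ acts on $\interval{1}{m}$. First I would set up the base case: if the region weakly northwest of $(x,y)$ contains no white square, then the chain rooted at $(x,y)$ is empty, $v_{x,y} = w_{x,y}$, so $v_{x,y}w_{x,y}^{-1} = \mathrm{id}$ and the right-hand side of \eqref{eq:final form of I for chain/permutation result} collapses to $\interval{1}{m}$, as required. More usefully, it is worth first handling the case where $(x,y)$ is the \emph{bottom-right} square of $C$, since (as in the proof of Lemma~\ref{res:nearest white square in chain}) deleting rows below $x$ or columns right of $y$ changes neither the chain nor the two restricted permutations; this reduces the general statement to the case $v_{x,y} = v$, $w_{x,y} = w$.

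The heart of the argument is to understand $vw^{-1}$ directly from the pipedream. Reading $w$ off the all-black $m\times n$ diagram gives $w = c^m$, and $w^{-1}\interval{1}{m}$ should be computed explicitly: following the pipes in the all-black diagram, $w$ is the permutation sending the southeast labels to the northwest labels via full crossings, so $w^{-1}\interval{1}{m} = \interval{1}{m}$ when there are at least as many rows as the relevant shift permits — in general one gets $w^{-1}\interval{1}{m}$ to be the set of southeast-border labels of the $m$ pipes entering the top-left. Then $v$ is obtained from $w$ by turning the white squares of $C$ into elbows. The key combinatorial claim is: tracing the $m$ pipes that exit along the north border of $C$ back to where they enter (south or east border), a pipe is ``deflected'' from its all-black trajectory precisely when it first meets a white square, and the chain rooted at $(x,y)$ records exactly the sequence of white squares at which the single pipe that would otherwise exit through position $(x,y)$'s column gets rerouted. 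Concretely, I would prove by induction on rows (processing the Cauchon diagram from bottom to top) that after incorporating rows $1, \dots, x$ and columns $m+1, \dots, y$, the set $v_{x,y}w_{x,y}^{-1}\interval{1}{m}$ has had exactly the elements $x_1, \dots, x_t$ removed (these are the rows whose pipe got captured by a white square of the chain) and the elements $y_1, \dots, y_t$ inserted (the columns where those captures happened). The unique-nearest-white-square property from Lemma~\ref{res:nearest white square in chain} and the black-square pattern of Figure~\ref{fig:special case of lacunary condition} are what guarantee the pipe is rerouted to precisely $(x_{i+1}, y_{i+1})$ and not somewhere else, so that the induction step is forced.

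The main obstacle I anticipate is bookkeeping the interaction between the two conventions: the pipedream/permutation side uses the numbering of Figure~\ref{fig:pipedreamnumbering} (rows $1,\dots,m$ bottom-to-top, columns $m{+}1,\dots,m{+}n$ left-to-right) while the chain $(x_i,y_i)$ and the eventual minor live in the standard convention, so I must be careful that the $y_i$ appearing on the right-hand side of \eqref{eq:final form of I for chain/permutation result} are column indices in the \emph{shifted} range $\interval{m+1}{m+n}$ — a point the statement is implicitly making by writing $\{y_t, \dots, y_1\}$ as a subset disjoint from (a subset of) $\interval{1}{m}$. The cleanest way to manage this is to phrase the inductive claim entirely in terms of pipes and their endpoint labels, never converting to minors until the very end, and to verify the translation against Example~\ref{ex:computing a GN via chains} (reading the same diagram via its restricted permutation) to be sure signs and shifts line up. Once the pipe-tracking lemma is in place, \eqref{eq:final form of I for chain/permutation result} is immediate, and combined with Proposition~\ref{res:oh grassmann necklace computation} it will identify the Fryer--Yakimov minors with the Grassmann-necklace minors, which is Theorem~\ref{res: denominator sets agree, intro version}.
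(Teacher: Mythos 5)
Your base case and the overall shape of the argument (reduce to the bottom-right square, then induct) are reasonable, but the combinatorial engine you propose does not work, so the induction as described cannot be carried out. The key claim --- that the chain rooted at $(x,y)$ is the sequence of white squares at which the single pipe that would exit through the column of $(x,y)$ in the all-black diagram gets rerouted --- is false. Take the diagram of Figure~\ref{fig:pipedreamillustration} (black squares at $(1,4),(2,4),(2,5),(2,6),(3,6)$ in the numbering of Figure~\ref{fig:pipedreamnumbering}, so $v=s_5s_2s_3s_4s_1$) and let $(x,y)=(1,7)$ be the bottom-right square. The chain is $(1,7),(3,5)$, and indeed $vw^{-1}\interval{1}{3}=v\interval{5}{7}=\{2,5,7\}=\big(\interval{1}{3}\setminus\{1,3\}\big)\cup\{5,7\}$ (note also that $w^{-1}\interval{1}{m}$ is $\interval{n+1}{m+n}$, not $\interval{1}{m}$). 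But the pipe entering at the bottom of column $7$ meets only the white squares $(1,7)$ and $(1,6)$ and exits at the top of column $6$; in fact no pipe of this diagram passes through both $(1,7)$ and $(3,5)$. The identity holds only at the level of the whole family of east-entering pipes (they exit at west row $2$ and north columns $5$ and $7$), and even then your dictionary fails: the pipes exiting through the north border enter at east rows $2$ and $3$, not at rows $x_1=1$, $x_2=3$, so ``$x_1,\dots,x_t$ are the rows whose pipe got captured at a chain square'' is also wrong. Consequently the inductive statement you propose is just the final claim restated rather than an invariant that survives processing one row or one chain step at a time, and Lemma~\ref{res:nearest white square in chain} does not ``force the pipe to be rerouted to $(x_{i+1},y_{i+1})$''.

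Two further points. First, the reduction to the bottom-right square is not as free as in Lemma~\ref{res:nearest white square in chain}: deleting the rows below $x$ changes the bottom-to-top row numbering, the column labels $m+1,\dots,m+n$, and the assignment $s_{a+b-m-1}$ of transpositions, so $v_{x,y}$ and $w_{x,y}$ (which are subwords of $v,w$ in $S_{m+n}$, supported on $s_x,\dots,s_{y-1}$) are shifted copies of, not equal to, the permutations of the truncated diagram; this is repairable (everything shifts by $x-1$ and $\interval{1}{x-1}$ is fixed pointwise), but it must be tracked. Second, for comparison, the paper's proof uses no pipes at all: it computes $w_{x,y}^{-1}\interval{1}{m}=\interval{1}{x-1}\cup\interval{x+y-m}{y}$ from the reduced word and then inducts along the chain, showing that $J_r:=v_{x_r,y_r}^{-1}v_{x,y}w_{x,y}^{-1}\interval{1}{m}$ equals $\big(\interval{1}{x_r}\setminus\{x_1,\dots,x_r\}\big)\cup\interval{x_r+y_r-m}{y_r-1}\cup\{y_r,\dots,y_1\}$; the ``buffer'' interval $\interval{x_r+y_r-m}{y_r-1}$ is precisely the extra data your proposed invariant is missing, and Lemma~\ref{res:nearest white square in chain} enters only through the commutation of the all-black blocks of transpositions at each chain step (Figure~\ref{fig:commuting permutations}). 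A genuinely pipe-theoretic proof would have to track all $m$ east-entering pipes of the restricted region simultaneously, which is a different, and not obviously easier, argument.
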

\begin{proof}
We highlight the following fact, since we will use it repeatedly: the elementary transposition in box $(a,b)$ is $s_{a+b-m-1}$.  Under the row/column convention in Figure~\ref{fig:pipedreamnumbering}, box $(a,b)$ is in the $a$th row up from the bottom of $C$ and the $(b-m)$th column from the left.

Note that if the chain rooted at $(x,y)$ is the empty chain (i.e. all boxes weakly northwest of $(x,y)$ are black) then $v_{x,y} = w_{x,y}$ and the result is certainly true in this case.  So from now on, we will assume that the chain rooted at $(x,y)$ is not empty.

The first step is to compute the action of $w_{x,y}^{-1}$ on $\interval{1}{m}$.  The permutation $w_{x,y}^{-1}$ is given in terms of elementary transpositions by
\begin{equation}\label{eq: permutation w restricted to x,y}w_{x,y}^{-1} = (s_{x+y-m-1}\dots s_x)(s_{x+y-m}\dots s_{x+1})\dots(s_{y-2}\dots s_{m-1})(s_{y-1}\dots s_m).\end{equation}
This can be visualised by colouring all boxes weakly northwest of $(x,y)$ black and taking the inverse pipe-dream, i.e. moving left to right and top to bottom along the pipes.  It follows from \eqref{eq: permutation w restricted to x,y} that $w_{x,y}^{-1}$ fixes $\interval{1}{x-1}$ and acts as $j \mapsto j + y-m$ for $j \in \interval{x}{m}$.  We therefore have
\begin{equation}\label{eq:set for all white diagram}w_{x,y}^{-1}\interval{1}{m} = \interval{1}{x-1} \cup \interval{x+y-m}{y},\end{equation}
which corresponds to the chain rooted at $(x,y)$ on the all-white $m\times n$ diagram.  We now need to show that $v_{x,y}$ ``corrects'' this chain to fit the given diagram $C$.

The action of $v_{x,y}$ can be computed by reading right-to-left, bottom-to-top from square $(x,y)$ in $C$ and applying any transpositions $s_i$ in black squares as we come to them.  Clearly if a square $(a,b)$ and all squares to the left of it in row $a$ are black, this corresponds to the mapping that takes $a+b-m$ to $a$ and fixes everything else.  

In addition, if $(x_r,y_r)$, $(x_{r+1},y_{r+1})$ are two consecutive steps in the chain rooted at $(x,y)$ and we have applied all $s_i$ coming from squares in the Cauchon diagram between $(x_r,y_r)$ and $(x_{r+1},y_{r+1})$, we know from Lemma~\ref{res:nearest white square in chain} that all squares $(a,b)$ with $x_{r+1} < a \leq m$ and $y_{r+1} < b < y_r$ must also be black. We can apply these elementary transpositions to our set immediately (moving right-to-left, bottom-to-top within this block as usual) since they each commute with all transpositions appearing strictly southeast of them.  See Figure~\ref{fig:commuting permutations} for an illustration of this.

\begin{figure}[h!]

\begin{tikzpicture}
\draw[gray,fill=gray] (0,1) rectangle (3,1.75) (1.5,1.75) rectangle (3,2);
\draw[fill=black] (1.5,2) rectangle (3,3);

\draw rectangle (4,3);
\draw (3,0.75) rectangle +(0.25,0.25) (1.25,1.75) rectangle +(0.25,0.25);

\draw[-] (0,1) -- (3,1) -- (3,3) (0,1.75)--(1.25,1.75) (1.5,2)--(1.5,3);

\node at (-0.3,0.8) {\small $x_r$};
\node at (-0.4,1.9) {\small $x_{r+1}$};
\node[rotate=-90] at (1.33,3.4) {\small $y_{r+1}$};
\node[rotate=-90] at (3.15,3.25) {\small $y_r$};
\end{tikzpicture}

\caption{In this situation, we can apply the black block of permutations northeast of $(x_{r+1},y_{r+1})$ once we've applied all permutations coming from the grey area.}
\label{fig:commuting permutations}
\end{figure}
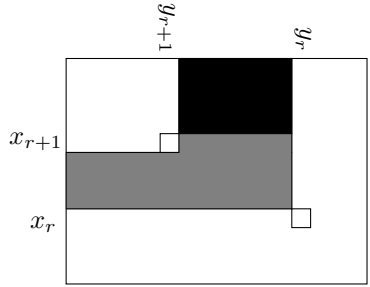

This allows us to make the following definition.  Let $1 \leq r \leq t$, so that $(x_r,y_r)$ is the $r$th step in the chain rooted at $(x,y)$.  By the observation in the previous paragraph, the permutation $v_{x_r,y_r}^{-1}v_{x,y}$ is equal to the subword of $v_{x,y}$ obtained by deleting any $s_i$ coming from squares weakly northwest of $(x_r,y_r)$.  Define $J_r:= v_{x_r,y_r}^{-1}v_{x,y}w_{x,y}^{-1}\interval{1}{m}$; we will prove by induction that
\begin{equation}\label{eq:step for induction in chains proof}J_r = \Big(\interval{1}{x_r}\backslash \{x_1,\dots,x_r\}\Big)\cup \interval{x_r+y_r-m}{y_r-1} \cup \{y_r,\dots, y_1\}\end{equation}
for $1 \leq r \leq t$.

The base step splits into two cases, depending on whether $(x,y)$ is white or black.  If $(x,y)$ is white, then $(x_1,y_1) = (x,y)$ and $J_1 = w_{x,y}^{-1}\interval{1}{m} = \interval{1}{x_1-1} \cup \interval{x_1+y_1-m}{y_1}$ by \eqref{eq:set for all white diagram}.

If $(x,y)$ is black, then $(x_1,y_1)$ is the unique nearest white square weakly northwest of $(x,y)$.  By the observations above, this corresponds to the mapping
\begin{align*}
\interval{x+y-m}{x_1 - 1 + y-m} &\mapsto \interval{x}{x_1-1} & & \text{(any all-black rows)} \\
\interval{x_1+y-m}{y} & \mapsto \interval{x_1 + y_1 - m}{y_1} & & \text{(as in Figure~\ref{fig:commuting permutations})}
\end{align*}
and once again $J_1 = \interval{1}{x_1-1} \cup \interval{x_1+y_1-m}{y_1}$.  In both cases this can be rewritten as $J_1= \Big(\interval{1}{x_1}\backslash\{x_1\}\Big) \cup \interval{x_1+y_1-m}{y_1-1} \cup \{y_1\}$, which agrees with \eqref{eq:step for induction in chains proof} with $r = 1$.

Now assume that $J_r$ is as in \eqref{eq:step for induction in chains proof}, and that $r < t$; we will compute $J_{r+1}$.  The construction of the chain means we don't know anything about the squares in the same row or same column as $(x_r,y_r)$, so the first step is to show that none of the permutations in this row and column can have an effect on $J_r$.  Indeed, consider the permutations to the left of (and in the same row as) $(x_r,y_r)$: they are some subword of $s_{x_r}\dots s_{x_r + y_r - m - 2}$.  Since $J_r \cap \interval{x_r}{x_r+y_r-m-1} = \emptyset$ by the induction hypothesis, this subword has no effect on $J_r$.  Similarly, the permutations above (and in the same column as) $(x_r,y_r)$ form a subword of $s_{y_r-1}\dots s_{x_r + y_r - m}$ and so they simply permute elements in the interval $\interval{x_r+y_r-m}{y_r} \subset J_r$.  So we can ignore all transpositions coming from the same row or column as $(x_r,y_r)$.

If $(x_{r+1},y_{r+1}) = (x_r + 1,y_r-1)$, i.e. the square directly northwest of $(x_r,y_r)$ is white, then the induction step is done since $J_{r+1} = J_r$ and we can write
\begin{align*}
J_{r+1} & = \Big(\interval{1}{x_r}\backslash \{x_1,\dots,x_r\}\Big) \cup \interval{x_r+y_r-m}{y_r-1} \cup \{y_r,\dots, y_1\} \\
& = \Big(\interval{1}{x_r +1}\backslash \{x_1,\dots, x_r +1\}\Big) \cup \interval{(x_r + 1) + (y_r - 1) - m}{y_r-2} \cup \{y_r + 1, y_r,\dots, y_1\} \\
& = \Big(\interval{1}{x_{r+1}}\backslash \{x_1,\dots, x_{r+1}\}\Big) \cup \interval{x_{r+1} + y_{r+1} - m}{y_{r+1}-1} \cup \{y_{r+1},\dots, y_1\}.
\end{align*}
If not, the square $(x_r+1,y_r-1)$ is black.  In this case, the argument from the base step (black square) applies, simply replacing $(x,y)$ with $(x_r,y_r)$ and $(x_1,y_1)$ with $(x_{r+1},y_{r+1})$.  The induction step is proved.

All that remains is to consider what happens when $r=t$.  There are three possible cases: $x_t = m$ (we have reached the top row of the diagram), or $y_t= m+1$ (we have reached the leftmost column of the diagram), or $x_t < m$ and $y_t > m+1$ but all squares strictly northwest of $(x_t,y_t)$ are black.  

In the first two cases, we already have $J_t = v_{x,y}w_{x,y}^{-1}\interval{1}{m}$ and so we just need to check that $J_t$ has the form \eqref{eq:final form of I for chain/permutation result}.  If $x_t = m$, then $\interval{x_t+y_t-m}{y_t-1} = \emptyset$ and the result is clear; similarly, if $y_t = m+1$, then $\interval{x_t+y_t-m}{y_t-1} = \interval{x_t+1}{m}$ and we can write
\begin{align*}J_t &= \Big(\interval{1}{x_t} \backslash \{x_1,\dots, x_t\}\Big) \cup \interval{x_t+1}{m} \cup \{y_t, \dots, y_1\}\\
& = \Big(\interval{1}{m}\backslash \{x_1,\dots, x_t\}\Big)\cup \{y_t,\dots, y_1\}.\end{align*}

Finally, if $x_t < m$ and $y_t > m+1$ but all squares strictly northwest of $(x_t,y_t)$ are black: we have $v_{x,y}w_{x,y}^{-1}\interval{1}{m} = uJ_t$, where
\[u := (s_m\dots s_{m+y_t - 2})\dots(s_{x_t +1}\dots s_{x_t + y_t - m -1})\]
corresponds to the block of black squares strictly northwest of $(x_t,y_t)$. (Any transpositions coming from the same row as $x_t$ or the same column as $y_t$ have no effect, by the same argument as in the induction step, so we can ignore them here.)  This permutation $u$ has the effect of mapping $\interval{x_t+y_t-m}{y_t-1}$ to $\interval{x_t+1}{m}$ and fixing everything else, so we have
\begin{align*}
v_{x,y}w_{x,y}^{-1}\interval{1}{m} &= u\Big( \big(\interval{1}{x_t}\backslash \{x_1,\dots,x_t\}\big)\cup \interval{x_t+y_t-m}{y_t-1} \cup \{y_t,\dots, y_1\}\Big)\\ 
& = \Big(\interval{1}{x_t} \backslash\{x_1,\dots, x_t\}\Big) \cup \interval{x_t+1}{m} \cup \{y_t,\dots, y_1\} \\
& = \Big(\interval{1}{m}\backslash \{x_1,\dots, x_t\}\Big) \cup \{y_t,\dots, y_1\},
\end{align*}
as required.
\end{proof}

Ideally, we would like a description of the chain rooted at the square $(x,y)$ in terms of $v$ and $w$, not $v_{x,y}$ and $w_{x,y}$; otherwise, this is computationally no easier than just working out the chains from the diagrams.  The next proposition translates the results of Proposition~\ref{res:chains in terms of permutations} into an expression involving only $v$ and $w$.  We do this for the case of chains rooted along the southeast border of $C$, since our aim is to study the Grassmann necklace of the diagram; however, it could easily be generalised to any starting square with some careful reindexing.

To simplify the intuition behind the proof, we first prove the result for the case of a rectangular Cauchon diagram with no black squares in the bottom row or rightmost column.

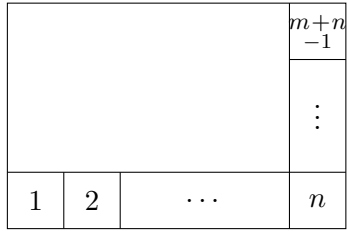
\begin{figure}[h!]
\begin{tikzpicture}[scale=0.75]
\draw (0,0) rectangle (6,4);
\draw (0,1) -- ++(6,0);
\draw (5,0) -- ++(0,4);
\draw (1,0) -- ++ (0,1);
\draw (2,0) -- ++(0,1);
\draw (5,3) -- ++(1,0);
\draw (0.5,0.5) node{$1$} ++ (1,0) node{$2$} ++ (2,0) node{$\dots$} (5.5,0.5) node{$n$} ++ (0,1.62) node{$\vdots$};
\draw (5.5,3.65) node{\footnotesize $m\!+\!n$} ++(0,-0.35) node{\scriptsize $-1$}; 

\end{tikzpicture}
\caption{Numbering the squares along the southeast border of an $m\times n$ Cauchon diagram.}
\label{fig:numbering for southeast border}
\end{figure}

\begin{proposition}\label{res:northwest permutations to permutations, white strip version}
Let $C$ be an $m\times n$ Cauchon diagram with no black squares in the bottom row or rightmost column, and let $v$ be the permutation associated to $C$.  Label boxes in the bottom row and rightmost column of $C$ with the numbers $\interval{1}{m+n-1}$ as in Figure~\ref{fig:numbering for southeast border}, and if $k$ is in box $(x,y)$ then we write $v_k$, $w_k$ for $v_{x,y}$, $w_{x,y}$.  Then
\begin{enumerate}[(i)]
\item If $k \in \interval{1}{n}$, then $v_kw_k^{-1}\interval{1}{m} = \interval{1}{m+k} \backslash  v\interval{1}{k}$.
\item If $k \in \interval{n+1}{m+n-1}$, then $v_kw_k^{-1}\interval{1}{m} = \Big(\interval{1}{m+n}\backslash v\interval{1}{k}\Big) \cup \interval{1}{k-n}$.
\end{enumerate}
\end{proposition}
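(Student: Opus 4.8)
The plan is to reduce both parts to the single identity $v_k\interval{1}{k}=v\interval{1}{k}$, which replaces the ``local'' permutation $v_k$ by the global one $v$, and then to establish that identity by keeping careful track of the order in which the elementary transpositions building $v$ are applied.

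First I would compute the relevant all-black data from \eqref{eq:set for all white diagram}. For $k\in\interval{1}{n}$ box $k$ sits at $(1,m+k)$, so $w_k^{-1}\interval{1}{m}=\interval{k+1}{m+k}$, while for $k\in\interval{n+1}{m+n-1}$ box $k$ sits at $(k-n+1,m+n)$, so $w_k^{-1}\interval{1}{m}=\interval{1}{k-n}\cup\interval{k+1}{m+n}$. Next, since the transposition in box $(a,b)$ is $s_{a+b-m-1}$, a glance at which boxes survive shows that in the first case $v_k$ only uses $s_1,\dots,s_{m+k-1}$ and hence permutes $\interval{1}{m+k}$, while in the second case $v_k$ only uses $s_{k-n+1},\dots,s_{m+n-1}$ and hence fixes $\interval{1}{k-n}$ pointwise and permutes $\interval{k-n+1}{m+n}$. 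Applying $v_k$ to the sets above and using these support restrictions gives, for $k\le n$,
\[v_kw_k^{-1}\interval{1}{m}=\interval{1}{m+k}\backslash v_k\interval{1}{k},\]
and, for $k\ge n+1$ (after peeling off the pointwise-fixed block $\interval{1}{k-n}$),
\[v_kw_k^{-1}\interval{1}{m}=\interval{1}{k-n}\cup\Big(\interval{k-n+1}{m+n}\backslash\big(v_k\interval{1}{k}\backslash\interval{1}{k-n}\big)\Big).\]
Granting the identity $v_k\interval{1}{k}=v\interval{1}{k}$, part (i) is now immediate; for part (ii) I would additionally invoke the restricted-permutation inequalities $-n\le v(i)-i\le m$, which give $v\interval{1}{k}\subseteq\interval{1}{m+k}$ and (applying the same bound to $v^{-1}$) $\interval{1}{k-n}\subseteq v\interval{1}{k}$, and these let me rewrite the displayed set as $\big(\interval{1}{m+n}\backslash v\interval{1}{k}\big)\cup\interval{1}{k-n}$.

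It remains to prove $v_k\interval{1}{k}=v\interval{1}{k}$, and this is the heart of the matter. Recall that $v$ is read off $C$ by processing its boxes row by row from the bottom upwards, and within a row from right to left, applying the transposition of each black box as it is reached; $v_k$ is read off in exactly the same way after whitening the boxes strictly to the right of column $m+k$ (if $k\le n$) or strictly below row $k-n+1$ (if $k\ge n+1$). When $k\ge n+1$ the whitened boxes are precisely the bottom $k-n$ rows, so they are all processed before any surviving box, and each carries a transposition $s_{a+b-m-1}$ with $a+b-m-1\le k-1$; hence their composite lies in $\langle s_1,\dots,s_{k-1}\rangle$, fixes $\interval{1}{k}$ setwise, and may be discarded. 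When $k\le n$ the whitened boxes lie in the last $n-k$ columns, and in each row they are processed before the surviving boxes of that row; here I would run an induction on the row index $a$ showing that after rows $1,\dots,a$ have been processed the running set agrees in the $v$- and $v_k$-computations and is contained in $\interval{1}{a+k}$. The key point is that the whitened transpositions in row $a$ are all of the form $s_j$ with $j\ge a+k$, so they fix every element of a set contained in $\interval{1}{a+k-1}$ and therefore act trivially on the running set before the (identical) surviving transpositions of that row are applied, after which the set still lies in $\interval{1}{a+k}$.

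The main obstacle is exactly this last induction: one has to be sure that the ``extra'' transpositions in each row really are applied before the kept ones (this uses the right-to-left reading order together with the fact that in row $a$ the kept transpositions are $s_a,\dots,s_{a+k-1}$ and the extra ones $s_{a+k},\dots,s_{a+n-1}$), and that the interval bound $\interval{1}{a+k}$ propagates cleanly through the base case (where the bottom row, being black-free, contributes nothing) and the inductive step. Everything else --- the computations of $w_k^{-1}\interval{1}{m}$, the support restrictions on $v_k$, and the interval arithmetic needed for part (ii) --- is routine once this identity is in hand.
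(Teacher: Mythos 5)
Your argument is correct, and it follows the paper's skeleton --- compute $w_k^{-1}\interval{1}{m}$ explicitly, then use the support of $v_k$ (it permutes $\interval{1}{m+k}$ when $k\le n$, and fixes $\interval{1}{k-n}$ pointwise when $k\ge n+1$) to rewrite $v_kw_k^{-1}\interval{1}{m}$ as a complement of an image set --- but the key step is handled differently. The paper proves case (i) via $v_k\interval{1}{k}=v\interval{1}{k}$, justified by the pipe-dream observation that pipes entering along the bottom never move rightwards, and proves case (ii) \emph{without} that identity, using instead that $v_k$ agrees with $v$ on $\interval{k+1}{m+n}$ (pipes entering on the right above the deleted rows are unaffected) together with $v\interval{1}{m+n}=\interval{1}{m+n}$. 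You reduce both cases to the single identity $v_k\interval{1}{k}=v\interval{1}{k}$ and prove it at the level of reduced words: for $k\ge n+1$ the deleted transpositions are applied first and lie in $\langle s_1,\dots,s_{k-1}\rangle$, hence preserve $\interval{1}{k}$ setwise, while for $k\le n$ your row-by-row induction, with the bound that the running image of $\interval{1}{k}$ stays inside $\interval{1}{a+k}$, shows the deleted transpositions $s_j$ with $j\ge a+k$ act trivially before the common surviving ones are applied. This is in effect a rigorous word-level version of the paper's ``pipes cannot go right'' and ``wrapping'' pictures, bought at the cost of the bookkeeping induction, and it has the mild advantage of treating both cases by one lemma. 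Two minor remarks: your appeal to the restricted-permutation inequalities in case (ii) is unnecessary, since $\big(\interval{1}{m+n}\backslash v\interval{1}{k}\big)\cup\interval{1}{k-n}$ equals $\interval{1}{k-n}\cup\Big(\interval{k-n+1}{m+n}\backslash\big(v\interval{1}{k}\backslash\interval{1}{k-n}\big)\Big)$ by pure set arithmetic (though the inclusion $\interval{1}{k-n}\subseteq v\interval{1}{k}$ you cite is indeed true); and your induction does not actually need the hypothesis that the bottom row is white --- it merely simplifies the base case.
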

\begin{proof}

Let $1 \leq k \leq n$. In this case, the permutation $w_k$ is easily seen to be equal to the permutation coming from the all-black $m \times k$ diagram, i.e. we have $w_k^{-1} = (1\ 2\ \dots m+k)^k$.  (We are implicitly identifying $S_{m+k}$ with the subgroup of permutations in $S_{m+n}$ which fix all elements in $\interval{m+k+1}{m+n}$.) It is therefore clear that $w_k^{-1}$ acts on the interval $\interval{1}{m}$ by $w_k^{-1}\interval{1}{m} = \interval{k+1}{k+m}$.  

Meanwhile, since we obtain $v_k$ from $v$ by ignoring any transpositions which occur strictly to the right of box $k$ in the diagram, we can visualise this as deleting from $C$ all columns strictly to the right of $k$ and ``wrapping'' the numbers $\interval{k+1}{k+m}$ up the side of the diagram, then computing the pipe dream of the resulting diagram.  This is illustrated in Figure~\ref{fig: wrapping 1}.

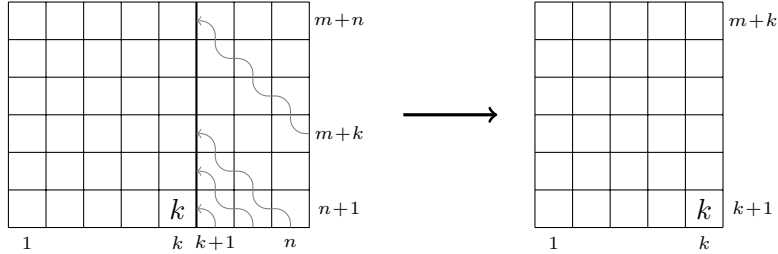
\begin{figure}[h!]
\begin{tikzpicture}
\draw[step=0.5] grid (4,3);
\draw (2.25,0.25) node{$k$};
\draw (0.25,-0.2) node{\tiny $1$} ++(2,0) node{\tiny $k$} ++(0.5,0) node{\tiny $k\!+\!1$} ++(1,0) node{\tiny $n$};
\draw (4.4,0.25) node{\tiny $n\!+\!1$} ++(0,1) node{\tiny $m\!+\!k$} ++(0,1.5) node{\tiny $m\!+\!n$};
\draw[thick] (2.5,0) -- ++(0,3);

\draw[->,rounded corners=0.2cm,gray] (2.75,0) -- ++(0,0.25) -- ++(-0.25,0);
\draw[->,rounded corners=0.2cm,gray] (3.25,0) -- ++(0,0.25) -- ++(-0.5,0) -- ++(0,0.5) -- ++(-0.25,0);
\draw[->,rounded corners=0.2cm,gray] (3.75,0) -- ++(0,0.25) -- ++(-0.5,0) -- ++(0,0.5) -- ++(-0.5,0) -- ++(0,0.5) -- ++(-0.25,0);
\draw[->,rounded corners=0.2cm,gray] (4,1.25) -- ++(-0.25,0) -- ++(0,0.5) -- ++(-0.5,0) -- ++(0,0.5) -- ++(-0.5,0) -- ++(0,0.5) -- ++(-0.25,0);

\draw[->,very thick] (5.25,1.5) -- ++(1.25,0);

\draw[step=0.5cm] (7,0) grid (9.5,3);
\draw (7,0) -- (7,3); 
\draw (7.25,-0.2) node{\tiny 1} ++(2,0) node{\tiny $k$};
\draw (9.9,0.25) node{\tiny $k\!+\!1$} ++(0,2.5) node{\tiny $m\!+\!k$}; 
\draw (9.25,0.25) node{$k$};

\end{tikzpicture}

\caption{``Wrapping'' the numbers $k+1,\dots, k+m$ along the side of the diagram.}
\label{fig: wrapping 1}
\end{figure}

Clearly $v\interval{1}{k} = v_k\interval{1}{k}$ since pipe dreams can't go to the right, and so the first $k$ pipes are unaffected by any deletions we might have made.  We also observe that $v_k\interval{1}{k+m} = \interval{1}{k+m}$ by viewing $v_k$ as a permutation in $S_{m+k}$.  We obtain
\begin{align*}v_kw_k^{-1}\interval{1}{m} & = v_k\interval{k+1}{k+m} \\
& = \interval{1}{k+m} \backslash v_k\interval{1}{k} \\
& = \interval{1}{k+m} \backslash v\interval{1}{k}.
\end{align*}

Case $(ii)$ proceeds in a very similar manner.  Let $k = n+i$ with $1 \leq i \leq m-1$; arguing as in case $(i)$, we see that $w_k^{-1}$ fixes $\interval{1}{i}$ and acts as $j\mapsto j+n$ for $j \in \interval{i+1}{m}$, that $v_k|_{\interval{1}{i}} = id$, and that $v_k|_{\interval{i+1}{m+n}}$ can be visualised by deleting the final $i$ rows from $C$ and wrapping the numbers $\interval{i+1}{i+n}$ along the bottom as illustrated in Figure~\ref{fig:wrapping 2}.

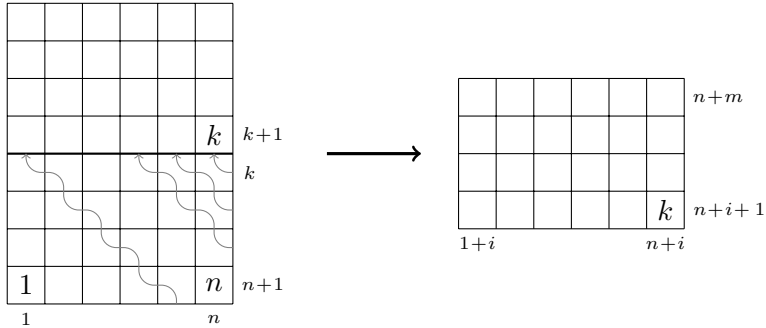
\begin{figure}[h!]

\begin{tikzpicture}
\draw[step=0.5cm] (0,0) grid (3,4);
\draw[thick] (0,2) -- ++(3,0);

\draw (0.25,0.25) node{1} ++(2.5,0) node{$n$}  ++ (0,2) node{$k$};
\draw (0.25,-0.2) node{\tiny $1$} ++(2.5,0) node{\tiny $n$};
\draw (3.4,0.25) node{\tiny $n\!+\!1$} ++(0,1.5) node{\tiny $k \hphantom{+1}$} ++(0,0.5) node{\tiny $k\!+\!1$};

\draw[<-,rounded corners=0.2cm,gray] (2.75,2) -- ++(0,-0.25) -- ++(0.25,0);
\draw[<-,rounded corners=0.2cm,gray] (2.25,2) -- ++(0,-0.25) -- ++(0.5,0) -- ++(0,-0.5) -- ++(0.25,0);
\draw[<-,rounded corners=0.2cm,gray] (1.75,2) -- ++(0,-0.25) -- ++(0.5,0) -- ++(0,-0.5) -- ++(0.5,0) -- ++(0,-0.5) -- ++(0.25,0);

\draw[<-,rounded corners=0.2cm,gray] (0.25,2) -- ++(0,-0.25) -- ++(0.5,0) -- ++(0,-0.5) -- ++(0.5,0) -- ++(0,-0.5) -- ++(0.5,0) -- ++(0,-0.5) -- ++(0.5,0) -- ++(0,-0.25);

\draw[->,very thick] (4.25,2) -- ++(1.25,0);

\draw[step=0.5cm] (6,1) grid (9,3);
\draw (6,1) -- +(0,2) (6,1) -- +(3,0);

\draw (8.75,1.25) node{$k$};
\draw (6.25,0.8) node{\tiny $1\!+\!i$} ++(2.5,0) node{\tiny $n\!+\!i$};
\draw (9.6,1.25) node{\tiny $n\!+\!i\!+1$} ++(-0.15,1.5) node{\tiny $n\!+\!m$};

\end{tikzpicture}
\caption{Wrapping the numbers $i+1, \dots, i+n$ along the bottom of the diagram.}
\label{fig:wrapping 2}
\end{figure}

Once again, we note that $v\interval{1}{m+n} = \interval{1}{m+n}$ and $v_k\interval{n+i+1}{m+n} = v\interval{n+i+1}{m+n}$, so that
\begin{align*}v_k\interval{n+i+1}{m+n} & = v\interval{n+i+1}{m+n}\\
& = \interval{1}{m+n}\backslash v\interval{1}{n+i}.
\end{align*}
Combining everything, we have
\begin{align*} v_kw_k^{-1}\interval{1}{m} &= v_k\Big(\interval{1}{i}\cup \interval{n+i+1}{m+n}\Big) \\
& = \interval{1}{i}\cup \Big(\interval{1}{m+n}\backslash v\interval{1}{n+i}\Big).\end{align*}
Since $k=n+i$, this is exactly the set promised by the statement of the theorem.
\end{proof}

\begin{corollary}\label{res:arbitrary rectangle cauchon diagram, chain rooted at to permutation}
Let $C$ be an $m\times n$ Cauchon diagram, and number the boxes in the bottom row and rightmost column with the numbers $\interval{1}{m+n-1}$ as in Figure~\ref{fig:numbering for southeast border}.  Let $(x_1,y_1), \dots, (x_t,y_t)$ be the chain rooted in the box labelled $k$.  If $k \in \interval{1}{n}$, then
\begin{equation*}
\Big(\interval{1}{m} \backslash \{x_1,\dots,x_t\}\Big) \cup\{y_t,\dots, y_1\} = \interval{1}{m+k} \backslash  v\interval{1}{k},
\end{equation*}
while if $k \in \interval{n+1}{m+n-1}$ then
\begin{equation*}
\Big(\interval{1}{m} \backslash \{x_1,\dots,x_t\}\Big) \cup\{y_t,\dots, y_1\} = \Big(\interval{1}{m+n} \backslash  v\interval{1}{k}\Big) \cup \interval{1}{k-n}.
\end{equation*}
\end{corollary}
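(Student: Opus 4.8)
The plan is to deduce the corollary directly from Proposition~\ref{res:chains in terms of permutations} together with the \emph{proof} (not merely the statement) of Proposition~\ref{res:northwest permutations to permutations, white strip version}, observing that the hypothesis ``no black squares in the bottom row or rightmost column'' was only a drawing convenience and is never actually used. Write $(x,y)$ for the box labelled $k$, so $(x,y)=(1,m+k)$ when $k\in\interval{1}{n}$ and $(x,y)=(k-n+1,m+n)$ when $k\in\interval{n+1}{m+n-1}$, and abbreviate $v_{x,y},w_{x,y}$ to $v_k,w_k$. By Proposition~\ref{res:chains in terms of permutations} the left-hand side of each claimed identity equals $v_kw_k^{-1}\interval{1}{m}$ (the empty-chain case $t=0$ being included there), so it is enough to prove, for an arbitrary Cauchon diagram $C$, the two identities for $v_kw_k^{-1}\interval{1}{m}$ appearing in Proposition~\ref{res:northwest permutations to permutations, white strip version}.

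For $k\in\interval{1}{n}$: the permutation $w_k$ depends only on the position of box $k$ and not on $C$ (it is the permutation of the all-black diagram with the columns strictly to the right of $m+k$ recoloured white), so the computation $w_k^{-1}=(1\,2\,\dots\,m+k)^k$ and hence $w_k^{-1}\interval{1}{m}=\interval{k+1}{m+k}$ carries over verbatim. Likewise $v_k$, viewed as an element of $S_{m+k}$, fixes $\interval{m+k+1}{m+n}$, so $v_k\interval{1}{m+k}=\interval{1}{m+k}$. The only place the diagram is needed is the equality $v\interval{1}{k}=v_k\interval{1}{k}$: a pipe entering the bottom edge at column $j\le k$ can only move upward or to the left, so its trajectory never leaves the leftmost $k$ columns and is unaffected by the recolouring. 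Combining these, $v_kw_k^{-1}\interval{1}{m}=v_k\interval{k+1}{m+k}=\interval{1}{m+k}\backslash v_k\interval{1}{k}=\interval{1}{m+k}\backslash v\interval{1}{k}$, as required.

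For $k\in\interval{n+1}{m+n-1}$, put $i=k-n$. Now $v_k$ and $w_k$ come from recolouring the bottom $i$ rows of $C$ and of the all-black diagram white, respectively; as before $w_k$ does not depend on $C$, so $w_k^{-1}\interval{1}{m}=\interval{1}{i}\cup\interval{n+i+1}{m+n}$ exactly as in the proof of Proposition~\ref{res:northwest permutations to permutations, white strip version}, and inspecting the all-white bottom strip shows $v_k$ fixes $\interval{1}{i}$ pointwise. The diagram-dependent input this time is that a pipe entering the right edge at row $r\ge i+1$ can only move upward or to the left, so it stays in rows $\ge i+1$ and the recolouring of the bottom $i$ rows leaves it unchanged; therefore $v_k\interval{n+i+1}{m+n}=v\interval{n+i+1}{m+n}=\interval{1}{m+n}\backslash v\interval{1}{k}$, using $v\in S_{m+n}$. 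Hence $v_kw_k^{-1}\interval{1}{m}=v_k\interval{1}{i}\cup v_k\interval{n+i+1}{m+n}=\interval{1}{k-n}\cup\big(\interval{1}{m+n}\backslash v\interval{1}{k}\big)$.

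The one thing that really needs care is the pipe-dream bookkeeping — that pipes never travel to the right and never travel downward — since this is precisely what allows the ``wrapping'' pictures in the proof of Proposition~\ref{res:northwest permutations to permutations, white strip version} to be run on an arbitrary $C$; everything else is identical symbol-pushing. (Alternatively one could pad $C$ by an all-white bottom row and an all-white rightmost column so as to reduce literally to the white-strip case, but then one must track the index shift this induces on the associated permutation and on the labelling of the southeast border, which is no cleaner.)
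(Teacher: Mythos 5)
Your proof is correct, but it takes a different route from the paper's. The paper handles Corollary~\ref{res:arbitrary rectangle cauchon diagram, chain rooted at to permutation} by a padding trick: it forms $C''$ by appending an all-white row below and an all-white column to the right of $C$, chooses a bespoke numbering of rows ($\interval{0}{m}$) and columns ($\interval{m+1}{m+n+1}$) for $C''$ so that the embedded copy of $C$ keeps its original indices, observes that the chain rooted at box $k$ of $C$ is the chain rooted at box $k'$ of $C''$ with the first term dropped, and then cites Propositions~\ref{res:chains in terms of permutations} and \ref{res:northwest permutations to permutations, white strip version}. You instead observe that the white-strip hypothesis in Proposition~\ref{res:northwest permutations to permutations, white strip version} is never used in its proof, and re-run that argument verbatim for arbitrary $C$. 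Both approaches are sound. Your route has the advantage of requiring no index-translation between $C$ and $C''$ and no verification that the chains of $C$ and $C''$ agree after truncation; its cost is that it re-derives the content of Proposition~\ref{res:northwest permutations to permutations, white strip version} rather than applying it as a black box (you are right to flag that you are invoking its proof, not its statement). A side benefit of your observation, worth recording, is that Proposition~\ref{res:northwest permutations to permutations, white strip version} in fact holds as stated for all $m\times n$ Cauchon diagrams, not only those with an all-white southeast strip. Two small points to make your write-up airtight: (a) your appeal to Proposition~\ref{res:chains in terms of permutations} to rewrite the left-hand side as $v_kw_k^{-1}\interval{1}{m}$ needs the case where box $k$ is black, which that proposition does handle (via the nearest white square); and (b) your justification that $v_k$ fixes $\interval{1}{i}$ is cleanest argued via transpositions — every $s_c$ surviving into $v_k$ comes from a square $(a,b)$ with $a\ge i+1$ and $b\ge m+1$, so $c=a+b-m-1\ge i+1$ — rather than by ``inspecting an all-white bottom strip,'' since for arbitrary $C$ the bottom strip is white only in the restricted diagram defining $v_k$, not in $C$ itself.
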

\begin{proof}\ 
Let $C'$ be the $(m+1) \times n$ diagram obtained from $C$ by adding an extra row of white squares underneath the bottom row, and let $C''$ be the $(m+1)\times (n+1)$ diagram obtained from $C'$ by adding an extra column of white squares on the right hand side.  Clearly $C''$ is a Cauchon diagram if and only if $C$ is.  We will number the rows of $C''$ with the numbers $\interval{0}{m}$ from bottom to top, and the columns of $C''$ with the numbers $\interval{m+1}{m+n+1}$ from left to right.  Finally, we number this strip of white squares in the bottom row and column of $C''$ with $0',1',\dots, (m+n)'$ running from bottom left to top right according to the same convention as Figure~\ref{fig:numbering for southeast border}.

Now $C''$ satisfies the conditions of Proposition~\ref{res:northwest permutations to permutations, white strip version}, and clearly the chain rooted at box $i$ of $C$ is exactly the chain rooted at box $i'$ of $C''$ with the first term removed.  The result now follows immediately from Propositions~\ref{res:chains in terms of permutations} and \ref{res:northwest permutations to permutations, white strip version}.
\end{proof}

We can now state and prove our second main theorem, which relates the minors coming from a Grassmann necklace to the minors in Theorem~\ref{res:sian-milen thm}.

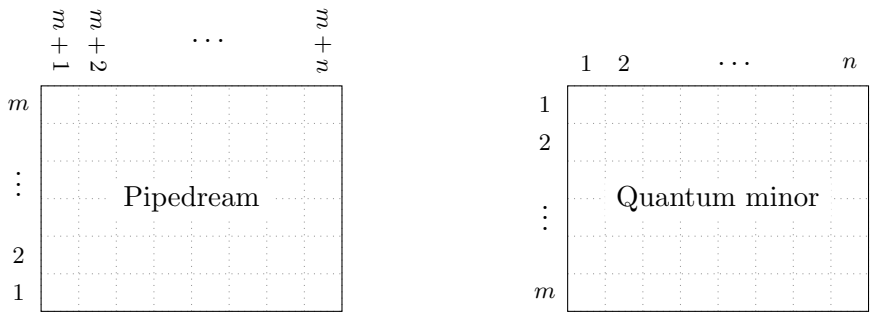
\begin{figure}[h!]

\begin{tikzpicture}

\draw[light-gray,dotted,step=0.5] grid (4,3);
\draw rectangle (4,3);
\draw (-0.3,0.25) node{\footnotesize 1};
\draw (-0.3,0.75) node{\footnotesize 2};
\draw (-0.3,1.8) node{$\vdots$};
\draw (-0.3,2.75) node{\footnotesize $m$};
\node[rotate=-90] at (0.25,3.6) {\footnotesize $m+1$};
\node[rotate=-90] at (0.75,3.6) {\footnotesize $m+2$};
\draw (2.25,3.6) node{$\dots$};
\node[rotate=-90] at (3.75,3.6) {\footnotesize $m+n$};


\draw[light-gray,dotted,step=0.5] (7,0) grid (11,3);
\draw (7,0) rectangle (11,3);
\draw (6.7,0.25) node{\footnotesize $m$};
\draw (6.7,1.35) node{$\vdots$};
\draw (6.7,2.25) node{\footnotesize $2$};
\draw (6.7,2.75) node{\footnotesize $1$};
\node at (7.25,3.3) {\footnotesize $1$};
\node at (7.75,3.3) {\footnotesize $2$};
\draw (9.25,3.3) node{$\dots$};
\node at (10.75,3.3) {\footnotesize $n$};

\draw[fill=white,white] (0.9,1.25) rectangle ++(2.15,0.5);
\draw[fill=white,white] (7.5,1.25) rectangle ++(3,0.5);

\draw (2,1.5) node{Pipedream};
\draw (9,1.5) node{Quantum minor};


\end{tikzpicture}
\caption{Translating between row/column numbering conventions for pipedreams and quantum minors.}
\label{fig:translate minors pipedreams}
\end{figure}

\begin{theorem}\label{res:yakimov's minors come from the chains rooted at squares}
Let $C$ be an $m\times n$ rectangular Cauchon diagram with associated permutation $v$. We label the squares in the bottom row and rightmost column of $C$ with the numbers $\interval{1}{m+n-1}$ as in Figure~\ref{fig:numbering for southeast border}, and define $I_k = v\interval{1}{k}$.  Then the quantum minor defined by the chain rooted at box $k$ is exactly
\[\Big[w_m^o\big(p_1(I_k)\big)\Big|\interval{m+1}{m+k}\backslash p_2(I_k) - m\Big]_q\]
if $k \in \interval{1}{n}$, and
\[\Big[w_m^o\big(p_1(I_k)\backslash \interval{1}{k-n}\big)\Big| \interval{m+1}{m+n} \backslash p_2(I_k) - m\Big]_q\]
if $k \in \interval{n+1}{m+n-1}$.  Hence the separating Ore set for $K$ constructed in Theorem~\ref{res:GN constructs the Ore set we need} is equal to the separating Ore set for $K$ in Theorem~\ref{res:sian-milen thm}.
\end{theorem}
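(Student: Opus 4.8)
The plan is to compute the quantum minor attached to the chain rooted at box $k$ directly from Corollary~\ref{res:arbitrary rectangle cauchon diagram, chain rooted at to permutation}, match it term by term with the two families of generators in Theorem~\ref{res:sian-milen thm}, and then invoke Proposition~\ref{res:oh grassmann necklace computation} for the statement about Ore sets. The first step is to express the quantum minor of a chain in the pipedream coordinates of Figure~\ref{fig:pipedreamnumbering}. A square carrying the pipedream label $(a,b)$ with $a\in\interval{1}{m}$ and $b\in\interval{m+1}{m+n}$ is the square with standard label $(w_m^o(a),\,b-m)$ (this is exactly the translation recorded in Figure~\ref{fig:translate minors pipedreams}); hence, writing the chain rooted at box $k$ in pipedream coordinates as $(x_1,y_1),\dots,(x_t,y_t)$, the quantum minor it defines in the sense of the paragraph after Definition~\ref{def:chain rooted at} (and as used in Proposition~\ref{res:oh grassmann necklace computation}) is
\[\big[\, w_m^o\{x_1,\dots,x_t\}\ \big|\ \{y_1,\dots,y_t\}-m\,\big]_q,\]
and a short check against the identification \eqref{eq:translate minors and plucker coordinates} confirms that this coincides with the quantum minor attached to the Grassmann necklace term $K_{m+n-k+1}$.

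Next I would read off the two index sets from Corollary~\ref{res:arbitrary rectangle cauchon diagram, chain rooted at to permutation}. Setting $J:=\big(\interval{1}{m}\backslash\{x_1,\dots,x_t\}\big)\cup\{y_1,\dots,y_t\}$ and using $\{x_i\}\subseteq\interval{1}{m}$ and $\{y_i\}\subseteq\interval{m+1}{m+n}$, one recovers $\{x_1,\dots,x_t\}=\interval{1}{m}\backslash p_1(J)$ and $\{y_1,\dots,y_t\}=p_2(J)$ with no further hypotheses. The Corollary identifies $J$ with $\interval{1}{m+k}\backslash I_k$ when $k\in\interval{1}{n}$ and with $\big(\interval{1}{m+n}\backslash I_k\big)\cup\interval{1}{k-n}$ when $k\in\interval{n+1}{m+n-1}$, where $I_k=v\interval{1}{k}$. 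Taking $p_1$ and $p_2$ of $J$ then yields, in the first case, $\{x_i\}=p_1(I_k)$ and $\{y_i\}=\interval{m+1}{m+k}\backslash p_2(I_k)$, and in the second case (using only that $k-n<m$, so that $\interval{1}{k-n}\subseteq\interval{1}{m}$), $\{x_i\}=p_1(I_k)\backslash\interval{1}{k-n}$ and $\{y_i\}=\interval{m+1}{m+n}\backslash p_2(I_k)$. Substituting these into the displayed minor above produces precisely the two generators listed in Theorem~\ref{res:sian-milen thm}, which establishes the first assertion.

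For the final sentence, Proposition~\ref{res:oh grassmann necklace computation} tells us the Grassmann necklace of $C$ is $(K_1,\dots,K_{m+n})$ with $K_1=\interval{1}{m}$ and $K_{m+n-k+1}$ equal (as a Pl\"ucker coordinate) to the chain rooted at box $k$ for $k\in\interval{1}{m+n-1}$; so as $k$ runs over $\interval{1}{m+n-1}$ the quantum minors attached to these chains are exactly the minors $[S_i|T_i]_q$, $i\in\interval{2}{m+n}$, generating $\widetilde{E}_K$ in Theorem~\ref{res:GN constructs the Ore set we need}. By the previous two paragraphs this list of quantum minors coincides, up to the scalar ambiguity inherent in the definition \eqref{eq:def of quantum minor} of a quantum minor, with the list of generators in Theorem~\ref{res:sian-milen thm}, and therefore the two multiplicative sets they generate agree.

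I expect the main obstacle to be purely organisational bookkeeping rather than any genuine difficulty: one must keep straight the two row/column numbering conventions, the $w_m^o$ and $w_n^o$ twists introduced by the embedding $\theta$ in \eqref{eq:translate minors and plucker coordinates}, and the index shift $k\leftrightarrow m+n-k+1$ relating box labels to Grassmann necklace positions, together with the handful of elementary manipulations needed to split $p_1$ and $p_2$ of complements of $I_k$ cleanly. Once the translation dictionary is pinned down, the identification in each of the two cases is a one-line substitution.
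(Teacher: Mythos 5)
Your proposal is correct and follows essentially the same route as the paper: translate pipedream coordinates into the standard minor convention, feed the set identities from Corollary~\ref{res:arbitrary rectangle cauchon diagram, chain rooted at to permutation} through the projections $p_1,p_2$, and then identify the resulting minors with both the Grassmann-necklace generators (via Proposition~\ref{res:oh grassmann necklace computation}) and the generators of Theorem~\ref{res:sian-milen thm}. The only noticeable departure is cosmetic: in the case $k\in\interval{n+1}{m+n-1}$ the paper first invokes the restricted-permutation property to deduce $\interval{1}{k-n}\subseteq p_1(I_k)$ and hence disjointness of $\big(\interval{1}{m}\backslash p_1(I_k)\big)$ and $\interval{1}{k-n}$, whereas you extract $\{x_1,\dots,x_t\}=p_1(I_k)\backslash\interval{1}{k-n}$ directly by complementation and de~Morgan, which is a perfectly valid shortcut (the disjointness is in any case forced by cardinality from Corollary~\ref{res:arbitrary rectangle cauchon diagram, chain rooted at to permutation}, so it costs you nothing to sidestep it).
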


\begin{proof}
As always, this splits into two cases depending on whether $k \leq n$ or $k \geq n+1$.  

Suppose $k \in \interval{1}{n}$.  We know that $\{x_1,\dots, x_t\} \subset \interval{1}{m}$ and $\{y_1,\dots, y_t\} \subset \interval{m+1}{m+n}$ by the definition of the chain construction, and so by Corollary~\ref{res:arbitrary rectangle cauchon diagram, chain rooted at to permutation}, we have 
\begin{align*}\Big(\interval{1}{m} \backslash \{x_1,\dots,x_t\}\Big) \cup\{y_t,\dots, y_1\} &= \interval{1}{m+k} \backslash  I_k\\
& = \Big(\interval{1}{m} \backslash p_1(I_k)\Big) \cup \Big(\interval{m+1}{m+k}\backslash p_2(I_k)\Big).
\end{align*}
It follows that $\{x_1,\dots, x_t\} = p_1(I_k)$ and $\{y_1,\dots, y_k\} = \interval{m+1}{m+k}\backslash p_2(I_k)$.  All that remains is to translate the row/column numbering from the pipedream convention to the standard numbering for quantum minors, as illustrated in Figure~\ref{fig:translate minors pipedreams}: applying $w_m^o$ to the row set and subtract $m$ from each entry in the column set, we obtain exactly the minor promised in the statement of the theorem.

Now let $k \in \interval{n+1}{m+n-1}$.  By Corollary~\ref{res:arbitrary rectangle cauchon diagram, chain rooted at to permutation}, we have
\begin{equation}\label{eq:computing minor for k >= n+1}\begin{aligned}
\Big(\interval{1}{m} \backslash \{x_1,\dots,x_t\}\Big) &\cup\{y_t,\dots, y_1\} \\
& = \Big(\interval{1}{m+n} \backslash  I_k\Big) \cup \interval{1}{k-n}\\
& = \interval{1}{m} \backslash p_1(I_k) \cup \interval{m+1}{m+n}\backslash p_2(I_k) \cup \interval{1}{k-n}.
\end{aligned}\end{equation}
Since $v$ is a restricted permutation and $k \geq n+1$, we must have $\interval{1}{k-n} \subseteq p_1(I_k)$ and hence $\big(\interval{1}{m} \backslash p_1(I_k)\big) \cap \interval{1}{k-n}= \emptyset$.  It therefore follows from \eqref{eq:computing minor for k >= n+1} that $\{x_1,\dots,x_t\} = p_1(I_k) \backslash \interval{1}{k-n}$, and clearly $\interval{m+1}{m+n} \backslash p_2(I_k) = \{y_1,\dots, y_t\}$.  The result now follows as above.
\end{proof}
This also lets us express the Grassmann necklace of a cell in terms of the restricted permutation.
\begin{theorem}\label{res:gn in terms of permutation}
Let $C$ be a rectangular $m\times n$ Cauchon diagram, and $v$ its associated restricted permutation.  Then the Grassmann necklace of $C$ is $\II_C= (I_1, \dots, I_{m+n})$, where 
\[I_{m+n-k+1} = \left\{\begin{array}{cl} \Big(\interval{1}{m}\cup \interval{m+n-k+1}{m+n}\Big) \big\backslash w_m^ow_{m,n}^o v \interval{1}{k}, &  k \in \interval{1}{n}; \\
\Big(\interval{1}{m+n} \backslash w_m^ow_{m,n}^o v\interval{1}{k} \Big) \cup \interval{m+n-k+1}{m}, & k \in \interval{n+1}{m+n}.
\end{array}\right.\]
\end{theorem}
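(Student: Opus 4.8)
The plan is to combine Theorem~\ref{res:yakimov's minors come from the chains rooted at squares}, which already records the quantum minor attached to the chain rooted at box $k$, with the dictionary \eqref{eq:translate minors and plucker coordinates} between quantum minors and Pl\"ucker coordinates, and then to push everything through a purely set-theoretic simplification. The only facts about Weyl-group elements I would use are that $w_m^o$ and $w_{m,n}^o$ act as the order-reversing involutions on $\interval{1}{m}$ and on $\interval{m+1}{m+n}$ respectively (and trivially on the complementary block), so that their product $\omega := w_m^o w_{m,n}^o$ preserves each of $\interval{1}{m}$ and $\interval{m+1}{m+n}$ setwise and restricts to the reversal on each.

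First I would reduce the statement to one about chains. By Proposition~\ref{res:oh grassmann necklace computation} together with the observation immediately after Definition~\ref{def:chain rooted at}, for $k \in \interval{1}{m+n-1}$ the Grassmann necklace term $I_{m+n-k+1}$ is precisely the Pl\"ucker coordinate of the quantum minor defined by the chain rooted at box $k$, while $I_1 = \interval{1}{m}$; and for $k = m+n$ the claimed formula also evaluates to $\interval{1}{m}$ (there $\omega v\interval{1}{m+n} = \interval{1}{m+n}$ and $\interval{m+n-k+1}{m} = \interval{1}{m}$), so it suffices to treat $k \in \interval{1}{m+n-1}$. To avoid a clash with the necklace terms, set $P_k := v\interval{1}{k}$. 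Theorem~\ref{res:yakimov's minors come from the chains rooted at squares} then tells me this minor is $\big[ w_m^o(p_1(P_k)) \,\big|\, (\interval{m+1}{m+k}\backslash p_2(P_k)) - m \big]_q$ when $k \le n$, and $\big[ w_m^o(p_1(P_k)\backslash\interval{1}{k-n}) \,\big|\, (\interval{m+1}{m+n}\backslash p_2(P_k)) - m \big]_q$ when $k \ge n+1$.

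Next I would translate and simplify. Running each minor through \eqref{eq:translate minors and plucker coordinates} writes $I_{m+n-k+1}$ as a set of the shape $(\interval{1}{m}\backslash S)\cup(w_n^o(T)+m)$. Using $\interval{1}{m}\backslash w_m^o(X) = \omega(\interval{1}{m}\backslash X)$ for $X \subseteq \interval{1}{m}$, the identity $w_n^o(Y-m)+m = \omega(Y)$ for $Y \subseteq \interval{m+1}{m+n}$, and the trivial equalities $\interval{1}{m}\backslash p_1(P_k) = \interval{1}{m}\backslash P_k$ and $\interval{m+1}{m+j}\backslash p_2(P_k) = \interval{m+1}{m+j}\backslash P_k$, the $k \le n$ case should collapse to
\[I_{m+n-k+1} = \omega\bigl((\interval{1}{m}\backslash P_k) \cup (\interval{m+1}{m+k}\backslash P_k)\bigr) = \omega\bigl(\interval{1}{m+k}\backslash P_k\bigr) = \bigl(\interval{1}{m}\cup\interval{m+n-k+1}{m+n}\bigr)\backslash \omega P_k,\]
the last equality using $\omega(\interval{1}{m+k}) = \interval{1}{m}\cup\interval{m+n-k+1}{m+n}$. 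For $k \ge n+1$ the only extra ingredient is the set identity $\interval{1}{m}\backslash(p_1(P_k)\backslash\interval{1}{k-n}) = (\interval{1}{m}\backslash P_k)\cup\interval{1}{k-n}$ (valid since $k\le m+n-1$ gives $\interval{1}{k-n}\subseteq\interval{1}{m}$); the same manipulation, together with $\omega(\interval{1}{k-n}) = \interval{m+n-k+1}{m}$, then yields
\[I_{m+n-k+1} = \bigl(\interval{1}{m+n}\backslash\omega P_k\bigr)\cup\interval{m+n-k+1}{m}.\]
Since $\omega P_k = w_m^o w_{m,n}^o v\interval{1}{k}$, these are exactly the two asserted formulas.

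I expect no genuine obstacle, only bookkeeping: the two things that need care are correctly converting between the standard row/column numbering and the pipedream numbering of Figure~\ref{fig:pipedreamnumbering} when passing from a chain to its quantum minor and on to a Pl\"ucker coordinate (Figure~\ref{fig:translate minors pipedreams}), and verifying each small set identity above on the appropriate half of $\interval{1}{m+n}$. As an alternative I could bypass Theorem~\ref{res:yakimov's minors come from the chains rooted at squares} and instead feed Corollary~\ref{res:arbitrary rectangle cauchon diagram, chain rooted at to permutation} into the translated form of Proposition~\ref{res:oh grassmann necklace computation}; the computation would be the same, and I would choose whichever route needs the least reindexing.
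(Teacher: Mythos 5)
Your proposal is correct and follows essentially the same route as the paper: feed the quantum minors of Theorem~\ref{res:yakimov's minors come from the chains rooted at squares} through the dictionary \eqref{eq:translate minors and plucker coordinates} and simplify. The only difference is that you spell out the set-theoretic simplifications that the paper leaves as ``clearly agree,'' and you explicitly check the $k=m+n$ case (corresponding to $I_1=\interval{1}{m}$), which is not covered by Theorem~\ref{res:yakimov's minors come from the chains rooted at squares} but which the paper's proof silently absorbs.
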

\begin{proof}
Recall from \eqref{eq:translate minors and plucker coordinates} that the translation from quantum minors to Pl\"ucker coordinates is given by 
\[[S|T]_q \mapsto (\interval{1}{m} \backslash S) \cup (w_n^o(T) + m).\]
Applying this to the formulas in Theorem~\ref{res:yakimov's minors come from the chains rooted at squares}, we get
\[\interval{1}{m} \backslash w_m^o\big(p_1(I_k)\big) \cup  w_{m,n}^o\big(\interval{m+1}{m+k} \backslash p_2(I_k)\big),\]
when $k \in \interval{1}{n}$, and
\[\interval{1}{m}\backslash w_m^o\big(p_1(I_k)\big) \cup \interval{m+n-k+1}{m} \cup w_{m,n}^o\big(\interval{m+1}{m+n} \backslash p_2(I_k)\big),\]
when $k \in \interval{n+1}{m+n}$.  After simplifying and writing $I_k = v\interval{1}{k}$, these clearly agree with the sets in the statement of the theorem.
\end{proof}

\section{Application to Special Cases}\label{s:special cases}

Finally, we can make a few elementary observations about what happens in certain special cases: in \S\ref{ss:J=K} we describe what happens when $J = K$, and in \S\ref{ss:ext to gln sln} we extend Theorem~\ref{res:we have constructed ore sets yay} to the quantized coordinate rings of $GL_n$ and $SL_n$.

\subsection{The case where $J = K$}\label{ss:J=K}

Our original motivation was constructing denominator sets $E_{JK}$ for pairs of $\HH$-primes $J \subsetneq K$ in order to study the algebras defined in \eqref{eq:first def of ZJK}; however, since $E_K \cap K = \emptyset$ we can also consider the image of $E_K$ in $\OO_q(M_{m,n})/K$.

In this case, the set $E_{KK}$ (following the notation in Theorem~\ref{res:we have constructed ore sets yay} with $J = K$) is a finitely generated denominator set of $\HH$-eigenvectors which satisfies the following condition:
\begin{itemize}\item For all $L \in \HH$-$spec(\OO_q(M_{m,n}))$ with $K \subsetneq L$, $E_{KK} \cap (L/K) \neq \emptyset$.\end{itemize}
In other words, the localization $A_K:=(\OO_q(M_{m,n})/K)[E_{KK}^{-1}]$ is $\HH$-simple: it has no non-trivial $\HH$-primes.  The centre $Z_{KK} = Z(A_K)$ is therefore exactly the commutative Laurent polynomial ring appearing in the statement of the Stratification Theorem \cite[II.2.13]{BGbook}, so we can also use the results of Theorem~\ref{res:we have constructed ore sets yay} as a starting point for studying the topological structure of the individual strata $spec_K(\OO_q(M_{m,n}))$.

Other methods for constructing denominator sets leading to $\HH$-simple localizations already exist, e.g. Launois-Lenagan \cite{LL1} and Yakimov \cite{Yakimov2}.  Our sets are smaller in general than those of Launois and Lenagan; however, their aim was to construct not just $\HH$-simple localizations but ones with an especially nice structure, which required a larger denominator set.  By Theorem~\ref{res:yakimov's minors come from the chains rooted at squares}, our denominator sets $E_{KK}$ recover exactly the denominator sets of \cite[Theorem~3.1]{Yakimov2} in the case $R_q[G] = \OO_q(M_{m,n})$.

\subsection{Extension to $\OO_q(GL_n)$ and $\OO_q(SL_n)$}\label{ss:ext to gln sln}

Throughout this section, we set $m = n \geq 2$.

It is well known that the \textit{quantum determinant}, i.e. the $n\times n$ quantum minor $D_q := [1, \dots, n| 1, \dots, n]_q$, is central in $\OO_q(M_{n,n})$.  We can then define the quantized coordinate rings of $GL_n$ and $SL_n$ as follows:
\[\OO_q(GL_n):= \OO_q(M_{n,n})[D_q^{-1}], \qquad \OO_q(SL_n):= \OO_q(M_{n,n})/(D_q-1).\]
The action of $\HH = (\KK^{\times})^{2n}$ in \eqref{eq:H action def} extends naturally to a rational $\HH$-action on $\OO_q(GL_n)$.  For $\OO_q(SL_n)$, we instead need to use the smaller torus
\begin{equation}\label{eq:def of HH',action on SLn}\HH' := \{(\alpha_1,\dots, \alpha_n,\beta_1,\dots, \beta_n) \in (\KK^{\times})^{2n} : \alpha_1\alpha_2\dots\beta_n = 1\} \subset \HH.\end{equation}
The definition in \eqref{eq:H action def} induces a rational action of $\HH'$ on $\OO_q(SL_n)$ \cite[II.1.16]{BGbook}.

Since $\OO_q(GL_n)$ is just a localization of $\OO_q(M_{n,n})$, it follows from standard localization theory that the $\HH$-primes of $\OO_q(GL_n)$ are exactly
\begin{equation}\label{eq:h primes in gln}\HH\text{-}spec(\OO_q(GL_n)) = \{J[D_q^{-1}]: J \in \HH\text{-}spec(\OO_q(M_{n,n})),\ D_q \not\in J\}.\end{equation}
The following result now follows easily from Theorem~\ref{res:we have constructed ore sets yay}.
\begin{corollary}\label{res:gln}
Let $K \in \HH$-$spec(\OO_q(GL_n))$ with $q \in \KK^{\times}$ not a root of unity, and identify $K$ with the corresponding $\HH$-prime in $\OO_q(M_{n,n})$.  Then the set $E_K$ from Theorem~\ref{res:GN constructs the Ore set we need} lifted to $\OO_q(GL_n)$ is also a separating Ore set for $K$ in $\OO_q(GL_n)$.
\end{corollary}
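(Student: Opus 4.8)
The plan is to verify the three defining conditions of a separating Ore set directly, transporting each from the quantum matrices statement through the central localization $\OO_q(GL_n) = \OO_q(M_{n,n})[D_q^{-1}]$. First I would record a bookkeeping point: the generators of $E_K$ are quantum minors of $\OO_q(M_{n,n})$, and the structure map $\OO_q(M_{n,n}) \to \OO_q(GL_n)$ is injective (it is the localization of the domain $\OO_q(M_{n,n})$ at the regular central element $D_q$). Hence $E_K$ lifts to a multiplicatively closed subset of $\OO_q(GL_n)$ generated by the same finite family of quantum minors, and it remains only to check that this lifted set is an Ore set that avoids $K$ and meets every other $\HH$-prime of $\OO_q(GL_n)$.

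For the Ore condition I would use that localizations at central Ore sets compose. Since $D_q$ remains central and nonzero in $\OO_q(M_{n,n})[E_K^{-1}]$, there are identifications $\OO_q(GL_n)[E_K^{-1}] \cong \OO_q(M_{n,n})[E_K^{-1}][D_q^{-1}] \cong \OO_q(M_{n,n})\big[(\{D_q^i : i \geq 0\}\cdot E_K)^{-1}\big]$, so the image of $E_K$ in $\OO_q(GL_n)$ is an Ore set; since $\OO_q(GL_n)$ is Noetherian, it is a denominator set by \cite[Proposition~10.7]{GW1}. Equivalently, one simply observes that the image of a denominator set under a ring homomorphism into a further localization is again a denominator set.

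The remaining two conditions follow from the description of the $\HH$-spectrum in \eqref{eq:h primes in gln}: the $\HH$-primes of $\OO_q(GL_n)$ are exactly the ideals $L[D_q^{-1}]$ with $L$ an $\HH$-prime of $\OO_q(M_{n,n})$ not containing $D_q$, and $L \mapsto L[D_q^{-1}]$ is an inclusion-preserving, inclusion-reflecting bijection with inverse given by contraction along $\OO_q(M_{n,n}) \hookrightarrow \OO_q(GL_n)$. Write $K'$ for $K$ viewed inside $\OO_q(GL_n)$, so that $K' \cap \OO_q(M_{n,n})$ is the original $\HH$-prime $K$. Since every element of $E_K$ lies in $\OO_q(M_{n,n})$, we get $E_K \cap K' = E_K \cap (K' \cap \OO_q(M_{n,n})) = E_K \cap K = \emptyset$ by Theorem~\ref{res:GN constructs the Ore set we need}. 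Finally, if $L'$ is an $\HH$-prime of $\OO_q(GL_n)$ with $L' \not\subseteq K'$, write $L' = L[D_q^{-1}]$; then $L \not\subseteq K$ in $\OO_q(M_{n,n})$, so Theorem~\ref{res:GN constructs the Ore set we need} provides some $e \in E_K \cap L$, whence $e \in L \subseteq L'$ shows $E_K \cap L' \neq \emptyset$.

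There is no genuine obstacle here; the only points needing care are the bookkeeping for the $\HH$-prime correspondence under the central localization (that it is a bijection onto the $\HH$-primes avoiding $D_q$ and respects inclusions in both directions) and the standard fact that Ore localization at the central set $\{D_q^i\}$ commutes with localization at $E_K$. One should also keep track of the standing hypothesis $D_q \notin K$, which is built into $K \in \HH$-$spec(\OO_q(GL_n))$ and is precisely what lets us identify $K$ with an $\HH$-prime of $\OO_q(M_{n,n})$.
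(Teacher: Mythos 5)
Your proof is correct and establishes the result, but it takes a slightly longer route than the paper. The paper's own proof is just two sentences: it observes that an $\HH$-prime $K$ of $\OO_q(GL_n)$ has a Cauchon diagram with an all-white main diagonal, and hence (since the chain rooted at the bottom-right box of such a diagram is exactly the main diagonal) the quantum determinant $D_q$ is itself one of the Grassmann necklace minors, so $D_q \in E_K$. With that observation in hand, the Ore condition becomes immediate: $\OO_q(M_{n,n})[E_K^{-1}]$ already inverts $D_q$, so it is automatically a localization of $\OO_q(GL_n)$ at the image of $E_K$, with no need to pass through the auxiliary multiplicative set $\{D_q^i\} \cdot E_K$. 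Your argument never uses $D_q \in E_K$; you instead invoke the general principle that the image of a denominator set survives a further central localization. That works here because $D_q$ is central and $\{D_q^i\}\cdot E_K$ is again generated by quantum minors (hence Ore by \v{S}koda), but it slightly obscures what is special about $GL_n$, and your phrase ``$\OO_q(GL_n)[E_K^{-1}]$'' is notationally circular until the Ore condition has actually been verified. Your treatment of the two intersection conditions via the inclusion-preserving bijection $L \mapsto L[D_q^{-1}]$ on $\HH$-primes not containing $D_q$ is exactly the right level of detail and matches what the paper leaves implicit in ``the result follows.'' In short: correct, essentially the same strategy, but you could streamline by noting $D_q \in E_K$ up front, which is the one nontrivial observation specific to this corollary.
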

\begin{proof}
The Cauchon diagrams of $\HH$-primes in $\OO_q(GL_n)$ are characterised as exactly those $n \times n$ Cauchon diagrams with no black squares on the main diagonal.  Therefore $D_q \in E_K$ and the result follows.
\end{proof}
\begin{remark}
In \cite[Example~4.3]{BGconjecture}, Brown and Goodearl constructed denominator sets $E_{JK}$ for all pairs of $\HH$-primes $J \subseteq K$ in the specific case of $\OO_q(GL_2)$.  Their sets are slightly smaller than those predicted by Corollary~\ref{res:gln}: for example, when $J = 0$ and $K = \langle b \rangle$, \cite[Example~4.3]{BGconjecture} lists the generating set $\{c,D_q\}$ while the corresponding set from Corollary~\ref{res:gln} is $\{a,c,D_q\}$.  (Here we are using the standard notation $a,b,c,d$ for the generators of $\OO_q(GL_2)$.)

If minimal generating sets are required when computing examples, this redundancy in the sets from Corollary~\ref{res:gln} can be eliminated by removing any minors whose chain begins at a square on the main diagonal of the Cauchon diagram, i.e. such that $(x_1,y_1) = (i,2n-i+1)$ for some $i \in \interval{1}{n}$.
\end{remark}

For $\OO_q(SL_n)$ we require one extra step, since we need to work with the action of $\HH'$ from \eqref{eq:def of HH',action on SLn} rather than the original $\HH$.  The following result from \cite{GL1} provides the necessary translation between the two.
\begin{proposition}\label{res:map h primes in gln to sln} \cite[Proposition~2.5]{GL1} 
Let $\pi: \OO_q(M_{n,n}) \longrightarrow \OO_q(SL_n)$ denote the natural quotient map, and extend this to a map $\OO_q(GL_n) \longrightarrow \OO_q(SL_n)$ also denoted by $\pi$.  Then the map of sets 
\[\HH\text{-}spec(\OO_q(GL_n)) \longrightarrow \HH'\text{-}spec(\OO_q(SL_n)) : P \mapsto \pi(P),\]
is a bijection.
\end{proposition}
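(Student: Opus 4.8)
The plan is to exhibit $\OO_q(GL_n)$ as a Laurent polynomial ring over $\OO_q(SL_n)$ in the central variable $D_q$, in a way compatible with the torus actions, and then to deduce the bijection from standard facts about graded rings.  The point to bear in mind is that $D_q-1$, though central, is \emph{not} an $\HH$-eigenvector: if an $\HH$-prime $P$ of $\OO_q(GL_n)$ contained $D_q-1$, it would contain $h\cdot(D_q-1)=\chi(h)D_q-1$ for every $h=(\alpha_1,\dots,\beta_n)\in\HH$, where $\chi(h)=\alpha_1\cdots\alpha_n\beta_1\cdots\beta_n$; choosing $h$ with $\chi(h)\neq 1$ (which exists since $\KK$ is infinite) and subtracting would force the unit $D_q$ into $P$, a contradiction.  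So no $\HH$-prime of $\OO_q(GL_n)$ contains $\ker\pi$, and the usual correspondence of primes under a central quotient cannot be applied directly: the torus action must be used to rigidify the situation.

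First I would grade $\OO_q(GL_n)$ over $\mathbb{Z}$ using the one-parameter subgroup $\KK^{\times}\hookrightarrow\HH$, $\lambda\mapsto(\lambda,1,\dots,1)$.  This makes $X_{1j}$ homogeneous of degree $1$ and $X_{ij}$ homogeneous of degree $0$ for $i\geq 2$, hence $D_q$ homogeneous of degree $1$; localising at $D_q$ then yields a $\mathbb{Z}$-grading on $R:=\OO_q(GL_n)$.  Since $D_q$ is a central unit lying in degree $1$, multiplication by $D_q$ identifies the graded pieces, so $R_k=R_0D_q^{k}$ for all $k$ and $R=R_0[D_q^{\pm 1}]$.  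I would then check that $\pi$ restricts to an isomorphism $R_0\xrightarrow{\ \sim\ }\OO_q(SL_n)$: surjective because $\pi(D_q)=1$, and injective by comparing graded components of an expression $(D_q-1)\sum_k s_kD_q^{k}$.  Finally, $\pi$ is $\HH'$-equivariant on generators, so its restriction $\pi|_{R_0}$ is an $\HH'$-equivariant isomorphism; and since $\HH=\HH'\times\KK^{\times}$ for the grading subgroup (which acts trivially on $R_0$), the restriction of the $\HH$-action to $R_0$ is exactly the $\HH'$-action on $\OO_q(SL_n)$ transported across this isomorphism.

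With this structure in place the bijection is routine.  Every $\HH$-prime $P$ of $R$ is stable under the grading subgroup, hence a graded ideal, so $P=(P\cap R_0)[D_q^{\pm 1}]$; and a graded ideal of $R_0[D_q^{\pm1}]$ of this form is prime if and only if $P\cap R_0$ is a prime ideal of $R_0$ (both directions by a routine leading-coefficient argument, since $D_q$ is central).  As $P$ and $R_0$ are $\HH$-stable, $P\cap R_0$ is an $\HH'$-prime of $R_0$, that is, of $\OO_q(SL_n)$ via the isomorphism above; and $\pi(P)=\pi(P\cap R_0)$ because $\pi(D_q)=1$.  Conversely, an $\HH'$-prime of $\OO_q(SL_n)$ transported to a prime $Q_0$ of $R_0$ is automatically $\HH$-stable, and $Q_0[D_q^{\pm1}]$ is then an $\HH$-prime of $R$ with image $Q_0$ under $\pi$.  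These two operations are mutually inverse, so $P\mapsto\pi(P)$ is a bijection from $\HH$-$spec(\OO_q(GL_n))$ onto $\HH'$-$spec(\OO_q(SL_n))$, as claimed.

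I expect the technical heart to lie in the second paragraph: confirming that $D_q$ is simultaneously central, invertible, and homogeneous of degree $1$ for a grading refining the $\HH$-action, and that $\pi$ restricts to an isomorphism on the degree-zero part.  Once the graded decomposition $\OO_q(GL_n)\cong\OO_q(SL_n)[D_q^{\pm 1}]$ is in hand, the remainder is bookkeeping with graded ideals.  The facts it relies on — centrality of $D_q$ in $\OO_q(M_{n,n})$ and the evident $\mathbb{N}$-grading of $\OO_q(M_{n,n})$ — are classical.
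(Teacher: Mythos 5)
Your proposal is correct. The paper itself offers no proof of this proposition, citing \cite[Proposition~2.5]{GL1} instead, and the cited argument rests on exactly the decomposition you construct — the Levasseur--Stafford identification $\OO_q(GL_n)\cong\OO_q(SL_n)[D_q^{\pm 1}]$ with $D_q$ central, followed by the graded-ideal correspondence — so your write-up essentially reconstructs the standard argument, with the added (and sound) touch of deriving the Laurent decomposition directly from the $\mathbb{Z}$-grading given by the one-parameter subgroup $\lambda\mapsto(\lambda,1,\dots,1)$ rather than quoting it.
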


The result for $\OO_q(SL_n)$ now follows immediately from Corollary~\ref{res:gln} and Proposition~\ref{res:map h primes in gln to sln}.
\begin{corollary}\label{res:sln}
Let $K \in \HH'$-$spec(\OO_q(SL_n))$ with $q \in \KK^{\times}$ not a root of unity, and let $E_K$ be the separating Ore set of the corresponding $\HH$-prime in $\OO_q(GL_n)$.  Then $\overline{E}_K := \pi(E_K)$ is a separating Ore set for $K$ in $\OO_q(SL_n)$.
\end{corollary}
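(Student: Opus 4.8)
The plan is to transport the two defining properties of a separating Ore set across the quotient map $\pi\colon\OO_q(GL_n)\twoheadrightarrow\OO_q(SL_n)$, using Corollary~\ref{res:gln} on the $GL_n$ side and Proposition~\ref{res:map h primes in gln to sln} to match up the relevant $\HH$-primes. Write $\widetilde K\in\HH$-$spec(\OO_q(GL_n))$ for the $\HH$-prime with $\pi(\widetilde K)=K$ supplied by Proposition~\ref{res:map h primes in gln to sln}, so that $\overline E_K=\pi(E_{\widetilde K})$ with $E_{\widetilde K}$ the separating Ore set of Corollary~\ref{res:gln}. Since $\pi$ is surjective with $\ker\pi=(D_q-1)\OO_q(GL_n)$ generated by the central element $D_q-1$, the image of a denominator set under $\pi$ is again a denominator set: concretely $\OO_q(SL_n)[\overline E_K^{-1}]\cong\OO_q(GL_n)[E_{\widetilde K}^{-1}]/(D_q-1)$ by the universal property of localization, so $\overline E_K$ is a denominator set in $\OO_q(SL_n)$ (cf.\ \cite[Chapter~10]{GW1}). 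It then remains to check the two conditions from \S\ref{ss: restatement of problem}: $\overline E_K\cap K=\emptyset$, and $\overline E_K\cap L\neq\emptyset$ for every $\HH'$-prime $L\not\subseteq K$.

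For the disjointness $\overline E_K\cap K=\emptyset$, I would argue that a nonzero $\HH$-eigenvector of $B:=\OO_q(GL_n)/\widetilde K$ cannot lie in the ideal $(\bar D_q-1)B$. Here $B$ is a domain, since $\widetilde K$ is completely prime as $q$ is not a root of unity (\cite[Theorem~II.5.14]{BGbook}), and $B$ inherits a $\mathbb{Z}^{2n}$-grading from the $\HH$-action; in this grading $\bar D_q$ is a central \emph{unit} which is homogeneous of nonzero degree (the quantum determinant has $\HH$-weight $(1,\dots,1)$). A finite-support argument then shows that $(\bar D_q-1)b$ is homogeneous only if $b=0$: writing $b=\sum_\lambda b_\lambda$ with finitely many nonzero homogeneous components, homogeneity of $(\bar D_q-1)b$ off a single degree $\nu$ forces $b_\theta=\bar D_q\, b_{\theta-\mu}$ for all $\theta\neq\nu$, and unitality of $\bar D_q$ together with finiteness of the support propagates this up and down each $\mu$-line to give $b=0$. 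Since $\OO_q(SL_n)/K\cong B/(\bar D_q-1)B$, and every element of $E_{\widetilde K}$ is an $\HH$-eigenvector lying outside $\widetilde K$ (so its image in $B$ is a nonzero homogeneous element), this yields $\pi(E_{\widetilde K})\cap K=\emptyset$; in particular $0\notin\overline E_K$.

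For the separating condition, let $L\in\HH'$-$spec(\OO_q(SL_n))$ with $L\not\subseteq K$, and let $\widetilde L\in\HH$-$spec(\OO_q(GL_n))$ be its preimage under the bijection of Proposition~\ref{res:map h primes in gln to sln}. Then $\widetilde L\not\subseteq\widetilde K$, since otherwise $L=\pi(\widetilde L)\subseteq\pi(\widetilde K)=K$. By Corollary~\ref{res:gln} there is some $u\in E_{\widetilde K}\cap\widetilde L$, and then $\pi(u)\in\pi(\widetilde L)=L$ while $\pi(u)\in\pi(E_{\widetilde K})=\overline E_K$, so $\overline E_K\cap L\neq\emptyset$, as required.

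The routine parts — that images of denominator sets under surjections are denominator sets, and the transport of the separating condition through Proposition~\ref{res:map h primes in gln to sln} — are essentially formal. The one step that requires an actual argument, and hence the main obstacle, is the disjointness $\overline E_K\cap K=\emptyset$: one must rule out the possibility that an element of $E_{\widetilde K}$, though outside $\widetilde K$, becomes congruent to an element of $\widetilde K$ modulo $D_q-1$. This is exactly where the weight-grading of $\OO_q(GL_n)/\widetilde K$ and the nontriviality of the $\HH$-weight of $D_q$ are used; as an alternative one could instead invoke a description of which quantum minors belong to a given $\HH'$-prime of $\OO_q(SL_n)$ (matching the $\HH$-prime picture in $\OO_q(GL_n)$), but the grading argument has the advantage of being self-contained.
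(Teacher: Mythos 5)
Your argument is correct, and it is considerably more detailed than what the paper offers: the paper simply asserts that Corollary~\ref{res:sln} ``follows immediately from Corollary~\ref{res:gln} and Proposition~\ref{res:map h primes in gln to sln},'' with no proof given. You are right that the one nontrivial point is the disjointness $\overline{E}_K\cap K=\emptyset$, since $\pi^{-1}(K)=\widetilde K+(D_q-1)$ properly contains $\widetilde K$, and your weight-grading argument (using that $\bar D_q$ is a homogeneous central unit of nonzero $\HH$-weight in the graded domain $B=\OO_q(GL_n)/\widetilde K$) does establish it, though the ``propagates up and down each $\mu$-line'' step should be spelled out more carefully: one first shows all components $b_\lambda$ off the $\mu$-line through $\nu$ vanish, then kills the components on that line above and below $\nu$, and finally kills $b_\nu$ itself by reading the degree-$(\nu+\mu)$ component $\bar D_q b_\nu - b_{\nu+\mu}=0$. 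The treatment of the separating condition via the bijection of Proposition~\ref{res:map h primes in gln to sln} is the same as the paper's (implicit) argument.

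One remark on efficiency: the disjointness can also be seen without an explicit grading computation. Since $\pi^{-1}(K)$ is a prime of $\OO_q(GL_n)$ containing $\widetilde K$, its largest $\HH$-stable ideal $J$ is an $\HH$-prime with $\widetilde K\subseteq J\subseteq\pi^{-1}(K)$, hence $\pi(\widetilde K)\subseteq\pi(J)\subseteq K=\pi(\widetilde K)$, so $\pi(J)=\pi(\widetilde K)$ and injectivity of the bijection forces $J=\widetilde K$. But any $\HH$-eigenvector lying in a prime $P$ lies in the largest $\HH$-stable ideal of $P$ (apply $h^{-1}$ to it and use that it is scaled), so every $\HH$-eigenvector of $\pi^{-1}(K)$ already lies in $\widetilde K$; since $E_{\widetilde K}$ consists of $\HH$-eigenvectors avoiding $\widetilde K$, it avoids $\pi^{-1}(K)$. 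This packages the same idea as your grading argument but leans more directly on the bijection the paper cites, which is presumably why the authors regarded the step as immediate; your version has the virtue of being self-contained.
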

In \cite{Fryer}, the second author constructed denominator sets $E_{JK}$ for all pairs of $\HH'$-primes $J \subsetneq K$ in $\OO_q(SL_3)$ in order to describe the topological structure of $spec(\OO_q(SL_3))$.  These were constructed using lacunary sequences from white squares rather than chains rooted at squares along the southeast border of the Cauchon diagram, so the sets in \cite[Table 3]{Fryer} are different from those constructed in Corollary~\ref{res:sln}.  As Example~\ref{ex:lacunary sequences fail in sl4} indicates, we expect that Corollary~\ref{res:sln} will be the better method for generalising the results of \cite{Fryer} to larger algebras.

\subsection*{Acknowledgements}The first author was supported by a Marie Curie Visiting Research Fellowship which was held at the University of Kent. The second author would like to thank the University of Kent for their hospitality in summer 2015, where the initial work for this paper was carried out.  We also thank St\'ephane Launois, Tom Lenagan, Robert Marsh, and Milen Yakimov for many helpful discussions and comments. 

\bibliographystyle{plain}

\end{document}